\newtheorem{theorem}{Theorem}[section]
\newtheorem{proposition}[theorem]{Proposition}
\newtheorem{corollary}[theorem]{Corollary}
\newtheorem{lemma}[theorem]{Lemma}
{\theoremstyle{definition}
\newtheorem{definition}[theorem]{Definition}

\newtheorem{example}[theorem]{Example}

}
\newcommand{\Z}{\mathbb{Z}}
\newcommand{\R}{\mathbb{R}}
\newcommand{\C}{\mathbb{C}}
\begin{document}

\title[Topological equivalence of foliations and functions]{Topological equivalence of submersion functions and topological equivalence of their foliations on the plane: the linear-like case}

\author[F. Braun \MakeLowercase{and} I.S. Meza-Sarmiento]
{Francisco Braun \MakeLowercase{and} Ingrid S. Meza-Sarmiento}

\address{Departamento de Matem\'{a}tica, Universidade Federal de S\~ao Carlos, 
13565--905 S\~ao Carlos, S\~ao Paulo, Brazil}
\email{franciscobraun@ufscar.br}

\email{isofia1015@gmail.com}

\subjclass[2010]{Primary: 37C10, 57R30, 58K65; Secondary: 34C99.}

\keywords{Regular foliation, topological classification, submersions}

\thanks{This paper is in final form and no version of it will be submitted for publication elsewhere.}

\date{\today}

\maketitle

\begin{abstract}
Let $f, g: \R^2 \to \R$ be two submersion functions and $\mathscr{F}(f)$ and $\mathscr{F}(g)$ be the regular foliations of $\R^2$ whose leaves are the connected components of the levels sets of $f$ and $g$, respectively. 
The topological equivalence of $f$ and $g$ implies the topological equivalence of $\mathscr{F}(f)$ and $\mathscr{F}(g)$, but the converse is not true, in general. 
In this paper, we introduce the class of linear-like submersion functions, which is wide enough in order to contain non-trivial behaviors, and provide conditions for the validity of the converse implication for functions inside this class. 
Our results lead us to a complete topological invariant for topological equivalence in a certain subclass of linear-like submersion functions. 
\end{abstract}

\section{Introduction} 
We say that two functions $f, g: \R^2 \to \R$ are \emph{topologically equivalent} (resp. o-\emph{topologically equivalent}) if there exist homeomorphisms (resp. orientation preserving homeomorphisms) $h: \R^2 \to \R^2$ and $\ell :\R\to\R$ such that 
$$
\ell \circ f = g \circ h. 
$$ 
On the other hand,  if $\mathscr{F}$ and $\mathscr{G}$ are regular foliations of open subsets $A$ and $B$ of $\R^2$, respectively, we say they are \emph{topologically equivalent} (resp. o-\emph{topologically equivalent}) if there exists a homeomorphism (resp. orientation preserving homeomorphisms) $h:A\to B$ carrying each leaf of $\mathscr{F}$ onto a leaf of $\mathscr{G}$. 
In this case we call $h$ an \emph{equivalence homeomorphism} (resp. \emph{o-equivalence homeomorphism}) between $\mathscr{F}$ and $\mathscr{G}$. 

Let $f: \R^2 \to \R$ be a $C^{\infty}$ submersion function. 
It is well known that $f$ induces a regular foliation of $\R^2$ whose leaves are the connected components of $f$. 
We denote this foliation by $\mathscr{F}(f)$.

If $f$ and $g$ are topologically equivalent $C^{\infty}$ submersion functions of $\R^2$, the homeomorphism $h$ of the definition carries level sets of $f$ onto level sets of $g$, preserving the connected components. 
Hence $h$ is an equivalence homeomorphism proving that $\mathscr{F}(f)$ and $\mathscr{F}(g)$ are topologically equivalent. 
Clearly this implication is also true for o-topologically equivalences. 
But the converse is not true, even in the polynomial case. 
Indeed, take 
\begin{equation}\label{ex122}
\begin{aligned}
p(x,y) & = x (7 x - 5)  + (x+1)^2 x^2 (x-2)^2 y, \\ 
q(x,y) & = 2 x (4 x - 5) + (x+1)^2 x^2 (x-2)^2 y. 
\end{aligned}
\end{equation}
In Example \ref{degree2} below we prove that $\mathscr{F}(p)$ and $\mathscr{F}(q)$ are o-topologically equivalent although $p$ and $q$ are not topologically equivalent. 
The main objective of this paper is to study this converse, i.e. to provide conditions in order that the topological equivalence of $\mathscr{F}(p)$ and $\mathscr{F}(q)$ implies the topological equivalence of $p$ and $q$, for two given submersion functions $p, q: \R^2\to \R$. 
At least for a class of submersions to be defined soon. 

The question of characterizing the topological equivalence of functions has long been studied, in different frameworks. 
In the theory of singularities the local topological equivalence of smooth or analytic functions is a classical subject since the works of M. Morse \cite{morse-46, morse-47} in the topological classification of continuous functions on $2$-dimensional manifolds. 
In the study of integrable systems on the plane, it is possible to relate the topological equivalence of such systems to the topological equivalence of their first integrals. 
For instance, J. Mart\'inez-Alfaro et al. \cite{mmo-13,mmo-15} gave complete topological invariants for the integrable Morse-Bott systems and for their first integrals, the so called Morse-Bott functions. 
In a similar line, K.R. Meyer \cite{Me} associated a function, called $\xi$-function, to each Morse-Smale system (a $\xi$-functions is not a first integrals of the system, as its level sets may be transversal to it), and provided conditions in order to obtain the equivalence between topological equivalence of two Morse-Smale systems and the topological equivalence of the related $\xi$-functions. 

Although the topological equivalence of functions is a very studied subject and there are local topological invariants for its study, global results and complete invariants are not so common in the literature. 

Without singularities and so more close to our subject, V.V. Sharko and Y.Y. Soroka \cite{sharko-15} proved that a continuous function $f:\R^2\to\R$, without critical points, is topologically equivalent to a projection if and only if the level sets of $f$ are connected and non-empty. 
In the $C^{\infty}$ higher dimensional case, by using different tools, M. Tib\u ar and A. Zaharia \cite[Prop. 2.7]{TZ} proved that if $M\subseteq \R^n$ is a $C^{\infty}$ submanifold of dimension $m+1$ and $g:M\to \R^m$ is a $C^{\infty}$ function without critical values having level sets diffeomorphic to $\R$ and closed in $\mathbb R^n$, then $g$ is a $C^{\infty}$ \emph{trivial fibration} (see the definition further in this work), in other words $g$ is topologically equivalent to a projection. 
Moreover, $M$ is diffeomorphic to $\R^{m+1}$. 

The foliation $\mathscr{F}(f)$ is \emph{trivial}, i.e. without separatrices (see the precise definition further in this paper), in Sharko and Soroka's case, in fact the very same result says that connectedness of level sets of $f$ is equivalent to triviality of $\mathscr{F}(f)$. 
So, in particular, when two surjective submersion functions have trivial foliation they are topologically equivalent. 

The level sets of a submersion $f: \R^2 \to \R$ need not be connected or, equivalently, the foliation $\mathscr{F}(f)$ need not be trivial, as the examples in \eqref{ex122} above illustrate, and the right above-mentioned equivalence does not hold. 
So what are the additional hypothesis we have to assume in order to guarantee that two given submersion functions $f,g: \R^2 \to \R$ having topologically equivalent foliations $\mathscr{F}(f)$ and $\mathscr{F}(g)$ are topologically equivalent?  

In this paper we address this question for the class of submersions of the form 
\begin{equation}\label{linearform}
p(x,y) = r(x) + s(x) y, 
\end{equation}
where $r(x)$ and $s(x)$ are $C^{\infty}$ real functions. 
We say that $p$ as in \eqref{linearform} is a \emph{linear-like} function. 
This class of functions is wide enough for containing trivial submersions (when $s(x) \neq 0$ for all $x$) and more interesting examples, as the known S.A. Broughton's function \cite{B} $x + x^2 y$ and yet more complicated ones, for instance our examples above as well as examples presenting limit-separatrices in the associated foliation, see sections \ref{section:linear} and \ref{section:examples}. 
We answer the above question for the subclass of \emph{finite linear-like submersions}, defined to be the linear-like ones where the zero set of $s$ is finite, in the results below. 
In order to state them, we denote by $B(p)$ the bifurcation set of a function $p$, see Definition \ref{definition:tf} below. 
\begin{theorem}\label{theorem:main}
Two finite linear-like submersions $p,\,q:\R^2\to\R$ are o-topologically equivalent if and only if $B(p) = B(q) = \emptyset$ or the following conditions hold:
\begin{enumerate}[label={\textnormal{(\alph*)}}]
\item\label{c1} $\mathscr{F}(p)$ and $\mathscr{F}(q)$ are o-topologically equivalent. 
\item\label{c2} There exists an increasing bijection $\sigma:B(p)\to B(q)$ such that 
$$
p^{-1}(\sigma^{-1}(c)) = h^{-1}(q^{-1}(c)) 
$$ 
for each $c\in B(q)$, where $h: \R^2 \to \R^2$ is an o-equivalence homeomorphism between $\mathscr{F}(p)$ and $\mathscr{F}(q)$. 
If $B(p)$ is a singleton, there further exist $m \notin B(p)$ and an extension of $\sigma$ to an increasing bijection $\overline{\sigma}: B(p) \cup \{m\} \to B(q) \cup \{\overline{\sigma}(m)\}$ such that $p^{-1}(m) = h^{-1}(q^{-1}(\overline{\sigma}(m)))$. 
\end{enumerate}
\end{theorem}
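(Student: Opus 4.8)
The plan is to lean on the explicit geometry of a finite linear-like submersion $p(x,y)=r(x)+s(x)y$. Writing $x_1<\cdots<x_k$ for the finitely many zeros of $s$, over each of the $k+1$ open strips cut out by the vertical lines $x=x_i$ the level set $p^{-1}(c)$ is the single graph $y=(c-r(x))/s(x)$, whereas the line $x=x_i$ lies in $p^{-1}(c)$ exactly when $c=r(x_i)$. Hence the leaves are the $k+1$ graph-pieces, glued to the vertical lines only at the values $r(x_i)$, and these are precisely the elements of $B(p)$: crossing such a value merges two neighbouring graph-pieces into one leaf carrying a vertical line, i.e. a separatrix. So $\R^2$ decomposes into finitely many trivially foliated strip-pieces, glued along the separatrix level sets $p^{-1}(b)$, $b\in B(p)$; this tiling is the backbone of the proof.

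The forward implication is then essentially bookkeeping. From an o-topological equivalence $\ell\circ p=q\circ h$ the homeomorphism $h$ is orientation preserving and carries leaves to leaves, which is exactly \ref{c1}. Since $\ell$ is orientation preserving it is increasing, and as $B(\cdot)$ is a topological invariant $\ell$ restricts to an increasing bijection $\sigma:=\ell|_{B(p)}:B(p)\to B(q)$; rewriting $h(p^{-1}(a))=q^{-1}(\ell(a))$ at $a=\sigma^{-1}(c)$ yields the identity in \ref{c2}. In the singleton case I take any regular value $m\notin B(p)$ and set $\overline\sigma(m)=\ell(m)$, which extends $\sigma$ increasingly and satisfies $p^{-1}(m)=h^{-1}(q^{-1}(\overline\sigma(m)))$ by the same rewriting.

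For the converse, the case $B(p)=B(q)=\emptyset$ is immediate: the level sets are then connected, so by the quoted theorem of Sharko--Soroka both $p$ and $q$ are topologically equivalent to a projection, and the equivalences can be arranged to be orientation preserving with increasing $\ell$, giving an o-topological equivalence. In the main case I assume \ref{c1} and \ref{c2} and must construct an increasing $\ell$ and upgrade $h$ to an o-topological equivalence. Condition \ref{c2} already pins $h$ on the separatrix level sets, carrying $p^{-1}(b)$ onto $q^{-1}(\sigma(b))$ for every $b\in B(p)$. On each strip-piece both restricted foliations are trivial products, so on it I can replace $h$ by a product homeomorphism that sends $p$-levels to $q$-levels monotonically and agrees with $h$ along the bounding separatrices; this simultaneously prescribes $\ell$ on the corresponding range of values. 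Gluing these modified maps along the separatrix level sets produces the desired $\widetilde h$ and a global increasing $\ell$ with $\ell\circ p=q\circ\widetilde h$.

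The heart of the matter, and the step I expect to be hardest, is the coherence of this gluing. A single regular level $p^{-1}(c)$ is split among the strips into $k+1$ leaves, and nothing in \ref{c1} alone prevents $h$ from sending them to leaves of different $q$-levels; I must show that the separatrix matching of \ref{c2}, propagated through the merges that occur at the bifurcation values, forces all $k+1$ per-strip reparametrizations to agree, so that $\ell$ is single valued and increasing across all of $B(p)$, and that the pieces fit together continuously across each $p^{-1}(b)$. Here the increasing bijection $\sigma$ does real work: when $|B(p)|\ge 2$, two ordered separatrices $b_1<b_2$ map to $\sigma(b_1)<\sigma(b_2)$, which pins the transverse co-orientation and rules out an orientation-preserving swap of the two sides. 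When $B(p)$ is a singleton this last mechanism is absent---the unique separatrix can a priori be matched with either co-orientation---and that is exactly why the extra value $m$ is imposed: the condition $p^{-1}(m)=h^{-1}(q^{-1}(\overline\sigma(m)))$ records on which side of the separatrix the regular levels must go, restoring the coherence needed to build $\ell$.
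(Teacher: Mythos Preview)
Your forward direction and the empty-bifurcation case are handled the same way the paper does (the paper even mentions Sharko--Soroka as an alternative to its explicit trivialisation). The substantive divergence is in the main converse.

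The paper does \emph{not} modify $h$ strip-by-strip and glue. Instead it first fixes an increasing homeomorphism $\ell$ extending $\sigma$ (or $\overline\sigma$), and then builds a \emph{leaf bijection} $g:\mathscr{F}(p)\to\mathscr{F}(q)$: on each open strip it sends the unique leaf at $p$-level $t$ to the unique leaf at $q$-level $\ell(t)$, and on separatrices (and on $\xi$) it agrees with $h$. With $\ell$ fixed in advance there is no ``coherence of $\ell$'' problem; the real work is to show that this $g$---which in general differs from $h$ on ordinary leaves---still preserves all chordal relations. The paper proves this via a general lemma (two ordinary leaves in the same canonical region have the same chordal relation against any outside pair), together with the fact that $g(\gamma)$ and $h(\gamma)$ always land in the same canonical region; the latter uses an order-propagation argument (if $p(\gamma_1)<p(\gamma_2)$ then $q(h(\gamma_1))<q(h(\gamma_2))$, pushed from one strip to its neighbour through a small ball across $\nu_{a_j}$). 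Once $g$ is an isomorphism of chordal systems, Kaplan's Theorem~\ref{KK} produces the desired orientation-preserving homeomorphism $H$ with $H|_{\text{leaves}}=g$, and $\ell\circ p=q\circ H$ follows. So the paper trades your point-set gluing problem for a combinatorial verification and then outsources the homeomorphism to Kaplan.

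Your plan has a genuine gap at the gluing step. On an open strip the graph-leaves of $p$ have vertical asymptotes at $x=a_j$; they do not touch $\nu_{a_j}$ at all (so your phrase ``merges two neighbouring graph-pieces into one leaf carrying a vertical line'' is inaccurate: at a bifurcation value an \emph{extra} vertical component appears, nothing merges). Consequently, a product homeomorphism on the open strip does not automatically extend continuously to $\nu_{a_j}$, let alone extend to \emph{agree with $h$} there; saying ``glue along the separatrix level sets'' hides exactly the analysis that is needed. You also skip the preliminary step (Step~1 in the paper) that $h$ must send the ordered vertical separatrices $\nu_{a_1^1},\ldots,\nu_{a_k^1}$ to $\nu_{a_1^2},\ldots,\nu_{a_k^2}$ in the same order; this is not automatic from \ref{c1} alone and requires the level-set matching of \ref{c2} together with the strip classification. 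Your direct-construction strategy can likely be made to work, but as written it is a sketch; the paper's route through chordal isomorphisms and Kaplan's theorem is what actually closes the argument.
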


The appearing of the bifurcation set in Theorem \ref{theorem:main} becomes natural if one is aware that A. Bodin and M. Tib\u ar \cite{BT} proved that two \emph{complex} polynomials $p$ and $q$ with $n$ variables, $n\neq 3$, are topologically equivalent if and only if it is possible to deform $p$ into $q$ by means of a continuous family of polynomials $h_s$, with $h_0=p$ and $h_1=q$, such that $\deg h_s$, the Euler characteristic of the general fibre of $h_s$ and the cardinality of $B(h_s)$ are independent of $s$. 
Our approach is different as we do not use deformations, instead we use the foliations associated to $p$ and $q$ and the relation in statement \ref{c2}. 

Anyway, even in the polynomial linear-like case, only assuming that the general fibers of $p$ and $q$ have the same Euler characteristic and the same bifurcation sets is not sufficient for the topological equivalence of $p$ and $q$: in Example \ref{degree2} we will explain that the polynomials of degree $7$ $p$ and $q$ of \eqref{ex122} have the same bifurcation set and generic fibers with the same Euler characteristic (with $\mathscr{F}(p)$ and $\mathscr{F}(q)$ topologically equivalent), but even so $p$ and $q$ are not topologically equivalent. 

Condition \ref{c2} is equivalent to ask the relation of topologically equivalence of $p$ and $q$, $\ell \circ p = q \circ h$, in the pre-image of $B(p)$ (or $B(p)$ plus a point) by $p$. 
Actually we just need to test this condition in a smaller set of points, according to the more general Theorem \ref{mmain} below, see also Theorem \ref{corollary:main}. 

When $B(p)$ and $B(q)$ are singletons, we need to add points to them in condition \ref{c2} in order to guarantee o-topological equivalence: indeed $p$ and $-p$, where $p(x,y) = x + x^3 y$, are not o-topologically equivalent although $p$ and $-p$ are topologically equivalent and $\mathscr{F}(p)$ and $\mathscr{F}(-p)$ are o-topologically equivalent, see Example \ref{dd45} below. 
If we are just concerned with topological equivalence, then we can rephrase Theorem \ref{theorem:main} as 
\begin{corollary}\label{coro:main}
Two finite linear-like submersions $p,\,q:\R^2\to\R$ are topologically equivalent if and only if $B(p) = B(q) = \emptyset$ or the following conditions hold:
\begin{enumerate}[label={\textnormal{(\alph*)}}]
\item $\mathscr{F}(p)$ and $\mathscr{F}(q)$ are topologically equivalent. 
\item There exists a monotone bijection $\sigma:B(p)\to B(q)$ such that 
$
p^{-1}(\sigma^{-1}(c)) = h^{-1}(q^{-1}(c))
$ 
for each $c\in B(q)$, where $h: \R^2 \to \R^2$ is an equivalence homeomorphism between $\mathscr{F}(p)$ and $\mathscr{F}(q)$. 
\end{enumerate}
\end{corollary}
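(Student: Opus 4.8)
The plan is to deduce Corollary \ref{coro:main} from Theorem \ref{theorem:main} by keeping track of the two extra degrees of freedom that topological equivalence enjoys over o-topological equivalence: the orientation of the homeomorphism of $\R^2$ and the orientation of the homeomorphism of $\R$. Fix the orientation-reversing involution $R(x,y) = (x,-y)$ and set $p^-(x,y) := p(R(x,y)) = r(x) - s(x)y$, which is again a finite linear-like submersion with $B(p^-) = B(p)$ and whose foliation $\mathscr{F}(p^-)$ is carried onto $\mathscr{F}(p)$ by $R$; likewise $-q$ is finite linear-like with $\mathscr{F}(-q) = \mathscr{F}(q)$ and $B(-q) = -B(q)$. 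This gives two basic moves: (i) replacing $p$ by $p^-$, which reverses the orientation of any foliation equivalence $h$ (since $h\circ R$ is o-preserving and still intertwines $\mathscr{F}(p^-)$ with $\mathscr{F}(q)$) while leaving $\sigma$ untouched; and (ii) replacing $q$ by $-q$, which reverses the monotonicity of $\sigma$ while leaving $\mathscr{F}(q)$ and the orientation of $h$ untouched. Both moves preserve topological equivalence classes, as $p$ is topologically equivalent to $p^-$ (via $R$ and $\mathrm{id}_\R$) and $q$ to $-q$ (via $\mathrm{id}_{\R^2}$ and $t\mapsto -t$). The underlying reduction is that $p$ and $q$ are topologically equivalent iff one of the pairs $(p,q)$, $(p^-,q)$ is o-topologically equivalent, and the same dichotomy holds for the foliations. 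A direct computation shows condition \ref{c2} transforms correctly: under move (i) the relation $p^{-1}(\sigma^{-1}(c)) = h^{-1}(q^{-1}(c))$ becomes $(p^-)^{-1}(\sigma^{-1}(c)) = (h\circ R)^{-1}(q^{-1}(c))$ with the same $\sigma$, and under move (ii) it becomes $p^{-1}(\widetilde\sigma^{-1}(c)) = h^{-1}((-q)^{-1}(c))$ with $\widetilde\sigma = -\sigma$, which is increasing exactly when $\sigma$ is decreasing.

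For the forward implication there is nothing delicate. If $p$ and $q$ are topologically equivalent, then by the reduction one of $(p,q)$, $(p^-,q)$ is o-topologically equivalent, and Theorem \ref{theorem:main} applied to that pair yields either $B = \emptyset$ or its conditions \ref{c1}--\ref{c2}. Transporting back along $R$ converts the o-topological equivalence of foliations into topological equivalence of $\mathscr{F}(p)$ and $\mathscr{F}(q)$, and converts the increasing bijection into a monotone one, so conditions (a)--(b) of the corollary hold; the singleton clause of Theorem \ref{theorem:main}\ref{c2} is simply discarded, being stronger than what the corollary asserts.

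The substance lies in the reverse implication. Assuming conditions (a)--(b) (the case $B(p)=B(q)=\emptyset$ being immediate from Theorem \ref{theorem:main}), I would first apply move (i) to arrange, without loss of generality, that the equivalence homeomorphism $h$ is orientation-preserving. When $B(p)$ has at least two points, Theorem \ref{theorem:main}\ref{c2} carries no extra clause, so I apply move (ii) to make $\sigma$ increasing and invoke the theorem directly, obtaining o-topological, hence topological, equivalence. When $B(p)=\{b\}$ and $B(q)=\{c\}$ are singletons, the monotonicity of $\sigma$ is vacuous, so move (ii) is not spent on $\sigma$ and remains available to satisfy the singleton clause: choosing a regular value $m > b$, the fiberwise behaviour of $h$ near the bifurcation value produces a regular value $d$ of $q$ with $p^{-1}(m) = h^{-1}(q^{-1}(d))$; if $d>c$ the increasing extension $\overline\sigma$ exists and Theorem \ref{theorem:main} applies, while if $d<c$ I replace $q$ by $-q$, sending $c\mapsto -c$ and $d\mapsto -d$ with $-d>-c$, again yielding the increasing extension and the required o-topological equivalence.

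The main obstacle I anticipate is precisely this singleton clause in the reverse direction: one must know that $h$, which a priori only sends leaves to leaves, actually carries a whole generic fiber $p^{-1}(m)$ onto a whole generic fiber $q^{-1}(d)$ near the bifurcation value, so that a regular value $d$ with $p^{-1}(m) = h^{-1}(q^{-1}(d))$ genuinely exists. This fiberwise matching is the crucial structural input; granting it, the two orientation moves are exactly enough to place $d$ on the correct side of $c$ and reduce to Theorem \ref{theorem:main}. The conceptual point is that the two available moves match the two obstructions perfectly: either $B(p)$ is not a singleton, in which case monotonicity of $\sigma$ is a genuine constraint but there is no extra clause, or $B(p)$ is a singleton, in which case monotonicity is free and the freed move disposes of the extra clause.
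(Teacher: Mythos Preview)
Your overall strategy---using the two ``orientation moves'' (reflect the source to make $h$ orientation-preserving; negate the target to make $\sigma$ increasing) to reduce to the o-topological statement---is exactly what the paper does. The forward direction and the non-singleton case of the reverse direction go through as you describe.

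The gap is the one you yourself flag in the singleton case. You route the argument through Theorem~\ref{theorem:main}, whose singleton clause demands a regular value $m$ with $p^{-1}(m)=h^{-1}(q^{-1}(d))$, i.e.\ that $h$ match an \emph{entire} generic fibre of $p$ with an entire generic fibre of $q$. For an arbitrary foliation equivalence $h$ this need not hold: when $B(p)=\{b\}$, each strip $(a_j,a_{j+1})\times\R$ contains exactly one separatrix (at level $b$), so $h$ is pinned down on the separatrices but has an independent monotone degree of freedom on the ordinary leaves of each strip; the components of $p^{-1}(m)$ lying in different strips can therefore land in different level sets of $q$. Your appeal to ``fiberwise behaviour of $h$ near the bifurcation value'' does not close this, and you acknowledge as much (``granting it'').

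The paper sidesteps the issue by reducing instead to the more general Theorem~\ref{mmain} (via Corollary~\ref{cor5.1}), whose singleton clause asks only for a \emph{single} connected component $\xi\subset p_1^{-1}(m)$ with $\overline{\sigma}(m)=p_2(h(\xi))$. That is trivially available: pick any ordinary leaf $\xi$, set $d=q(h(\xi))$, and then choose $\kappa=\pm1$ so that $\kappa d>\kappa c$. So your two-move bookkeeping is correct; the fix is simply to invoke Theorem~\ref{mmain}/Corollary~\ref{cor5.1} rather than Theorem~\ref{theorem:main} in the singleton case, which replaces the unproven whole-fibre matching by a single-leaf condition that is automatic.
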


See also the more general Corollary \ref{cor5.1} below. 
In condition (b), we can not assure that $\sigma$ is increasing if $p$ and $q$ are only topologically equivalent, see Example \ref{only}. 

From our results we obtain \emph{complete topological invariants} for topological and o-topological equivalence of finite linear-like submersions by putting together the \emph{separatrix configuration}  $S\mathfrak{S}_{\mathscr{F}(p)}$ of the foliation $\mathscr{F}(p)$, according to Definition \ref{sepcon} below, and some ``order'' in the image of the set of \emph{separatrices} inside the bifurcation set $B(p)$ of $p$ (for precise definitions of these concepts, see Section \ref{section:preliminaries}): 
\begin{theorem}\label{corollary:main}
Two finite linear-like submersions $p, q: \R^2 \to \R$ are o-topologically equivalent if and only if $B(p) = B(q) = \emptyset$ or the following conditions hold: 
\begin{enumerate}[label={\textnormal{(\alph*)}}]
\item The separatrix configurations of $\mathscr{F}(p)$ and $\mathscr{F}(q)$ are isomorphic. 
\item There exists an increasing bijection $\sigma: B(p) \to B(q)$ such that $\sigma\circ p(\gamma) = q\circ g(\gamma)$ for every separatrix $\gamma \in \mathfrak{S}_{\mathscr{F}(p)}$, where $g$ is an isomorphism between $S\mathfrak{S}_{\mathscr{F}(p)}$ and $S\mathfrak{S}_{\mathscr{F}(q)}$. 
If $B(p)$ is a singleton, there further exist an ordinary leaf $\xi$ in $S\mathfrak{S}_{\mathscr{F}(p)}$ with $p(\xi) = m \notin B(p)$ and an extension of $\sigma$ to an increasing bijection $\overline{\sigma}: B(p) \cup\{m\} \to B(q) \cup \{\overline{\sigma}(m)\}$ such that $\overline{\sigma}(m) = q(g(\xi))$.  
\end{enumerate}
\end{theorem}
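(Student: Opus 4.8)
The plan is to deduce Theorem \ref{corollary:main} from Theorem \ref{theorem:main} by showing that conditions (a) and (b) of the present statement are, respectively, reformulations of conditions \ref{c1} and \ref{c2} of Theorem \ref{theorem:main}; once this translation is in place the equivalence with o-topological equivalence is immediate. Throughout I would keep the alternative $B(p) = B(q) = \emptyset$ aside, since in that case both statements assert exactly the same thing, and I may therefore assume $B(p), B(q) \neq \emptyset$.

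First I would treat the equivalence of condition (a) with condition \ref{c1}. Here I would appeal to the fact — which I expect to be available from the paper's development of separatrix configurations — that for the foliations $\mathscr{F}(p)$ of finite linear-like submersions the configuration $S\mathfrak{S}_{\mathscr{F}(p)}$ is a complete invariant, so that $\mathscr{F}(p)$ and $\mathscr{F}(q)$ are o-topologically equivalent if and only if $S\mathfrak{S}_{\mathscr{F}(p)}$ and $S\mathfrak{S}_{\mathscr{F}(q)}$ are isomorphic. The point I would stress for later use is the two-way correspondence between the geometric and the combinatorial datum: every o-equivalence homeomorphism $h$ between $\mathscr{F}(p)$ and $\mathscr{F}(q)$ induces an isomorphism $g$ of separatrix configurations, and conversely every such $g$ is realized by an o-equivalence homeomorphism $h$ carrying each separatrix $\gamma$ onto $g(\gamma)$. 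I would fix one such compatible pair $(h,g)$ and use it in both conditions.

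Next I would translate condition \ref{c2} into condition (b). The starting observation, also from the earlier sections on finite linear-like submersions, is that $B(p)$ is precisely the set of values of $p$ on its separatrices, $B(p) = \{\, p(\gamma) : \gamma \in \mathfrak{S}_{\mathscr{F}(p)} \,\}$, and likewise for $q$; in particular $p$ is constant on each separatrix, so $p(\gamma)$ is well defined and the relation $\sigma \circ p(\gamma) = q \circ g(\gamma)$ is meaningful. For the implication \ref{c2} $\Rightarrow$ (b) I would simply restrict the level-set identity to a single separatrix: if $\gamma$ has $p(\gamma) = \sigma^{-1}(c)$, then $\gamma \subseteq p^{-1}(\sigma^{-1}(c)) = h^{-1}(q^{-1}(c))$, whence $g(\gamma) = h(\gamma) \subseteq q^{-1}(c)$ and $q(g(\gamma)) = c = \sigma(p(\gamma))$; running over all separatrices and all $c \in B(q)$ gives (b). The singleton clauses of the two statements correspond in the same way, reading the extra value $m \notin B(p)$ of \ref{c2} as $p(\xi)$ for an ordinary leaf $\xi$ and transporting the identity $p^{-1}(m) = h^{-1}(q^{-1}(\overline{\sigma}(m)))$ to $\overline{\sigma}(m) = q(g(\xi))$.

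The reverse implication (b) $\Rightarrow$ \ref{c2} is the main obstacle, because (b) constrains only the separatrices while \ref{c2} demands an identity of entire level sets, which in general also contain ordinary leaves. The key is that for finite linear-like submersions the value labelling of all leaves is reconstructed from the separatrix configuration together with the values assigned to the separatrices: the ordinary leaves lying between two consecutive separatrices are linearly ordered and $p$ varies monotonically across them, so matching the separatrix values via $\sigma$ forces the intermediate values. Concretely, I would show that the homeomorphism $h$ realizing $g$ carries $p^{-1}(\sigma^{-1}(c))$ onto $q^{-1}(c)$ leaf by leaf: the separatrices go over correctly by (b), and each ordinary leaf at that level is pinned down by its position relative to those separatrices in the leaf space, which the induced map $\bar h$ preserves, together with the monotonicity of the descended functions $\bar p$ and $\bar q$. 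The role of the increasingness of $\sigma$, and of the extra reference value $m$ in the singleton case, is exactly to fix the orientation of this monotone correspondence and rule out the orientation-reversing ambiguity — which the example $p, -p$ with $p(x,y) = x + x^3 y$ of the introduction shows to be unavoidable. With \ref{c2} $\Leftrightarrow$ (b) and \ref{c1} $\Leftrightarrow$ (a) both in hand, the theorem follows directly from Theorem \ref{theorem:main}.
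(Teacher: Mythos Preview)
Your strategy of routing through Theorem~\ref{theorem:main} is a detour that creates the very ``main obstacle'' you then struggle with. The paper deduces Theorem~\ref{corollary:main} from Theorem~\ref{mmain} (together with Markus's Theorem~\ref{teo-markus}), and this makes the argument nearly trivial: condition \ref{cc2} of Theorem~\ref{mmain} asks only for $\sigma(p(\nu_a)) = q(h(\nu_a))$ at the finitely many vertical separatrices $\nu_a$, $a\in Z_p$, not for an identity of full level sets. Since every $\nu_a$ is a separatrix, condition (b) of Theorem~\ref{corollary:main} gives this immediately once one takes an o-equivalence homeomorphism $h$ realizing the isomorphism $g$ (which Markus provides). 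The implication in the other direction is Step~2 of the proof of Theorem~\ref{mmain}: every separatrix is inseparable from some $\nu_a$ and hence carries the same $p$-value, so the relation propagates from the $\nu_a$'s to all of $\mathfrak{S}_{\mathscr{F}(p)}$.

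By contrast, your implication ``(b) of Theorem~\ref{corollary:main} $\Rightarrow$ \ref{c2} of Theorem~\ref{theorem:main}'' has a genuine gap. You assert that ``each ordinary leaf at that level is pinned down by its position relative to those separatrices in the leaf space, which the induced map $\bar h$ preserves,'' but the position of an ordinary leaf relative to the separatrices determines only its canonical region, not the leaf itself. An o-equivalence homeomorphism $h$ obtained from Markus realizing $g$ is unconstrained on ordinary leaves outside the separatrix configuration; there is no reason it should carry the ordinary component of $p^{-1}(\sigma^{-1}(c))$ lying in a given strip onto the ordinary component of $q^{-1}(c)$ in the corresponding strip. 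To force this you would have to \emph{construct} $h$ by extending $g$ to all leaves via the monotone parametrization of each strip by $p$-values --- which is exactly Steps~6--12 of the proof of Theorem~\ref{mmain}. So your route through Theorem~\ref{theorem:main} either fails as written or silently reproves Theorem~\ref{mmain}, whereas invoking Theorem~\ref{mmain} directly avoids the issue altogether.
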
 
Since there are not vanishing phenomenon in the linear-like submersion case, according to Corollary \ref{12091} below, the image of the separatrices of $\mathscr{F}(p)$ agrees with the set $B(p)$, so the above relations make sense. 
For topological equivalence we have 
\begin{corollary}\label{varcorollary:main}
Two finite linear-like submersions $p, q: \R^2 \to \R$ are topologically equivalent if and only if $B(p) = B(q) = \emptyset$ or the following conditions hold: 
\begin{enumerate}[label={\textnormal{(\alph*)}}]
\item The separatrix configurations of $\mathscr{F}(p)$ and $\mathscr{F}(q)$ are isomorphic or anti-isomorphic. 
\item There exists a monotone bijection $\sigma:B(p)\to B(q)$ such that $\sigma\circ p(\gamma) = q\circ g(\gamma)$ for every $\gamma \in \mathfrak{S}_{\mathscr{F}(p)}$, where $g$ is an isomorphism or anti-isomorphism between $S\mathfrak{S}_{\mathscr{F}(p)}$ and $S\mathfrak{S}_{\mathscr{F}(q)}$. 
\end{enumerate}
\end{corollary}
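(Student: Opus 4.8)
The plan is to derive this statement from Theorem~\ref{corollary:main}, which already settles the o-topological case, by recording how topological equivalence decomposes into orientation-preserving pieces. Fix the orientation-reversing reflection $\rho(x,y)=(-x,y)$ and note that $-q$, $q\circ\rho$ and $-q\circ\rho$ are again finite linear-like submersions. The first step is to prove the elementary reduction: $p$ and $q$ are topologically equivalent if and only if $p$ is o-topologically equivalent to one of $q$, $-q$, $q\circ\rho$, $-q\circ\rho$. For the forward implication I would write a topological equivalence as $\ell\circ p=q\circ h$ and split into four cases according to whether $h$ preserves or reverses orientation and whether $\ell$ is increasing or decreasing; in each case one absorbs the reflection into $q$ (when $h$ reverses orientation, replacing $h$ by $\rho\circ h$) and the sign change into $q$ (when $\ell$ decreases, replacing $\ell$ by $-\ell$), turning the relation into an o-topological equivalence with one of the four targets. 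The converse is immediate, since $-q$, $q\circ\rho$ and $-q\circ\rho$ are each topologically equivalent to $q$ and o-topological equivalence implies topological equivalence.

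The second step is to track the effect of the two operations on the data of Theorem~\ref{corollary:main}. Negation leaves the foliation unchanged, $\mathscr{F}(-q)=\mathscr{F}(q)$, so $S\mathfrak{S}_{\mathscr{F}(-q)}$ is isomorphic to $S\mathfrak{S}_{\mathscr{F}(q)}$, while $B(-q)=-B(q)$ and the values along separatrices get negated; hence an increasing $\sigma$ for the pair $(p,-q)$ corresponds, after composing with $t\mapsto-t$, to a decreasing $\sigma$ for $(p,q)$. The reflection $\rho$ is an orientation-reversing diffeomorphism, so $S\mathfrak{S}_{\mathscr{F}(q\circ\rho)}$ is anti-isomorphic to $S\mathfrak{S}_{\mathscr{F}(q)}$, while $B(q\circ\rho)=B(q)$ with the values along separatrices preserved; hence an increasing $\sigma$ for $(p,q\circ\rho)$ corresponds to an increasing $\sigma$ for $(p,q)$ through an anti-isomorphism. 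Combining the four cases, the disjunction ``$p$ is o-topologically equivalent to some target'' becomes precisely conditions (a) and (b): the configuration of $\mathscr{F}(p)$ is isomorphic (targets $q,-q$) or anti-isomorphic (targets $q\circ\rho,-q\circ\rho$) to that of $\mathscr{F}(q)$, and $\sigma$ is increasing (targets $q,q\circ\rho$) or decreasing (targets $-q,-q\circ\rho$), that is, monotone. The empty alternative is preserved since $B(-q)=-B(q)$ and $B(q\circ\rho)=B(q)$ vanish exactly when $B(q)$ does. Away from the singleton case this completes both directions by applying Theorem~\ref{corollary:main} to whichever target is o-topologically equivalent to $p$.

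The step I expect to require the most care is the singleton case $B(p)=\{b_p\}$, $B(q)=\{b_q\}$, where Theorem~\ref{corollary:main} carries an additional constraint --- an auxiliary ordinary leaf $\xi$ and an increasing extension $\overline{\sigma}$ --- with no counterpart in the corollary. Here a bijection $\sigma:B(p)\to B(q)$ between singletons is automatically monotone, and since the separatrices of $\mathscr{F}(p)$ and $\mathscr{F}(q)$ take the single values $b_p$ and $b_q$ by Corollary~\ref{12091}, condition (b) reduces to $\sigma(b_p)=b_q$ and is automatic once (a) holds; thus the content is that isomorphism or anti-isomorphism of configurations already forces topological equivalence. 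I would argue that the auxiliary constraint only serves to fix the global orientation and that the freedom to negate removes it: choosing an ordinary leaf $\xi$ with $p(\xi)=m\neq b_p$, the value $q(g(\xi))$ lies on one definite side of $b_q$, and because negation reverses the side of a value relative to $B$, exactly one of the targets $q$ or $-q$ (respectively $q\circ\rho$ or $-q\circ\rho$ in the anti-isomorphic case) satisfies the side-matching demanded by $\overline{\sigma}$. Applying Theorem~\ref{corollary:main} to that target yields o-topological equivalence of $p$ with it, hence topological equivalence of $p$ and $q$. This is the very mechanism by which $p=x+x^3y$ and $-p$ are topologically but not o-topologically equivalent, as in Example~\ref{dd45}, and it accounts for the disappearance of the singleton clause; that $\sigma$ cannot in general be taken increasing is witnessed by Example~\ref{only}. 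Finally, the forward direction in the singleton case is immediate: (a) follows from the topological equivalence of $\mathscr{F}(p)$ and $\mathscr{F}(q)$, and (b) is automatic.
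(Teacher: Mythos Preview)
Your argument is correct, but it takes a different route from the paper. The paper dispatches Corollary~\ref{varcorollary:main} in one line as a direct consequence of Corollary~\ref{cor5.1} (the topological version of Theorem~\ref{mmain}, stated at the level of foliations rather than separatrix configurations) together with Markus's Theorem~\ref{teo-markus}, which translates ``$\mathscr{F}(p)$ and $\mathscr{F}(q)$ are topologically equivalent'' into ``their separatrix configurations are isomorphic or anti-isomorphic.'' In other words, the paper first reduces topological to o-topological equivalence \emph{at the foliation level} (inside the proof of Corollary~\ref{cor5.1}, via $p_2(x,\tau y)$ and $\kappa p_3$) and only then invokes Markus to pass to configurations. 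You instead start from Theorem~\ref{corollary:main}, which is already phrased in terms of separatrix configurations, and perform the reduction to the o-case via the four targets $q,\,-q,\,q\circ\rho,\,-q\circ\rho$ directly at that level. The two arguments are essentially mirror images: the paper's route is shorter because Corollary~\ref{cor5.1} has already absorbed the case analysis (including the singleton case), while your route makes the mechanism---negation flips the monotonicity of $\sigma$, reflection flips the isomorphism/anti-isomorphism dichotomy, and in the singleton case the freedom to negate kills the auxiliary-leaf constraint---fully explicit and independent of Corollary~\ref{cor5.1}. Both are valid; yours is more self-contained and conceptually transparent, the paper's is more economical.
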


The organization of the paper is as follows: 
Section \ref{section:preliminaries} is mainly devoted to recall some results on regular foliations on the plane, as well as bifurcation theory. 
In Section \ref{section:linear} we study the linear-like functions. 
In Section \ref{section:examples} we give examples of linear-like submersions with different kind of behaviors. 
The main results stated above are then proved in Section \ref{section:final} as a consequence of the more general Theorem \ref{mmain}.

In order to simplify notations, sometimes throughout the text a singleton will be viewed as just its element. 

\section{Regular foliations and bifurcation sets}\label{section:preliminaries}

\subsection{Regular foliations, chordal relations and a complete topological invariant}\label{890jd}
Given an open set $U\subset \R^2$, we say that a collection of curves $\mathscr{F}$ contained in $U$ is a \emph{regular foliation} of $U$ if 
$$
U = \bigcup_{\gamma\in \mathscr{F}} \gamma,
$$ 
and for any $P\in U$ there exists an open neighborhood of $P$, $N(P)$, and a homeomorphism $h: N(P) \to \R^2$ such that each curve of $\mathscr{F}|_{N(P)} \dot{=} \{\gamma\cap N(P)\ |\  \gamma\in \mathscr{F}\}$ is carried by $h$ onto a line $x=\textnormal{const}$. 
A curve of $\mathscr{F}$ will be also called \emph{leaf}. 

For instance, let $f:\mathbb{R}^2\to\mathbb R$ be a $C^{\infty}$ submersion function. 
From the implicit function Theorem, $f$ defines a regular foliation of $\R^2$, denoted by $\mathscr{F}(f)$, whose leaves are the connected components of the level sets of $f$ 
$$ 
L_c=L_c(f)= f^{-1}(c), 
$$  
$c\in f(\mathbb R^2)$. 
Foliations $\mathscr{F}(f)$ of $\R^2$ will play the main role of this paper. 

To any foliation $\mathscr{F}$ of an open set $U\subset \R^2$ we associate the \emph{space of leaves} $U/\mathscr{F}$, defined to be the quotient space $U/{\mathord\sim}$, where $\sim$ is the equivalence relation in $U$ saying that two points are related if they belong to the same leaf of $\mathscr{F}$. 
This topological space, with the quotient topology, may not be Hausdorff. 

Two leaves of $\mathscr{F}$ whose classes can not be separated in $U/\mathscr{F}$ are said \emph{inseparable leaves}. 
We can equivalently define them as follows: two leaves $l_1, l_2 \in \mathscr{F}$ are inseparable if for any cross sections $C_1$ and $C_2$ through $l_1$ and $l_2$ there exists another leaf $l_3\in \mathscr{F}$ intersecting $C_1$ and $C_3$. 

We say that a leaf $l\in \mathscr{F}$ is a \emph{separatrix} if it lives in the closure of the set of the inseparable leaves. 
A separatrix which is not an inseparable leaf is called a \emph{limit separatrix}. 
The set of separatrices of $\mathscr{F}$ will be denoted by $\mathfrak{S}_{\mathscr{F}}$. 
This is a closed set by definition. 
Each connected component of the complement of $\mathfrak{S}_{\mathscr{F}}$ will be called a \emph{canonical region}. 

From now on we consider regular foliations of the entire plane. 
Each curve $\gamma$ of such a foliation tends to infinity in both directions and so divides the plane into two open regions such that $\gamma$ is the common boundary. 
Rephrasing W. Kaplan \cite{K,K1}, we can view the leaves of such a foliation as ``chords'' laying on a disc. 
Kaplan managed to prove that the relations between triples of leaves of the foliation, the \emph{chordal relations}, characterize it as we will recall in Theorem \ref{KK}. 
First we recall the chordal relations. 

Given $l_1$, $l_2$ and $l_3$ three different leaves of $\mathscr{F}$, we say that $l_2$ \emph{separates} $l_1$ and $l_3$, denoting this by $l_1|l_2|l_3$ or $l_3|l_2|l_1$, if $l_1$ and $l_3$ are in different connected components of $\R^2\setminus l_2$. 
If $l_i$ does not separate $l_j$ and $l_k$ for any different $i,j,k \in \{1,2,3\}$ then we say that $l_1$, $l_2$ and $l_3$ form a \emph{cyclic triple}. 
In this case, for any points $p_i \in l_i$, $i=1,2,3$, there exists a Jordan curve $p_1p_2p_3p_1$ intersecting the leaves $l_1$, $l_2$, $l_3$ only at the points $p_i$, $i=1,2,3$, respectively. 
If this curve, from $p_1$ to $p_2$ to $p_3$ and returning to $p_1$ spins counterclockwise (resp. clockwise) we say that this cycle is \emph{positive} (resp. \emph{negative}) and denote this by $|l_1,l_2,l_3|^+$ (resp. $|l_1,l_2,l_3|^-$). 
The notation $|l_1,l_2,l_3|^{\pm}$ shall mean $|l_1,l_2,l_3|^+$ or $|l_1,l_2,l_3|^-$. 
By a \emph{chordal relation} of a triple of three different leaves $l_1, l_2, l_3 \in \mathscr{F}$ we will mean $l_1|l_2|l_3$, $l_2|l_1|l_3$, $l_1|l_3|l_2$, $|l_1,l_2,l_3|^{+}$ or $|l_1,l_2,l_3|^{-}$. 
For a very detailed and more general presentation of this, see \cite[Section 2]{K}. 

Given two subsets $U$ and $V$ of leaves of two regular foliations of the plane, we say that $U$ and $V$ are \emph{isomorphic} if there exists a bijection $g:U \to V$ such that for any triple $l_1, l_2, l_3 \in U$, the chordal relations of $l_1, l_2, l_3$ and $g(l_1), g(l_2), g(l_3)$ are the same. 
We say that $U$ and $V$ are \emph{anti-isomorphic} if there is a bijection $g: U\to V$ 
as above preserving all the chordal relations of triples of leaves but the signal of the cycles, that are reversed by $g$. 
The main result of Kaplan in \cite{K1} says that two regular foliations are o-topologically equivalent if and only if the set of leaves of them are isomorphic. 
Indeed, the result is quite more precise, see \cite[Theorem 26]{K1}: 
\begin{theorem}[\cite{K1}]\label{KK} 
If $\mathscr{F}_1$ and $\mathscr{F}_2$ are o-topologically equivalent regular foliations of the plane with o-equivalence homeomorphism $h$, then $h$ defines an isomorphism between $\mathscr{F}_1$ and $\mathscr{F}_2$. 
Conversely, if $g$ is an isomorphism between $\mathscr{F}_1$ and $\mathscr{F}_2$, then there exists an o-equivalence homeomorphism $h$ providing the o-topological equivalence of $\mathscr{F}_1$ and $\mathscr{F}_2$, and such that $h(l) = g(l)$ for all $l\in \mathscr{F}_1$. 
\end{theorem}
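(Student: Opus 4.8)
The plan is to prove the two implications separately: the forward direction is a direct verification that the chordal relations are invariants of orientation-preserving homeomorphisms, while the converse requires reconstructing a global homeomorphism of the plane out of the purely combinatorial datum $g$, and this reconstruction is the substantive part.

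For the forward direction, suppose $h$ is an o-equivalence homeomorphism. Since $h$ carries each leaf of $\mathscr{F}_1$ bijectively onto a leaf of $\mathscr{F}_2$, it induces a bijection $g$ on leaf sets, and I would check that $g$ preserves each of the five chordal relations. The separation $l_1|l_2|l_3$ is phrased entirely in terms of the connected components of $\R^2\setminus l_2$; since $h$ maps $\R^2\setminus l_2$ homeomorphically onto $\R^2\setminus g(l_2)$, it matches connected components, so $g(l_1)|g(l_2)|g(l_3)$. For a cyclic triple, $h$ carries a Jordan curve $p_1p_2p_3p_1$ meeting $l_1,l_2,l_3$ only at the $p_i$ onto a Jordan curve meeting $g(l_1),g(l_2),g(l_3)$ only at the $h(p_i)$; because $h$ preserves orientation it preserves the sense (counterclockwise or clockwise) in which this curve is traversed, so positive cycles go to positive cycles and negative to negative. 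Hence $g$ is an isomorphism.

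For the converse I would follow Kaplan's representation of a regular foliation of the plane as a system of chords on a disc, the point being that a regular foliation is recoverable, up to o-topological equivalence, from this chordal datum. Concretely, I would realize $\mathscr{F}_1$ on a standard model in which the leaves appear as chords of the open disc, so that the separation relation $l_1|l_2|l_3$ becomes ``the chord $l_2$ separates the chords $l_1$ and $l_3$'' and the sign of a cyclic triple becomes the cyclic order of the three chords induced by the boundary orientation. The isomorphism $g$ then transports the chordal datum of $\mathscr{F}_1$ exactly onto that of $\mathscr{F}_2$, and since it preserves both the separation relations and the cycle signs, it determines a combinatorial isomorphism of the two chord systems compatible with the orientation of the boundary circle.

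The substantive step, and the main obstacle, is to upgrade this leaf-by-leaf correspondence to an actual orientation-preserving homeomorphism $h:\R^2\to\R^2$ with $h(l)=g(l)$. I would build $h$ region by region. The separatrices $\mathfrak{S}_{\mathscr{F}_1}$ form a closed set whose complement is a disjoint union of canonical regions, and on a canonical region the foliation is trivial, so the leaf correspondence prescribed by $g$ can be realized there by a homeomorphism onto the corresponding canonical region of $\mathscr{F}_2$ using a product structure and matched transversals. The difficulty is coherence: these regional homeomorphisms must agree along the common separatrix leaves in their closures and must fit together continuously across the inseparable and limit separatrices, where the leaf space fails to be Hausdorff. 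Ensuring that the pieces extend continuously over each separatrix, respect the prescribed images $g(l)$ on the separatrices themselves, and globally preserve orientation (which is exactly what the preservation of cycle signs under $g$ guarantees) is where the real work lies, and it is precisely the content carried by Kaplan's \cite[Theorem 26]{K1}. Since this converse is established there in full generality, I would invoke that result rather than reproduce the gluing argument.
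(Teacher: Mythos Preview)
The paper does not give its own proof of this theorem: it is stated as a citation of Kaplan's result, with the pointer ``see \cite[Theorem~26]{K1}'' and nothing more. Your proposal is therefore not in conflict with the paper's proof --- there is none to compare against --- and your treatment is strictly more detailed: you correctly verify the forward direction (chordal relations are topological and orientation-sensitive invariants, so an o-equivalence homeomorphism induces an isomorphism), and for the converse you outline the chord-system realization and the region-by-region reconstruction before, like the paper, deferring the substantive gluing argument to Kaplan's original theorem.
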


L. Markus \cite{markus-54} proved that in order to attest the o-topological equivalence between two foliations, it is only necessary to provide an isomorphism between the sets of separatrices plus one leaf in each canonical region. 
In order to state Markus's result we first define 
 
\begin{definition}\label{sepcon} 
A \emph{separatrix configuration} of a given regular foliation $\mathscr{F}$ is the set of separatrices $\mathfrak{S}_\mathscr{F}$ together with one leaf chosen in each canonical region. 
We denote the separatrix configuration of $\mathscr{F}$ by $S\mathfrak{S}_{\mathscr{F}}$. 
\end{definition}
Two separatrix configurations of a given regular foliation are isomorphic, see again \cite{markus-54}, so we can say \emph{the} separatrix configuration of a foliation. 
Here is Markus mentioned result: 
\begin{theorem}[\cite{markus-54}]\label{teo-markus} 
Two regular foliations $\mathscr{F}_1$ and $\mathscr{F}_2$ are o-topologically equivalent if and only if the separatrix configurations $S\mathfrak{S}_{\mathscr{F}_1}$ and $S\mathfrak{S}_{\mathscr{F}_2}$ are isomorphic. 
They are topologically equivalent if and only if the separatrix configurations are isomorphic or anti-isomorphic. 
\end{theorem}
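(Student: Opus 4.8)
The plan is to deduce Theorem \ref{teo-markus} from Kaplan's Theorem \ref{KK}, which already reduces o-topological equivalence to the existence of a chordal isomorphism between the \emph{full} sets of leaves. Thus it suffices to prove that two regular foliations $\mathscr{F}_1$ and $\mathscr{F}_2$ of the plane admit isomorphic separatrix configurations if and only if their entire sets of leaves are isomorphic; the extra ``or anti-isomorphic'' clause in the non-oriented case will then follow from the observation that reversing orientation is exactly what reverses the signs of cyclic triples.

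First I would treat the easy implication. Suppose $\mathscr{F}_1$ and $\mathscr{F}_2$ are o-topologically equivalent with o-equivalence homeomorphism $h$. By Theorem \ref{KK}, $h$ induces a chordal isomorphism $g_0$ on all leaves. The property of being an inseparable leaf, being defined purely through the non-Hausdorffness of the space of leaves $\R^2/\mathscr{F}$, is preserved by $h$; hence $h$ carries $\mathfrak{S}_{\mathscr{F}_1}$ onto $\mathfrak{S}_{\mathscr{F}_2}$ and, consequently, canonical regions onto canonical regions. Restricting $g_0$ to the separatrices together with the chosen representative of each canonical region of $\mathscr{F}_1$ gives a chordal isomorphism onto the separatrices of $\mathscr{F}_2$ together with one leaf in each canonical region; since any two separatrix configurations of $\mathscr{F}_2$ are isomorphic, we may replace these images by the prescribed representatives, obtaining an isomorphism $S\mathfrak{S}_{\mathscr{F}_1} \to S\mathfrak{S}_{\mathscr{F}_2}$.

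The substantial direction is the converse: from an isomorphism $g: S\mathfrak{S}_{\mathscr{F}_1} \to S\mathfrak{S}_{\mathscr{F}_2}$ of separatrix configurations I would build a chordal isomorphism $\tilde g$ of the full foliations and then invoke Theorem \ref{KK}. Since $g$ preserves all chordal relations among separatrices and chosen representatives, it induces a bijection between the canonical regions of $\mathscr{F}_1$ and those of $\mathscr{F}_2$: each canonical region is determined by the separatrices on its boundary, and the representative leaf pins down which region among those bounded by a given set of separatrices is meant. The key structural fact I would use is that inside a canonical region the foliation is ``parallel'': the leaves are totally ordered compatibly with the separation relation, the region being swept out monotonically between its bounding separatrices. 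Consequently the chordal type of an arbitrary ordinary leaf $l$ of a region $R$, relative to the boundary separatrices and to the representative $r$ of $R$, is completely determined by the position of $l$ in this order. I would extend $g$ to each such $l$ by sending it to the leaf of the corresponding image region $R'$ occupying the analogous position, anchored at the bounding separatrices and at $g(r)$, and then verify that every chordal relation --- separations $l_1|l_2|l_3$ and cyclic triples $|l_1,l_2,l_3|^{\pm}$ with their signs --- is preserved, whether the three leaves lie in one region, in several regions, or involve separatrices.

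The main obstacle is precisely this last verification: one must show that the combinatorial data recorded in $S\mathfrak{S}_{\mathscr{F}}$ really determines the chordal type of \emph{every} triple of leaves, which requires a careful case analysis of the standard canonical regions of planar foliations and of how leaves in distinct regions are linked through the separatrices they share. Once $\tilde g$ is shown to be a genuine isomorphism, Theorem \ref{KK} furnishes the o-equivalence homeomorphism and proves the first assertion. For the second assertion I would repeat the argument allowing $h$ to reverse orientation: an orientation-reversing homeomorphism preserves all separations but reverses the orientation of every Jordan cycle, hence sends $|l_1,l_2,l_3|^{+}$ to $|l_1,l_2,l_3|^{-}$ and vice versa, which is exactly an anti-isomorphism. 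Therefore topological equivalence, as opposed to o-topological equivalence, corresponds to the separatrix configurations being isomorphic or anti-isomorphic.
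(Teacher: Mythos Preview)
The paper does not prove Theorem \ref{teo-markus}; it is quoted from Markus \cite{markus-54} and used as a black box, so there is no ``paper's own proof'' to compare against. Your sketch follows what is essentially Markus's original strategy: reduce to Kaplan's Theorem \ref{KK}, show that an isomorphism of separatrix configurations can be extended leaf by leaf across each canonical region to a chordal isomorphism of the full foliations, and handle the non-oriented case by observing that orientation reversal flips the sign of every cyclic triple. That outline is sound, and the obstacle you flag --- the case analysis checking that chordal relations among arbitrary triples are determined by the separatrix data --- is exactly where the work lies in Markus's paper; the present paper isolates a special instance of that mechanism in Lemma \ref{canonicas}.
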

This theorem says that the separatrix configuration is a complete topological invariant of a regular foliation. 
By the \emph{trivial foliation} we mean any representative of the class of foliations without separatrices. 

Before presenting the last result of this subsection, we introduce a notation: given $l_1$ and $l_2$ two leaves of $\mathscr{F}$, we define by $[l_1, l_2]$ (resp. $(l_1, l_2)$) to be the closed (resp. open) connected set whose boundary is $l_1\cup l_2$. 
Further $(l_1, l_2] = (l_1, l_2) \cup l_2 = [l_1, l_2]\setminus l_1$. 

The following result is Theorem 1 of Markus's paper \cite{markus-72} for a regular foliation $\mathscr{F}$ given by the orbits of a noncritical vector field of the plane. 
After a glance in its proof, one concludes that it is valid for any regular foliation of $\R^2$, by possibly using \cite[Theorem 30]{K} to conclude the necessity of statement \ref{222}. 

\begin{theorem}\label{Markuschara}
Let $\mathscr{F}$ be a regular foliation of the plane. 
\begin{enumerate}
\item A leaf $l \in \mathscr{F}$ is a separatrix if and only if for any pair of leaves $l_1, l_2 \in \mathscr{F}$ such that $l_1| l |l_2$, it follows that $[l_1, l_2]$ contains a cyclic triple of solutions. 
\item\label{222} On the other hand a leaf $l \in \mathscr{F}$ is not a separatrix if and only if there exist leaves $l_1, l_2\in \mathscr{F}$ with $l_1| l |l_2$ such that $[l_1, l_2]$ is the topological image of the plane strip $[0,1]\times \R$ with the curves of $\mathcal{F}|_{[l_1, l_2]}$ corresponding to the vertical lines of the strip.
\end{enumerate}
\end{theorem}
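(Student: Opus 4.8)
The plan is to prove that, for a fixed leaf $l\in\mathscr{F}$, the following three conditions are equivalent, and then to read both items of the theorem off this equivalence:
(A) $l$ is not a separatrix;
(B) there are leaves $l_1,l_2$ with $l_1|l|l_2$ such that $[l_1,l_2]$ contains no cyclic triple;
(C) there are leaves $l_1,l_2$ with $l_1|l|l_2$ such that $[l_1,l_2]$ is the topological image of $[0,1]\times\R$ carrying $\mathscr{F}|_{[l_1,l_2]}$ onto the vertical lines.
I would establish the cyclic chain $(A)\Rightarrow(C)\Rightarrow(B)\Rightarrow(A)$. The second item of the theorem is then exactly $(A)\Leftrightarrow(C)$, while the first item is the contrapositive of $(A)\Leftrightarrow(B)$, since ``$l$ is a separatrix'' negates (A) and ``every $[l_1,l_2]$ with $l_1|l|l_2$ contains a cyclic triple'' negates (B).

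The implication $(C)\Rightarrow(B)$ is immediate: in a band foliated by the vertical lines of $[0,1]\times\R$ any three leaves occur at three heights and the middle one separates the other two, so no cyclic triple can occur. I would also record here the companion fact, used later, that such a band cannot render $l$ a separatrix: the leaf space of $\mathscr{F}|_{[l_1,l_2]}$ is an interval, hence Hausdorff, so no interior leaf is inseparable from another and none lies in the closure of the inseparable leaves of $\mathscr{F}$ that meet the band.

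For $(B)\Rightarrow(A)$ I would argue by contraposition, transcribing Markus's proof of Theorem~1 in \cite{markus-72} to the foliated setting: assuming $l$ is a separatrix, I must exhibit a cyclic triple in every band $[l_1,l_2]$ with $l_1|l|l_2$. Since $\mathfrak{S}_{\mathscr{F}}$ is closed and $l$ lies in the closure of the inseparable leaves, the open region $(l_1,l_2)$ meets a pair of inseparable leaves; feeding their cross-section characterization (for any cross sections there is a third leaf meeting both) into the Jordan-curve description of the chordal relations produces three leaves inside $(l_1,l_2)$ none of which separates the other two, i.e. a cycle $|\cdot,\cdot,\cdot|^{\pm}$. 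The only ingredient of \cite{markus-72} used beyond this is the local product structure of the foliation, which a regular foliation possesses by definition, so the argument transfers.

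The heart of the matter, and the step I expect to be the main obstacle, is $(A)\Rightarrow(C)$. Here I would place $l$ in its canonical region $R$, a connected component of $\R^2\setminus\mathfrak{S}_{\mathscr{F}}$, and use that $R$ carries no separatrices to invoke \cite[Theorem~30]{K}, which identifies such a separatrix-free regular family with the parallel one; thus $\mathscr{F}|_R$ is a product whose leaf space is a Hausdorff one-manifold on which the separation relation $l_1|\cdot|l_2$ coincides with the order. Choosing $l_1,l_2\in R$ on the two sides of $l$ and close enough in this leaf space then yields a band $[l_1,l_2]$ lying entirely in $R$, hence a trivial strip, giving (C); finally Theorem \ref{KK} realizes the chordal equivalence by an actual homeomorphism onto $[0,1]\times\R$. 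The delicate points that I expect to carry the real work are two: showing that $R$ is \emph{convex} for the separation relation, i.e. that no separatrix can be trapped between two sufficiently nearby leaves of $R$ — this rests on the closedness of $\mathfrak{S}_{\mathscr{F}}$ together with the structure of canonical regions — and applying Kaplan's Theorem~30 to the bounded sub-band rather than to the whole plane, making sure the deduction ``separatrix-free $\Rightarrow$ product'' is obtained directly and not circularly through the first item of the present theorem.
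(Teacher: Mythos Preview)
Your proposal is correct and follows the same route the paper indicates: the paper does not give its own proof of this theorem but simply records that it is Theorem~1 of Markus \cite{markus-72} for flows, that the argument carries over to arbitrary regular foliations, and that Kaplan's Theorem~30 \cite{K} may be needed for the necessity of item~\ref{222}. Your cycle $(A)\Rightarrow(C)\Rightarrow(B)\Rightarrow(A)$, with Markus's argument for $(B)\Rightarrow(A)$ and Kaplan's Theorem~30 for $(A)\Rightarrow(C)$, is exactly this strategy made explicit, and the two delicate points you flag (convexity of the canonical region for the separation relation, and the non-circular application of Kaplan's result to a sub-band) are precisely the places where the transfer from the vector-field setting to general regular foliations requires care.
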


\subsection{Bifurcation sets} 
The topology of the fiber $f^{-1}(y)$ of a given continuous function $f$ may change when $y$ varies. 
As our examples in the introduction section, these changes can occur even when $y$ do not cross a singular value. 
The set where this changes happen is the \emph{bifurcation set}. 
More precisely: 

\begin{definition}\label{definition:tf} 
We say that a continuous map $p:X\to Y$ between topological spaces $X$ and $Y$ is a \textit{trivial fibration} if there exists a topological space $F$ and a homeomorphism $\varphi: F\times Y\to X$ such that $p\circ \varphi$ is the trivial projection. 
We say further that $p$ is a \textit{locally trivial fibration at} $y\in Y$ if there is a neighbourhood $U$ of $y$ such that the restriction $p^{-1}(U) \xrightarrow{\phantom{.} p \phantom{.}} U$ is a topologically trivial fibration. 
When $y\in Y$ does not satisfy this property, we say that $y$ is a \textit{bifurcation value}. 
We denote by $B(p)$ the set of bifurcation values of $p$. 
The set $B(p)$ is also called the \textit{bifurcation set} of $p$. 
In case $B(p) = \emptyset$ we say that $p$ is a \emph{locally trivial fibration}. 
\end{definition}

Notice that if $p$ is a locally trivial fibration, then its fibers are pairwise homeomorphic. 
It is known that if $p:\mathbb K^n\to\mathbb K$ is a polynomial function, where $\mathbb{K} = \R$ or $\C$, then $B(p)$ is a finite set (see \cite[Cor. 1.2.14]{T}). 

It is easy to check that if $p, q: X \to Y$ are topologically equivalent, i.e., if $q = \ell\circ p \circ h^{-1}$, with $\ell: Y \to Y$ and $h: X \to X$ homeomorphisms, then $p$ is a locally trivial fibration at $y\in Y$ if and only if $q$ is a locally trivial fibration at $\ell(y)$. 
So we have 
\begin{lemma}\label{prop-ida-1} 
Let $p,\,q: X\to Y$ satisfying $\ell \circ p = q \circ h$, with $\ell: Y \to Y$ and $h: X \to X$ homeomorphisms. 
Then $B(q) = \ell(B(p))$. 
In particular, if $p, q: \R^2 \to \R$ are o-topologically equivalent functions then $B(p)$ and $B(q)$ are homeomorphic with an increasing homeomorphism. 
\end{lemma}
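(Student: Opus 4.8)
The plan is to reduce the identity $B(q)=\ell(B(p))$ to the pointwise statement that $p$ is a locally trivial fibration at $y\in Y$ if and only if $q$ is a locally trivial fibration at $\ell(y)$. Since, by definition, $y\in B(p)$ means exactly that $p$ fails to be locally trivial at $y$ (and similarly for $q$), this pointwise equivalence immediately yields $\ell(B(p))=B(q)$, because $\ell$ is a bijection of $Y$. Thus the whole content is the transport of trivializations under the relation $\ell\circ p=q\circ h$, equivalently $q=\ell\circ p\circ h^{-1}$.

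To prove the forward direction of the pointwise equivalence, suppose $p$ is locally trivial at $y$, so there are a neighborhood $U$ of $y$, a space $F$, and a homeomorphism $\varphi: F\times U\to p^{-1}(U)$ with $p\circ\varphi$ the projection $\pi_U$. I would take $V=\ell(U)$, which is an open neighborhood of $\ell(y)$ since $\ell$ is a homeomorphism. The first observation is the set identity $q^{-1}(V)=h(p^{-1}(U))$: using $q=\ell\circ p\circ h^{-1}$ and the injectivity of $\ell$, a point $x$ satisfies $q(x)\in\ell(U)$ iff $p(h^{-1}(x))\in U$ iff $h^{-1}(x)\in p^{-1}(U)$. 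Then I define $\psi: F\times V\to q^{-1}(V)$ by $\psi=h\circ\varphi\circ(\mathrm{id}_F\times\ell^{-1})$; this is a composition of homeomorphisms (restricting $h$ to $p^{-1}(U)$), hence a homeomorphism onto $q^{-1}(V)$. It remains to check that it trivializes $q$: from $q\circ h=\ell\circ p$ and $p\circ\varphi=\pi_U$ one computes $q\circ\psi=\ell\circ\pi_U\circ(\mathrm{id}_F\times\ell^{-1})$, which sends $(f,v)$ to $\ell(\ell^{-1}(v))=v$, that is $q\circ\psi=\pi_V$. Hence $q$ is locally trivial at $\ell(y)$. The converse direction follows by symmetry, applying the same argument with the roles of $p$ and $q$ interchanged via $p=\ell^{-1}\circ q\circ h$.

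Finally, for the ``in particular'' assertion, if $p,q:\R^2\to\R$ are o-topologically equivalent then the accompanying homeomorphism $\ell:\R\to\R$ is orientation preserving, hence strictly increasing. By the first part $\ell(B(p))=B(q)$, so the restriction $\ell|_{B(p)}: B(p)\to B(q)$ is an increasing bijection, and being the restriction of a homeomorphism it is an increasing homeomorphism for the subspace topologies.

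I expect no genuine obstacle: the lemma is of the ``easy to check'' type flagged just before its statement. The only place demanding care is the bookkeeping in the second paragraph, namely correctly choosing the neighborhood $V=\ell(U)$, verifying the identity $q^{-1}(V)=h(p^{-1}(U))$, and confirming that the composite $\psi$ is genuinely fiber preserving over $V$ rather than merely a homeomorphism.
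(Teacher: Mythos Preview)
Your proposal is correct and follows exactly the approach the paper indicates: the paper does not write out a proof for this lemma but merely records, in the sentence preceding it, that ``it is easy to check'' the pointwise equivalence (namely that $p$ is a locally trivial fibration at $y$ if and only if $q$ is a locally trivial fibration at $\ell(y)$), from which the lemma follows immediately. Your argument supplies precisely the bookkeeping the paper leaves to the reader.
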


\section{Linear-like functions}\label{section:linear}
In this section we study the linear like functions $p(x,y) = r(x) + s(x) y$ as defined in \eqref{linearform}. 
We pay attention to the submersion ones, studying the foliation $\mathscr{F}(p)$. 
We first describe the connected components of the level sets $L_c$ of $p$, i.e., the leaves of $\mathscr{F}(p)$. 
This provides a subdivision of $\R^2$ in certain vertical strips where the behavior of $\mathscr{F}(p)$ is described. 
We study the inseparable leaves of $\mathscr{F}(p)$ in Proposition \ref{prop-principal} and the limit separatrices in Corollary \ref{92345}. 
Then we systematically study the distinct behaviors of the foliation in each of these mentioned strips. 
In particular we will see that the behavior of $\mathscr{F}(p)$ in each of these strips depends only on the separatrices and their ``asymptotic'' behaviour, as defined later. 

We will end the section providing the bifurcation set of a finite linear-like submersion in Theorem \ref{bifurcation} and concluding that finite linear-like submersion functions do not present the vanishing at infinity phenomenon. 

It is straightforward to characterize when a linear-like function is a submersion, as well as to describe its levels sets: 
\begin{lemma}\label{submersion} 
Let $p$ be a linear-like function as in \eqref{linearform}. 
Then 
\begin{enumerate}[label={\textnormal{(\alph*)}}]
\item $p$ is a submersion if and only if $s(x)\neq 0$ $\forall x\in \R$ or each zero $z$ of $s(x)$ is not simple and $r'(z) \neq0$. 
\item For each $c\in \R$, the level set $L_c = p^{-1}(c)$ of $p$ is given by
$$
L_c = \left\{\big(x, N_c(x)\big) \in\mathbb{R}^2 \ | \ s(x)\neq0\right\}\cup\left\{(x,y)\in\mathbb{R}^2\ |\  s(x)=0,\, r(x)=c \right\}, 
$$ 
where 
$$
N_c(x) = \frac{c-r(x)}{s(x)}.
$$ 
\end{enumerate}
\end{lemma}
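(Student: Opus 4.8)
The plan is to compute the gradient of $p$ directly and read off both statements from it. Writing $p(x,y) = r(x) + s(x)y$, I would first record
$$
\frac{\partial p}{\partial x}(x,y) = r'(x) + s'(x)\,y, \qquad \frac{\partial p}{\partial y}(x,y) = s(x).
$$
Hence $p$ fails to be a submersion precisely at the points $(x,y)$ where both partial derivatives vanish simultaneously, that is, where $s(x) = 0$ and $r'(x) + s'(x)y = 0$.

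For part (a) I would split according to the zero set of $s$. If $s(x)\neq 0$ for every $x$, then $\partial p/\partial y$ never vanishes, so $p$ is automatically a submersion. Otherwise, let $z$ be a zero of $s$; along the vertical line $x=z$ the condition for a critical point reduces to $r'(z) + s'(z)y = 0$. If $z$ is a \emph{simple} zero, i.e.\ $s'(z)\neq 0$, this linear equation in $y$ has the solution $y = -\,r'(z)/s'(z)$, producing a critical point, so $p$ is not a submersion. If $z$ is \emph{not} simple, i.e.\ $s'(z)=0$, then $r'(z) + s'(z)y = r'(z)$ is independent of $y$, and the line $x=z$ is free of critical points exactly when $r'(z)\neq 0$. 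Collecting these cases gives the stated dichotomy: either $s$ has no zeros, or every zero of $s$ is non-simple with $r'$ nonvanishing there.

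For part (b) I would solve $r(x) + s(x)y = c$ pointwise in $x$. Where $s(x)\neq 0$ the equation determines $y$ uniquely as $y = (c-r(x))/s(x) = N_c(x)$, contributing the graph part of $L_c$. Where $s(x)=0$ the left-hand side reduces to $r(x)$, so the entire vertical fiber $\{x\}\times\R$ lies in $L_c$ when $r(x)=c$ and is disjoint from $L_c$ otherwise; this produces the second set in the description. The union of the two contributions is precisely the claimed formula for $L_c$.

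The computation is elementary and I do not anticipate a genuine obstacle; the only point demanding care is the bookkeeping of the simple versus non-simple zero alternative in part (a), together with the convention that a simple zero $z$ of $s$ is one with $s'(z)\neq 0$.
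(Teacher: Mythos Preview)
Your argument is correct and is exactly the straightforward computation the paper has in mind; indeed the paper does not spell out a proof at all, merely remarking that the characterization and the description of $L_c$ are immediate. There is nothing to add.
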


From now on, $\nu_a$ will stand for the vertical straight line $x=a$, $a\in \R$, i.e., 
$$
\nu_a = \{a\}\times \R. 
$$
Also, we will denote 
$$
Z_p = \{x \in \R\ |\ s(x) = 0\}. 
$$ 
Since this is a closed subset of $\R$, it follows that 
$$
\R\setminus Z_p = \cup_{i=1}^\infty (a_i, b_i)
$$ 
is a disjoint countable many union of (some possible empty) open intervals $(a_i, b_i)$, where $a_i$ could be $-\infty$ and $b_i$ could be $\infty$. 
So the leaves of $\mathscr{F}(p)$ are completely described as being of two types: 
\begin{quote} 
In each non-empty strip $(a_i, b_i) \times \R$ there exists exactly one connected component of $L_c$ given by the graphic of the function $N_c$ restricted to $(a_i, b_i)$, for any $c \in \R$. 
For each $z\in Z_p$ the vertical straight line $\nu_z$ is a connected component of $L_c$, where $c = r(z)$. 
\end{quote}

\begin{proposition}\label{prop-principal}
Let $p$ be a linear-like submersion such that $Z_p$ is not empty. 
If $a_i>-\infty$ (resp. $b_i < \infty$) then there exists exactly one leaf of $\mathscr{F}(p)$ contained in the strip $(a_i, b_i) \times \R$ which is inseparable to $\nu_{a_i}$ (resp. $\nu_{b_i}$). 
More precisely, letting $c$ be the value assumed by $p$ in $\nu_{a_i}$ (resp. $\nu_{b_i}$), then the graphic of the function $N_c$ defined in $(a_i, b_i)$ is inseparable to $\nu_{a_i}$ (resp. $\nu_{b_i}$). 

The collection of the above leaves are the inseparable leaves of $\mathscr{F}(p)$. 
\end{proposition}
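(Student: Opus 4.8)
The plan is to reduce the whole statement to one structural observation: \emph{inside each open strip the foliation $\mathscr{F}(p)$ is trivial}. Indeed, on $(a_i,b_i)\times\R$ the map $\Phi(x,y)=(x,p(x,y))=(x,r(x)+s(x)y)$ is a homeomorphism onto $(a_i,b_i)\times\R$, since $s(x)\neq0$ there makes it a bijection in $y$; it carries each leaf of $\mathscr{F}(p)$ inside the strip, i.e.\ each graph of $N_c$, onto the horizontal line $\{y=c\}$. Hence $\mathscr{F}(p)$ restricted to any open strip has no inseparable pair, and the only way inseparability can arise is across one of the vertical boundary leaves $\nu_z$, $z\in Z_p$. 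I would use $\Phi$ throughout to compare strip leaves by their values: $N_{c_1}$ and $N_{c_2}$ are close in the space of leaves exactly when $c_1$ and $c_2$ are close.

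Next I would produce the inseparable partner of $\nu_{a_i}$, the case of $\nu_{b_i}$ being symmetric under $x\mapsto-x$. Fix $y_1$ and consider the horizontal segment $C=\{(x,y_1)\ :\ |x-a_i|<\varepsilon\}$. Since $a_i\in Z_p$ is a zero of $s$ which, by Lemma~\ref{submersion}, is not simple, we have $s'(a_i)=0$, so $\partial_x p(a_i,y_1)=r'(a_i)+s'(a_i)y_1=r'(a_i)\neq0$; thus for $\varepsilon$ small $C$ is transverse to $\mathscr{F}(p)$ and $x\mapsto p(x,y_1)$ is strictly monotone on $C$, with value $c=r(a_i)$ at $x=a_i$. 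Therefore, as $x\to a_i^+$ inside the strip, $C$ meets precisely the leaves $N_{c'}$ with $c'\to c$ from one side, and the intersection points tend to $(a_i,y_1)\in\nu_{a_i}$. Consequently the family $N_{c'}$, $c'\to c$, converges in the space of leaves simultaneously to $\nu_{a_i}$ (by this cross-section argument) and to $N_c$ (by the strip conjugation $\Phi$), so $\nu_{a_i}$ and $N_c$ cannot be separated and are inseparable. This is exactly the asserted leaf, since $c=r(a_i)$ is the value of $p$ on $\nu_{a_i}$.

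It remains to prove uniqueness and that the listed leaves exhaust the inseparable ones. For uniqueness, given $c'\neq c$ I would separate $N_{c'}$ from $\nu_{a_i}$: taking the transverse segment $C$ through $\nu_{a_i}$ and a small transverse segment $C'$ through a point of $N_{c'}$ inside the strip, the strip leaves meeting $C$ on its $x>a_i$ side have values in a small interval about $c$, while those meeting $C'$ have values in a small interval about $c'$; shrinking both until these intervals are disjoint (possible since $c'\neq c$) leaves no common leaf, because under $\Phi$ a strip leaf meets a given transverse segment iff its value lies in the corresponding value-interval. The same triviality shows that no two leaves strictly inside a strip are inseparable. Finally, if $z\in Z_p$ is not an endpoint of any strip, then either $s\equiv0$ near $z$ (so $\mathscr{F}(p)$ is locally a pencil of vertical lines) or $Z_p$ accumulates at $z$ from each side; in both cases arbitrarily close vertical leaves $\nu_{z'}$ separate $\nu_z$ from every other leaf, so $\nu_z$ is not inseparable. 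Combining, the inseparable leaves are exactly the boundary lines $\nu_{a_i},\nu_{b_i}$ with finite endpoints together with their strip partners $N_{r(a_i)},N_{r(b_i)}$, which is the collection described.

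The main obstacle is the completeness claim rather than the existence of the partner: one must rule out inseparability both strictly inside strips and at vertical lines sitting at non-endpoint points of $Z_p$, which can be topologically complicated when $Z_p$ is infinite. The conjugation $\Phi$ disposes of the strip interiors uniformly, and the accumulation of $Z_p$ at its non-endpoint points disposes of the exotic vertical leaves; making the latter precise, together with the bookkeeping of which side of each $\nu_z$ a given strip lies on, is the only delicate point.
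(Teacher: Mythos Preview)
Your argument is correct and close in spirit to the paper's, but the emphases differ. For existence, the paper also takes a small transversal through $\nu_{a_i}$ and a transversal through the strip leaf $\gamma_c$ and observes that their $p$-images overlap near $c$, so a common level set meets both; this is exactly your convergence argument, only without explicitly invoking the conjugation $\Phi$ or the leaf space. The real divergence is in the uniqueness and completeness step. You separate $N_{c'}$ from $\nu_{a_i}$ by hand, shrinking cross sections until their $p$-value intervals are disjoint; the paper instead appeals once to the general fact that two inseparable leaves of $\mathscr{F}(p)$ lie in the same level set of $p$ (continuity of $p$ on the non-Hausdorff quotient), which immediately forces any strip partner of $\nu_{a_i}$ to be the unique strip leaf in $L_{r(a_i)}$. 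Your route is more self-contained; the paper's is shorter. For the vertical leaves, the paper argues that if $\nu_{z_1},\nu_{z_2}$ were inseparable then no $z\in Z_p$ could lie between them, forcing $\{z_1,z_2\}=\{a_i,b_i\}$; this is essentially equivalent to your accumulation argument, and note that your dichotomy is not really one: ``$s\equiv 0$ near $z$'' is a special case of ``$Z_p$ accumulates at $z$ from both sides'', which always holds when $z$ is not a strip endpoint.
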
 
\begin{proof} 
To conclude that $\nu_{a_i}$ (resp. $\nu_{b_i}$) and the leaf in $(a_i, b_i)\times \R$ in the same level set of $\nu_{a_i}$ (resp. $\nu_{b_i}$) are inseparable, we provide two proofs. 
The first one is a direct application of \cite[Corollary 3.3]{BST}: 
``Let $f:\mathbb{R}^2\to\mathbb{R}$ be a $C^{\infty}$ submersion. Let $\gamma_1$ and $\gamma_2$ be two distinct connected components of a level set $f^{-1}(c)$. If $\lambda$ is an injective $C^{\infty}$ curve contained in $[\gamma_1, \gamma_2]$ connecting $\gamma_1$ to $\gamma_2$, then there exists at least a pair of inseparable leaves of $\mathscr{F}(f)$ contained in $[\gamma_1, \gamma_2]$ intercepting $\lambda$'' and the fact that two leaves inseparable to each other are connected components of the same level set of $p$. 

The second one is as follows. 
Let $c \in \R$ be the value taken by $p$ in $\nu_{a_i}$ and denote by $\gamma_c$ the only connected component of the level set $L_c$ contained in $(a_i, b_i) \times \R$, i.e., the graphic of the function $N_c$ in $(a_i, b_i)$. 

Let $S_1$ be a transversal section through $\gamma_c$ and $S_2$ be the intersection of a transversal section through the straight line $\nu_{a_i}$ with the strip $[a_i, b_i) \times \R$. 

The images $p(S_1)$ and $p(S_2)$ are intervals containing $c$ such that for $\varepsilon>0$ small enough the first one contains the interval $(c-\varepsilon, c+\varepsilon)$ and the second one the interval $(c-\varepsilon, c)$ or $(c, c+ \varepsilon)$. 
In any case, there exists $d\neq c$ with $d \in p(S_1) \cap p(S_2)$. 

The level set $L_d$ intersects $S_1$ and $S_2$  by construction.  
Since there exists exactly one connected component of $L_d$ in the strip $(a_i, b_i)\times \R$, this connected component must intersect $S_1$ and $S_2$, proving that $\gamma_c$ and $\nu_{a_i}$ are inseparable. 

Now we prove that there are no other type of inseparable leaves. 
Since in any strip $(a_i, b_i) \times \R$, there is only one connected component of each level set of $p$, any leaf in such a strip can only be inseparable to $\nu_{a_i}$ or to $\nu_{b_i}$. 

So let $\nu_{z_1}$ and $\nu_{z_2}$, $z_1, z_2 \in Z_p$, be two leaves inseparable to each other. 
There does not exist $z\in Z_p$ between $z_1$ and $z_2$ otherwise the leaf $\nu_z$ would separate $\nu_{z_1}$ from $\nu_{z_2}$. 
So $z_1, z_2 \in \{a_i,b_i\}$ for some $i$, finishing the proof. 
\end{proof}

The following is a direct consequence of Proposition \ref{prop-principal}. 
\begin{corollary}\label{92345}
Let $p$ be a linear-like submersion such that $Z_{p}\neq \emptyset$. 
The leaves $\nu_z$ such that $z \in \overline{\cup_{i=1}^{\infty}\{a_i, b_i\}}\setminus\cup_{i=1}^{\infty}\{a_i, b_i\}$ are the limit separatrices of $\mathscr{F}(p)$. 
\end{corollary}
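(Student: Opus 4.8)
The plan is to read the separatrices directly off the description of the inseparable leaves furnished by Proposition \ref{prop-principal}, and then subtract the inseparable ones to isolate the limit separatrices. Write $E = \big(\bigcup_{i=1}^\infty\{a_i,b_i\}\big)\cap\R$ for the set of finite endpoints of the complementary intervals of $Z_p$; since $Z_p$ is closed we have $E\subseteq\overline E\subseteq Z_p$. By Proposition \ref{prop-principal} the inseparable leaves of $\mathscr F(p)$ are exactly the vertical lines $\{\nu_z:z\in E\}$ together with, in each strip adjacent to a finite endpoint, the unique graph leaf $N_c$ at the matching level. Denote by $I\subseteq\R^2$ the union of all these leaves. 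By the definition recalled in Subsection \ref{890jd}, a leaf is a separatrix precisely when it is contained in $\overline I$, and such a separatrix is a \emph{limit} separatrix exactly when it is not itself an inseparable leaf. So the whole proof reduces to computing $\overline I$ and comparing it with $I$.

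First I would treat the vertical direction. The union of the inseparable vertical lines is $E\times\R$, whose closure in $\R^2$ is $\overline E\times\R$; hence $\nu_z\subseteq\overline I$ for every $z\in\overline E$, so each such $\nu_z$ is a separatrix. For the converse I would show that $\nu_z$ fails to be a separatrix whenever $z\in Z_p\setminus\overline E$. Such a $z$ admits a neighbourhood $(z-\delta,z+\delta)$ containing no endpoint, and this forces $(z-\delta,z+\delta)\subseteq Z_p$: otherwise some complementary interval $(a_j,b_j)$ would meet the neighbourhood while having $z$ outside it, placing an endpoint $a_j$ or $b_j$ inside $(z-\delta,z+\delta)$, a contradiction. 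Consequently no inseparable leaf meets the open strip $(z-\delta,z+\delta)\times\R$, so $I$, and therefore $\overline I$, is disjoint from this open set, giving $\nu_z\cap\overline I=\emptyset$. Thus $\nu_z$ is a separatrix if and only if $z\in\overline E$.

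Next I would rule out graph leaves as new separatrices. Inside a fixed open strip $(a_i,b_i)\times\R$, the set $I$ reduces to the at most two inseparable graph leaves living there: the inseparable vertical lines sit at abscissas in $E\subseteq Z_p$, and every other inseparable graph leaf has abscissas in a different complementary interval, all of which lie outside $(a_i,b_i)$. Since the limit points of these exterior leaves still have abscissa outside the open interval $(a_i,b_i)$, one gets $\overline I\cap\big((a_i,b_i)\times\R\big)=I\cap\big((a_i,b_i)\times\R\big)$, which consists exactly of the (finitely many, hence closed) inseparable graph leaves. Therefore the only graph-leaf separatrices are already inseparable, and none of them can be a limit separatrix. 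Combining the two computations, the separatrices of $\mathscr F(p)$ are the lines $\{\nu_z:z\in\overline E\}$ together with the inseparable graph leaves; deleting the inseparable leaves — namely $\{\nu_z:z\in E\}$ and those graph leaves — leaves precisely $\{\nu_z:z\in\overline E\setminus E\}$, which is the asserted family.

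I expect the delicate point, rather than a genuine obstacle, to be the closure computation transverse to the foliation: one must verify both that the vertical lines over $\overline E$ are genuinely captured by $\overline I$ and that no spurious graph leaf is swept in. The elementary lemma that a neighbourhood of $z\in Z_p\setminus\overline E$ avoiding all endpoints must lie entirely in $Z_p$ is the step most easily overlooked, but it follows at once from the decomposition $\R\setminus Z_p=\bigcup_i(a_i,b_i)$. With that in hand the corollary is indeed a direct consequence of Proposition \ref{prop-principal}.
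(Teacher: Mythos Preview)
Your proof is correct and carefully fills in exactly what the paper leaves implicit: the paper offers no argument beyond declaring the corollary ``a direct consequence of Proposition \ref{prop-principal}'', and your computation of $\overline I$ together with the observation that a neighbourhood of $z\in Z_p\setminus\overline E$ free of endpoints must lie entirely in $Z_p$ is the natural way to make that consequence rigorous.
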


Next we describe the behavior of $\mathscr{F}(p)$ in each nonempty strip $(a_i, b_i)\times \R$. 
We have the following possible configurations of separatrices and canonical regions according to Proposition \ref{prop-principal}: 
\begin{enumerate}[label={\textnormal{(\Roman*)}}]
\item\label{wa} If $a_{i} = -\infty$ (resp. $b_i = \infty$), we have one inseparable leaf and so exactly two canonical regions of $\mathscr{F}(p)$ contained in $(a_{i}, b_i) \times \R$.  
\item\label{wb} If $- \infty < a_{i} < b_i < \infty$ and the values that $p$ assumes in $\nu_{a_{i}}$ and in $\nu_{b_i}$ are distinct, we have two inseparable leaves and so three canonical regions of $\mathscr{F}(p)$ contained in $(a_{i}, b_i) \times \R$
\item\label{wc} If $- \infty < a_{i} < b_i < \infty$ and the values that $p$ assumes in $\nu_{a_{i}}$ and in $\nu_{b_i}$ are equal, we have one inseparable leaf and so two canonical regions of $\mathscr{F}(p)$ contained in $(a_{i}, b_i) \times \R$. 
\end{enumerate} 

In order to determine the foliation in each strip, we will act according to Theorem \ref{teo-markus}, describing the separatrix configuration in each strip. 
This will be done by determining the behavior of a leaf in each of the canonical regions in configurations \ref{wa}, \ref{wb} and \ref{wc}. 
We first study the behavior of a leaf in a canonical region having in its boundary at least two leaves inseparable to each other. 
By Proposition \ref{prop-principal} and the possible configurations \ref{wa}, \ref{wb} and \ref{wc} above, this boundary must contain a straight line $\nu_{a_i}$ or $\nu_{b_i}$, and a connected component of the level set $L_c$, where $c$ is the value that $p$ takes in $\nu_{a_i}$ or in $\nu_{b_i}$, respectively. 
We denote by $\gamma_c$ this connected component, which is given by the graphic of the function $N_c$ restricted to $(a_{i}, b_i)$. 
Observe that the straight lines $\nu_{a_i}, \nu_{b_i}$ are vertical asymptotes of $N_c$. 
\begin{lemma}\label{prop-comportamento}
Any leaf $\gamma$ in the canonical region whose boundary contains $\nu_{a_i}$ and $\gamma_c$ satisfies $|\nu_{a_i}, \gamma, \gamma_c|^{\pm}$ if $\lim\limits_{x\to {a_i}^{+}} N_c(x)=\pm \infty$. 
Any leaf $\gamma$ in the canonical region whose boundary contains $\nu_{b_i}$ and $\gamma_c$ satisfies $|\nu_{b_i}, \gamma, \gamma_c|^\mp$ if $\lim\limits_{x\to a^{-}_{i}}N_c(x)=\pm\infty$. 
\end{lemma}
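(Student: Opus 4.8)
The plan is to straighten the foliation inside the open strip by the change of coordinates $\Phi(x,y)=(x,p(x,y))$ and read off the cyclic orientation there, where the three leaves become two horizontal lines and a corner point. First I would reduce to the case $s>0$ on $(a_i,b_i)$: replacing $p$ by $-p$ does not change the foliation $\mathscr{F}(p)$ (the level sets, as sets, are the same), hence preserves all chordal relations, separatrices and canonical regions, while it turns $s$ into $-s$; moreover the leaf $\nu_{a_i}$ now sits at level $-c$ and the associated function is $(-c-(-r(x)))/(-s(x))=(c-r(x))/s(x)=N_c(x)$, so the one-sided limit in the hypothesis is unchanged. Thus we may assume $s>0$, and then the formula for $N_c$ in Lemma~\ref{submersion} gives $N_d-N_c=(d-c)/s$, so the leaves $\gamma_d$ are ordered monotonically by $d$ throughout the strip.

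Next I would pin down on which side of $\gamma_c$ the canonical region lies. Since $r(x)\to c$ and $s(x)\to 0^+$ as $x\to a_i^+$, we get $N_d(x)\to+\infty$ for $d>c$ and $N_d(x)\to-\infty$ for $d<c$. If $\lim_{x\to a_i^+}N_c(x)=+\infty$, then the region below $\gamma_c$ (the leaves $\gamma_d$ with $d<c$) fills every finite height as $x\to a_i^+$, so its closure contains the whole line $\nu_{a_i}$, whereas the region above $\gamma_c$ approaches $\nu_{a_i}$ only at $y=+\infty$; hence the canonical region whose boundary contains both $\nu_{a_i}$ and $\gamma_c$ is exactly the one made of leaves $\gamma_d$ with $d<c$ (irrespective of the configuration \ref{wa}, \ref{wb} or \ref{wc} at the $b_i$-end, which can only subdivide this side further without changing the sign of $c-d$). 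Symmetrically, $\lim_{x\to a_i^+}N_c(x)=-\infty$ forces the region to be $\{d>c\}$.

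Then I would compute the cycle sign in the flattened coordinates $(\xi,\eta)=\Phi(x,y)=(x,p(x,y))$. On the open strip $\Phi$ is an orientation-preserving diffeomorphism (its Jacobian is $s(x)>0$), carrying $\gamma_d$ to the horizontal line $\eta=d$, and a point $(a_i,y_1)$ of $\nu_{a_i}$ is approached, as $x\to a_i^+$ along height $y_1$, by points whose image $(\xi,p(\xi,y_1))$ tends to the corner $(a_i,c)$. Fixing $x_3\in(a_i,b_i)$, the representative leaf $\gamma=\gamma_d$ in the region, and $p_1=(a_i,y_1)$, $p_2=(x_3,N_d(x_3))$, $p_3=(x_3,N_c(x_3))$, I would build the Jordan curve whose $\Phi$-image traces the triangle with vertices $P_1=(a_i,c)$, $P_2=(x_3,d)$, $P_3=(x_3,c)$: the vertical side $P_2P_3$ pulls back to the vertical segment at $x=x_3$ (meeting $\gamma_d,\gamma_c$ only at $p_2,p_3$); the side reaching the corner pulls back to leave $p_1$ along the horizontal direction $y=y_1$, which satisfies $d<p(\cdot,y_1)<c$ near $a_i$ and so stays in the region and meets $\nu_{a_i}$ only at $p_1$; and the two remaining arcs can be chosen embedded and disjoint inside the open region $\{d<p<c\}$. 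Because $\Phi$ is orientation-preserving it maps positively oriented loops to positively oriented loops, so the sign of the cycle equals the orientation of that triangle, and the shoelace computation gives $2\,\mathrm{Area}=(c-d)(x_3-a_i)$, i.e. $\mathrm{sign}(c-d)$. By the previous paragraph this is exactly the sign of $\lim_{x\to a_i^+}N_c$, yielding $|\nu_{a_i},\gamma,\gamma_c|^{\pm}$, uniformly in the choice of $\gamma$ in the region.

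Finally the statement at $b_i$ follows by applying the established case to $\tilde p(x,y)=p(-x,y)$: the reflection $R(x,y)=(-x,y)$ is an orientation-reversing homeomorphism taking $b_i$ to the left endpoint $-b_i$ of the reflected strip, with associated function $N_c(-x)$, so the condition $\lim_{x\to b_i^-}N_c=\pm\infty$ becomes a left-endpoint limit equal to $\pm\infty$; since $R$ reverses the sign of every cycle, the $+$ produced at the left endpoint becomes the required $\mp$ (this also corrects the one-sided limit, which should read $x\to b_i^-$). The one genuine obstacle is the degeneration of $\nu_{a_i}$ under $\Phi$: the corner $(a_i,c)$ is the image of the whole line $\nu_{a_i}$ together with its ends at infinity, so one must verify that the return path approaches the corner along $\eta=p(\xi,y_1)$ and therefore lands back at the finite point $p_1$ rather than escaping to infinity; once this is arranged, the orientation bookkeeping in the flattened picture is immediate.
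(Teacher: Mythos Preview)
Your proof is correct and takes a genuinely different route from the paper's. The paper works entirely in the $(x,y)$-plane: it first notes that a leaf $\gamma$ in the canonical region between the inseparable pair $\nu_{a_i},\gamma_c$ cannot separate them, so the triple is cyclic; then, writing $\gamma$ as the graph of $N_d$ with $N_d\le N_c$, it observes that $\lim_{x\to a_i^+}N_d=-\infty$ is forced (otherwise $\gamma$ would separate $\nu_{a_i}$ from $\gamma_c$); finally it argues geometrically that any admissible Jordan loop $p_1p_2p_3p_1$ must be counterclockwise, since a clockwise loop would force $\gamma$ to cross it at least twice. No change of coordinates is invoked, and the remaining cases are declared analogous.

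Your approach instead linearises the strip via the orientation-preserving diffeomorphism $\Phi(x,y)=(x,p(x,y))$ after normalising $s>0$, so leaves become horizontal lines and the cycle sign reduces to the shoelace sign of the triangle with vertices $(a_i,c),(x_3,d),(x_3,c)$, namely $\operatorname{sign}(c-d)$. Coupled with your determination that the relevant canonical region lies on the side $d<c$ (respectively $d>c$) according to the sign of $\lim_{x\to a_i^+}N_c$, this gives the claim mechanically. The reflection $\tilde p(x,y)=p(-x,y)$ cleanly reduces the $b_i$-endpoint to the $a_i$-case, and your remark that the one-sided limit should read $x\to b_i^-$ catches a typo in the statement. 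What you gain is an explicit sign computation; what it costs is precisely the point you flag at the end---$\nu_{a_i}$ collapses under $\Phi$ to the single corner $(a_i,c)$, so the two arcs of the Jordan curve through $p_1$ must be chosen so that their $\Phi$-images both approach that corner from within $\{d<\eta<c\}$ (along the curve $\eta=p(\xi,y_1)$, as you say) while still closing up to an embedded loop in $(x,y)$. Once this is arranged, the orientation transfer is indeed immediate; the paper's argument avoids this bookkeeping at the price of leaving ``otherwise $\gamma$ crosses twice'' to geometric intuition.
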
 
\begin{proof} 
We prove the case where $\gamma$ is in the canonical region whose boundary contains $\nu_{a_i}$ and $\gamma_c$, and $\lim\limits_{x\to {a_i}^{+}} N_c(x)= + \infty$. 
All the other situations have a completely similar proof. 

The leaf $\gamma$ is in particular contained in $(\gamma_c, \nu_{a_i})$. 
Moreover, $\gamma$ cannot separates $\gamma_c$ and $\nu_{a_i}$, otherwise these two leaves will not be inseparable. 
So $|\nu_{a_i}, \gamma, \gamma_c|^+$ or $|\nu_{a_i}, \gamma, \gamma_c|^-$. 

Let $p_1 \in \nu_{a_i}$, $p_2\in \gamma$ and $p_3 \in \gamma_c$ and consider a Jordan closed curve $p_1p_2p_3p_1$ meeting these three leaves only in $\{p_1, p_2,p_3\}$ and oriented from $p_1$ to $p_2$ to $p_3$ to $p_1$. 

We know that the leaf $\gamma$ is also the graphic of a function $N_d$ defined in $(a_{i}, a_{i+1})$ with $\lim\limits_{x\to a_i^+} N_d(x) = \pm \infty$ and such that $N_d(x) \leq N_c(x)$ for all $x \in (a_i, a_{i+1})$. 
We must have $\lim\limits_{x\to a_i^+} N_d(x) = - \infty$, otherwise $\nu_{a_i}$ and $\gamma_c$ will not be inseparable. 
So the curve $p_1 p_2 p_3 p_1$ is counterclockwise oriented, otherwise $\gamma$ will cut this curve at least twice. 
So $|\nu_{a_i}, \gamma, \gamma_c|^+$. 
\end{proof} 

\subsection{Configuration \ref{wa}}
If $a_{i} = -\infty$, let $c_{b_i}$ be the value taken by $p$ in $\nu_{b_i}$ and $\gamma_{c_{b_i}}$ be the leaf given by the only connected component of the level set $L_{c_{b_i}}$ in $(a_{i}, b_i)$. 
This curve separates the plane into two open regions. 
One of them does not contain any separatrix and so will determine one of the canonical regions. 
A given leaf $\gamma$ in this canonical region has just one possible behavior. 
Precisely, by considering $N_d(x)$ the function whose graphic is $\gamma$, $\lim\limits_{x\to b_i^-}N_d(x) = \lim\limits_{x \to b_i^-}N_{c_{b_i}}(x)$. 
The other canonical region is contained in the opposite open region. 
It has as boundary $\gamma_{c_{b_i}}$ and $\nu_{b_i}$, so by Lemma \ref{prop-comportamento} it follows that $|\nu_{b_i}, \gamma, \gamma_{c_{b_i}}|^\mp$ if $\lim\limits_{x\to {b_i}^{-}}N_{c_{b_i}}(x)=\pm\infty$. 
See the separatrix configuration of this strip in case $\lim\limits_{x\to b_i^-} N_{c_{b_i}}(x) = +\infty$ in (a) of Fig. \ref{possibilities3}. 
\begin{figure}[h!]
\begin{center}
\subfigure[$a_{i} = -\infty$]{\includegraphics[scale=0.7]{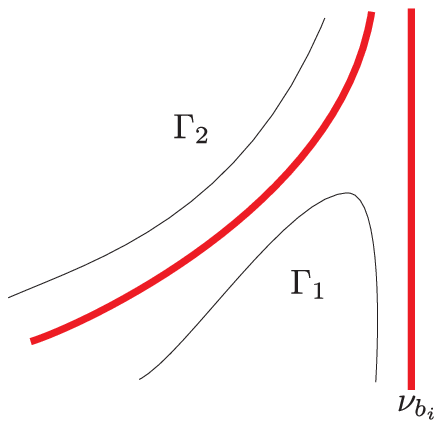}}\ \ \ \ \ \ \ \ \ \ \ \ \ \ \ \ \ \ \ \ 
\subfigure[$b_i = \infty$]{\includegraphics[scale=0.7]{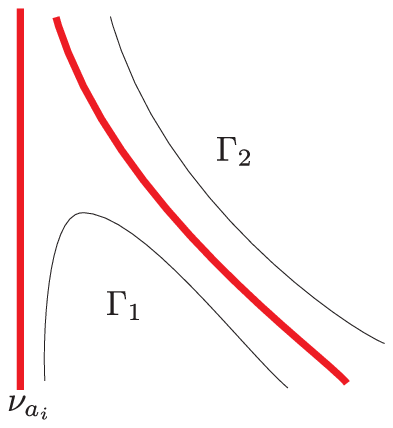}}
\end{center}
\caption{\small Separatrix configurations on the strip $(a_{i}, b_i] \times \R$ when $a_{i}=-\infty$ and on the strip $[a_{i}, b_i) \times \R$ when $b_i = +\infty$.}\label{possibilities3}
\end{figure}

If $b_i =\infty$, by letting $c_{a_i}$ the value $p$ takes in $\nu_{a_{i}}$, we have a completely analogous analysis: in the canonical region having as boundary the curve $\gamma_{c_{a_i}}$ there is only one possibility. 
While in the canonical region whose boundary is $\nu_{a_{i}}\cup \gamma_{c_{a_i}}$ it follows from Lemma \ref{prop-comportamento} that $|\nu_{a_{i}}, \gamma, \gamma_{c_{a_i}}|^\pm$ if $\lim\limits_{x\to  {a_{i}}^{+}}N_{c_{a_i}}(x)=\pm\infty$. 
See in (b) of Fig. \ref{possibilities3} a depiction in case $\lim\limits_{x\to {a_{i}}^{+}}N_{c_{a_i}}(x)=+\infty$. 

\subsection{Configuration \ref{wb}} 
Let $c_{a_i}$ and $c_{b_i}$ be the values taken by $p$ in $\nu_{a_{i}}$ and $\nu_{b_i}$, and $\gamma_{c_{a_i}}$ and $\gamma_{c_{b_i}}$ be the graphics of $N_{c_{a_i}}$ and $N_{c_{b_i}}$ in $(a_i, b_i)$, respectively. 

We assume $\lim\limits_{x\to a^{+}_{i}}N_{c_{a_i}}(x)=+\infty$, the other possibility is completely analogous. 
We have eight possibilities a priori for $\lim\limits_{x\to a_{i}^+} N_{c_{b_i}}(x)$, $\lim\limits_{x\to b_i^-} N_{c_{a_i}}(x)$ and $\lim\limits_{x\to b_{i}^-} N_{c_{b_i}}(x)$. 
But it is simple to conclude that, in order not to break the inseparability of $\nu_{a_{i}}$ with $\gamma_{c_{a_i}}$ and of $\nu_{b_i}$ with $\gamma_{c_{b_i}}$, as well as considering that different leaves cannot intersect each other, the only possibilities are the following four: \\ 
If $\lim\limits_{x\to a^{+}_{i}}N_{c_{b_i}}(x)=+\infty$, 
\begin{enumerate}[label={{{(\alph*)}}}]
\item $\lim\limits_{x\to b^{-}_{i}}N_{c_{a_i}}(x)=-\infty$ and $\lim\limits_{x\to b^{-}_{i}}N_{c_{b_i}}(x)=+\infty$, 
	
\item $\lim\limits_{x\to b^{-}_{i}}N_{c_{a_i}}(x)=-\infty$ and $\lim\limits_{x\to b^{-}_{i}}N_{c_{b_i}}(x)=-\infty$. 
\end{enumerate}
\noindent 
And, if $\lim\limits_{x\to a^{+}_{i}}N_{c_{b_i}}(x)=-\infty$, 

\begin{enumerate}[label={{{(\alph*)}}},resume]
	
\item $\lim\limits_{x\to b^{-}_{i}}N_{c_{a_i}}(x)=+\infty$ and $\lim\limits_{x\to b^{-}_{i}}N_{c_{b_i}}(x)=+\infty$, 
	
\item $\lim\limits_{x\to b^{-}_{i}}N_{c_{a_i}}(x)=+\infty$ and $\lim\limits_{x\to b^{-}_{i}}N_{c_{b_i}}(x)=-\infty$. 
\end{enumerate} 
See Figure \ref{possibilities}. 
\begin{figure}[h!]
\begin{center}
\subfigure[]{\includegraphics[scale=0.7]{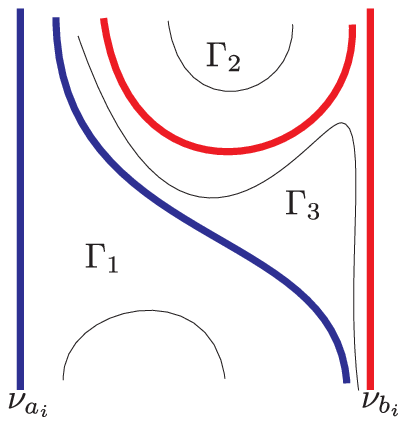}}\ \ \ \ \ \ \ \ \ \ \ \ \ \ \ \ \ \ \ \
\subfigure[]{\includegraphics[scale=0.7]{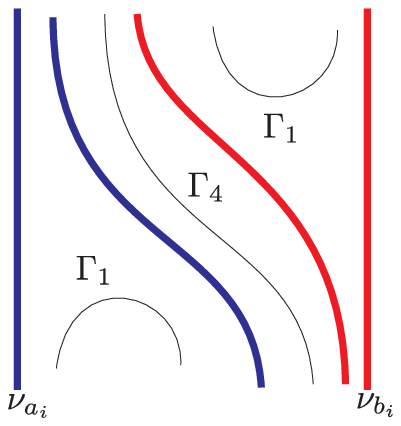}}\\
\subfigure[]{\includegraphics[scale=0.7]{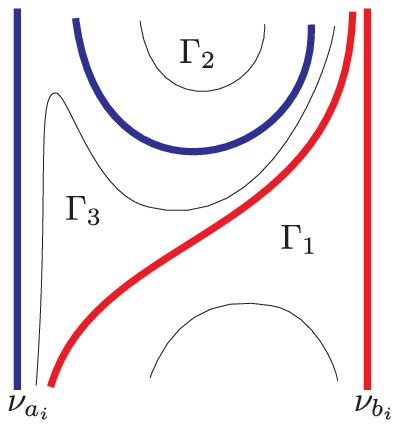}}\ \ \ \ \ \ \ \ \ \ \ \ \ \ \ \ \ \ \ \
\subfigure[]{\includegraphics[scale=0.7]{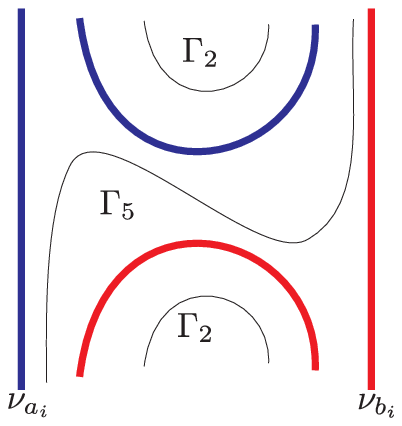}}	
\end{center}
\caption{\small Separatrix configurations in $[a_{i}, b_i] \times \R$ when $c_{a_i}\neq c_{b_i}$ in case $\lim\limits N_{c_{a_i}}(x) = + \infty$. 
The blue curve is $\gamma_{c_{a_i}}$ and the red one is $\gamma_{c_{b_i}}$. 
Completely analogous situations will happen when $\lim\limits N_{c_{a_i}}(x) = - \infty$.}\label{possibilities}
\end{figure} 
Now we have to add one leaf in each of the three canonical regions for each of these four cases. 

For the canonical regions having in their boundaries at least two inseparable leaves, the only possibility is given by Lemma \ref{prop-comportamento} and the fact that the leaves are graphics of functions: see regions labeled as $\Gamma_1$, $\Gamma_3$ and $\Gamma_5$ in Fig. \ref{possibilities}. 
Each of the other canonical regions in (a), (b), (c) and (d) can be of two types: or it has only one leaf in its boundary, labeled as $\Gamma_2$ in Fig. \ref{possibilities}, or it has exactly two leaves that are not inseparable to each other in its boundary, labeled as $\Gamma_4$ in Fig. \ref{possibilities}. 
In both cases, a leaf in such a region, being the graphic of a function defined in $(a_{i}, a_i)$, must follows the behavior of $N_{c_{a_i}}$ and $N_{c_{b_i}}$. 
So the only possibility is as given in each case of Fig. \ref{possibilities}. 

\subsection{Configuration \ref{wc}}
Let $c_{a_i}$ be the common value $p$ attains in $\nu_{a_{i}}$ and in $\nu_{b_i}$, and let $\gamma_{c_{a_i}}$ be the graphic of $N_{c_{a_i}}$ in $(a_{i}, b_i)$. 
We assume $\lim\limits_{x\to a^{+}_{i}}N_{c_{a_i}}(x)=+\infty$. 
We distinguish two cases: 
\begin{enumerate}[label={{{(\alph*)}}}]
\item\label{p11} $\lim\limits_{x\to b^{-}_{i}}N_{c_{a_i}}(x)=+\infty$, which is equivalent to $|\nu_{a_{i}}, \gamma_{c_{a_i}}, \nu_{b_{i}}|^-$, 
\item\label{p12} $\lim\limits_{x\to b^{-}_{i}}N_{c_{a_i}}(x)=-\infty$, which is equivalent to $\nu_{a_{i}}| \gamma_{c_{a_i}}| \nu_{b_{i}}$. 
\end{enumerate}
(With the assumption $\lim\limits_{x\to a^{+}_{i}}N_{c_{a_i}}(x)=-\infty$, the $+$ and $-$ in \ref{p11} and in \ref{p12} must be exchanged and the reasoning is similar.)

In case \ref{p11}, Lemma \ref{prop-comportamento} and the fact that leaves are graphics of functions provide the behavior of any leaf in the canonical region bounded by $\nu_{a_{i}}$, $\gamma_{c_{a_i}}$ and $\nu_{b_{i}}$. 
The other canonical region has only $\gamma_{c_{a_i}}$ as its boundary, so any leaf there has its behavior as $\gamma_{c_{a_i}}$. 
See (a) of Fig. \ref{possibilities2}. 
\begin{figure}[h!]
\begin{center}
\subfigure[]{\includegraphics[scale=0.7]{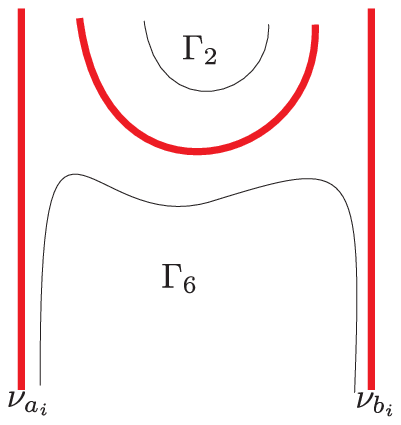}}\ \ \ \ \ \ \ \ \ \ \ \ \ \ 
\subfigure[]{\includegraphics[scale=0.7]{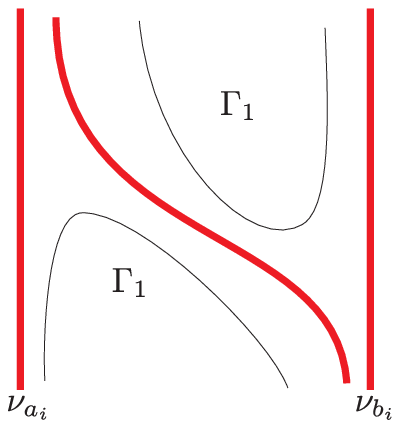}}
\end{center}
\caption{\small Separatrix configurations in the strip $[a_{i}, b_i]\times \R$ when $p$ attains the same value $c_{a_i}$ in $\nu_{a_{i}}$ and in $\nu_{b_i}$, in case $\lim\limits_{x\to a_{i}^+}N_{c_{a_i}}(x) = +\infty$. 
Completely similar situations occur when $\lim\limits_{x\to a_{i}^+}N_{c_{a_i}}(x) = -\infty$.}\label{possibilities2}
\end{figure}
In case \ref{p12}, both canonical regions will be determined by Lemma \ref{prop-comportamento}. 
See (b) in Fig. \ref{possibilities2}. 

So the separatrix configuration of $\mathscr{F}(p)$ in each strip $[a_{i}, b_i] \times \R$ is completely characterized by the ``asymptotic'' behavior of the separatrices in $(a_{i}, b_i) \times \R$, i.e., in order to determine it we only need to know whether these separatrices, which are graphics of functions $N_c(x)$ defined in $(a_i, b_i)$, where $c$ is the value $p$ takes in $\nu_{a_{i}}$ and/or $\nu_{b_i}$, are such that the limit when $x$ tends to $a_{i}$ and/or $b_i$ is $+\infty$ or $-\infty$. 
So in some sense the separatrix configuration in a strip region is determined by the configuration of the separatrices in it: the behavior of the ordinary leaves in each canonical region are determined by the behavior of the separatrices according to the configurations \ref{wa}, \ref{wb} and \ref{wc}. 
A similar behavior may not happen for functions other than linear-like ones; indeed, in Example \ref{676767} we present a submersion function where there are ordinary leaves ``misbehaving'', i.e. they do not respect Lemma \ref{prop-comportamento}. 

\smallskip

Therefore, in order to determine the separatrix configuration of $\mathscr{F}(p)$ for a given linear-like submersion $p$, we first find the pairwise disjoint open intervals $(a_i, b_i)$ such that $\R \setminus Z_p = \cup_{i} (a_i, b_i)$. 
Then we analyze the asymptotic behavior of the functions $N_c(x)$ in the intervals $(a_i, b_i)$ where $c$ is the value that $r$ takes in $a_i$ and/or $b_i$. 
Since $\nu_{a_i}$ and $\nu_{b_i}$ are vertical asymptotes of $N_c(x)$, it follows that the limit of $N_c(x)$ when $x\to a_i^+$ (resp. $x\to b_i^-$) is $\pm \infty$ if and only if the signal of $N_c(x)$ with $x>a_i$ close to $a_i$ (resp. $x<b_i$ close to $b_i$) is $\pm$. 
According to configurations \ref{wa}, \ref{wb} and \ref{wc} this will determine the separatrix configuration in each strip $[a_i, b_i]\times\R$. 

Further, we consider the straight lines $\nu_z$, where $z\in \overline{\cup_{i}\{a_i, b_i\}}\setminus \cup_i \{a_i, b_i\}$, which will be the limit separatrices. 
Finally in each connected component of the complement of $\overline{\cup_{i}(a_i, b_i)}$, we take an $x$. 
It follows that each canonical region away from $\cup_i (a_i, b_i)\times \R$ contains a straight line $\nu_x$. 
Hence our separatrix configuration is complete. 

We end this section by determining the bifurcation set of a finite linear-like submersion $p$. 
In this case, by writing $Z_p = \{a_1, a_2,\ldots, a_k\}$, $a_i<a_{i+1}$, $i=1,\ldots, k-1$, we have that the intervals that cover $\R\setminus \Z_p$ as above are of the form $(a_i, a_{i+1})$, $i= 0, \ldots, k$, where $a_0 = -\infty$ and $a_{k+1} = \infty$. 

\begin{theorem}\label{bifurcation}
Let $p(x,y) = r(x) +s(x) y$ be a finite linear-like submersion. 
Then $B(p) = r(Z_p)$. 
\end{theorem}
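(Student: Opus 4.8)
The plan is to establish the two inclusions $r(Z_p) \subseteq B(p)$ and $B(p) \subseteq r(Z_p)$ separately, using the explicit fiber description coming from Lemma \ref{submersion} and the discussion of leaves following it. The starting point is the observation that, for each $c\in\R$, the fiber $L_c$ is the disjoint union of the $k+1$ graphs of $N_c$ over the intervals $(a_i,a_{i+1})$, $i=0,\dots,k$, together with the vertical lines $\nu_z$ for those $z\in Z_p$ with $r(z)=c$. Since $a_i$ and $a_{i+1}$ are vertical asymptotes of $N_c$, each such graph is a closed subset of its open strip tending to $\pm\infty$ at the endpoints, so the graphs over distinct strips and the lines $\nu_z$ are pairwise distinct connected components of $L_c$. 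I would record this component count first, as both inclusions rest on it.

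For $B(p)\subseteq r(Z_p)$, I would fix $c_0\notin r(Z_p)$ and, using the finiteness of $r(Z_p)$, choose $\varepsilon>0$ so that $J=(c_0-\varepsilon,c_0+\varepsilon)$ misses $r(Z_p)$. The key point is that $p$ is constant equal to $r(z)$ on each line $\nu_z$ (because $s(z)=0$), so $p^{-1}(J)$ contains no point whose $x$-coordinate lies in $Z_p$; hence $p^{-1}(J)$ is contained in the union of the open strips $(a_i,a_{i+1})\times\R$, where $s(x)\neq 0$. On each strip $(x,y)\mapsto(x,p(x,y))$ is a homeomorphism onto $(a_i,a_{i+1})\times\R$ with inverse $(x,c)\mapsto(x,N_c(x))$, so I would define a global trivialization $\Phi:L_{c_0}\times J\to p^{-1}(J)$ by $\Phi\big((x_0,N_{c_0}(x_0)),c\big)=(x_0,N_c(x_0))$, parametrizing $L_{c_0}$ by $x_0\in\R\setminus Z_p$. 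Using the joint continuity of $(x_0,c)\mapsto N_c(x_0)=(c-r(x_0))/s(x_0)$ on the open strips, $\Phi$ is a homeomorphism with inverse $(x,y)\mapsto\big((x,N_{c_0}(x)),p(x,y)\big)$ and satisfies $p\circ\Phi=\pi_J$, so $p$ is a locally trivial fibration at $c_0$ and $c_0\notin B(p)$. (When $Z_p=\varnothing$ this applies to every $c_0$, giving $B(p)=\varnothing=r(Z_p)$.)

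For $r(Z_p)\subseteq B(p)$, I would fix $c_0=r(z_0)\in r(Z_p)$ and again pick $\varepsilon>0$ with $(c_0-\varepsilon,c_0+\varepsilon)\cap r(Z_p)=\{c_0\}$. Then for $c$ in this interval with $c\neq c_0$ the fiber $L_c$ has exactly $k+1$ components (only the graphs), whereas $L_{c_0}$ has these $k+1$ graphs together with the nonempty finite family of lines $\nu_z$, $z\in Z_p$, $r(z)=c_0$, hence strictly more than $k+1$ components. As the number of connected components of a fiber is a homeomorphism invariant, it would be locally constant near $c_0$ were $p$ locally trivial there; this contradiction gives $c_0\in B(p)$. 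I expect the backward inclusion to be routine once the component count is in place; the main work, and the step most prone to sloppiness, is the forward inclusion, namely verifying that $p^{-1}(J)$ genuinely decomposes over the open strips (so that the vertical lines are excluded) and that the assembled map $\Phi$ and its inverse are continuous simultaneously across all strips and restrict to the identity on $L_{c_0}$.
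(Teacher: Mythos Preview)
Your proof is correct, and for the inclusion $r(Z_p)\subset B(p)$ it coincides with the paper's: both count components ($k+1$ for $c\neq c_0$ near $c_0$, at least $k+2$ for $c_0$) and conclude that local triviality fails.

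For the inclusion $B(p)\subset r(Z_p)$ you take a genuinely different, more elementary route. The paper argues indirectly through the foliation machinery: since $c\notin r(Z_p)$, none of the $k+1$ components of $L_c$ is a separatrix (Proposition~\ref{prop-principal}), so by Markus's characterization (Theorem~\ref{Markuschara}) each lies in a strip $[\beta_i^1,\beta_i^2]$ homeomorphic to $[0,1]\times\R$ with leaves going to vertical lines; the paper then asserts one can arrange these strip homeomorphisms to respect the values of $p$ and glues them into a trivialization. Your approach bypasses this entirely: you observe that $p^{-1}(J)$ avoids every $\nu_z$ (because $p(\nu_z)=r(z)\notin J$) and hence sits inside the union of open strips where $s\neq 0$, and there the change of variables $(x,y)\leftrightarrow(x,p(x,y))$ is an explicit homeomorphism, yielding the trivialization $\Phi$ in one stroke via the formula $N_c(x)=(c-r(x))/s(x)$. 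Your argument is self-contained and does not require Proposition~\ref{prop-principal} or Theorem~\ref{Markuschara}; the paper's route, while heavier, keeps the proof aligned with the foliation-theoretic viewpoint of the surrounding section and makes explicit the link between non-separatrix leaves and regular values. The step in the paper's proof where one ``assumes further'' that the strip homeomorphisms $h_i$ send $\{d\}\times\R$ to the component of $p^{-1}(d)$ requires a small reparametrization argument that you avoid altogether.
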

\begin{proof}
Let $Z_p = \{a_1, a_2,\ldots, a_k\}$ as above and $c \notin r(Z_p)$. 
By Lemma \ref{submersion} it follows that $p^{-1}(c)$ has $k+1$ connected components $\gamma_1, \ldots, \gamma_{k+1}$ such that $\gamma_i \subset (a_{i-1}, a_i) \times \R$, $i=1, \ldots k+1$, where $a_0= -\infty$ and $a_{k+1} = \infty$. 
Moreover, by Proposition \ref{prop-principal} it follows that none of the $\gamma_i$, $i=1, \ldots, k+1$, is a separatrix of $\mathscr{F}(p)$. 
So it follows from Theorem \ref{Markuschara} that for each $i = 1,\ldots, k+1$, there exist two leaves $\beta_i^1, \beta_i^2 \in \mathscr{F}(p)$ and $\varepsilon_i>0$ such that the closed region $[\beta_i^1, \beta_i^2]$ is mapped by a homeomorphism $h_i$ onto the strip $[c-\varepsilon_i, c + \varepsilon_i]\times \R$ such that the vertical lines correspond to the leaves of $[\beta_i^1, \beta_i^2]$, $i=1,\ldots, k+1$. 
We can assume further that $h_i^{-1}(\{d\}\times \R)$ is the connected component of $p^{-1}(d)$ in $[\beta_i^1, \beta_i^2]$. 
Let $\varepsilon = \min_{1\leq i\leq k+1}\{\varepsilon_i\}$ and define $h: F\times (c - \varepsilon, c + \varepsilon) \to p^{-1}(c-\varepsilon, c + \varepsilon)$, where $F = \{1,2,\ldots, k+1\} \times \R$, by $h((i,x),y) = h_i^{-1}(y,x)$. 
This is a homeomorphism and $p\circ h((i,x),y) = y$, proving by definition that $c\notin B(p)$, hence $B(p) \subset r(Z_p)$. 

Now let $c\in r(Z_p)$. 
By Lemma \ref{submersion}, $p^{-1}(c)$ has at least $k+2$ connected component whereas $p^{-1}(d)$ for any $d$ sufficiently close to $c$ has $k+1$ connected components. 
Therefore $c \in B(p)$ and hence $r(Z) = B(p)$. 
\end{proof}

Next corollary will use the terminology of M. Tib\u ar and A. Zaharia paper \cite{TZ}. 
\begin{corollary}\label{12091}
A finite linear-like submersion does not have the vanishing at infinity phenomenon. 
\end{corollary}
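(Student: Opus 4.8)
The plan is to deduce the corollary directly from Theorem \ref{bifurcation} together with the explicit description of the fibers furnished by Lemma \ref{submersion}. First I would recall, in the terminology of \cite{TZ}, what the vanishing at infinity phenomenon means at a bifurcation value $c_0$: it requires a connected component of the nearby fibers that escapes every compact set as the parameter tends to $c_0$, so that the change of topology at $c_0$ is caused by a piece of fiber lost at infinity rather than by a change taking place in the finite plane. The whole point is then to show that a finite linear-like submersion produces no such component.

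By Theorem \ref{bifurcation} one has $B(p) = r(Z_p)$, a finite set, so it suffices to examine the fibers near each value $c_0 = r(z)$, $z \in Z_p$. Writing $Z_p = \{a_1, \ldots, a_k\}$ as before, Lemma \ref{submersion} and the proof of Theorem \ref{bifurcation} give a complete picture: for $c$ near $c_0$ with $c \notin r(Z_p)$, the fiber $p^{-1}(c)$ has exactly $k+1$ connected components, each the graph of $N_c$ over one of the intervals $(a_{i-1}, a_i)$, whereas $p^{-1}(c_0)$ consists of the graphs of $N_{c_0}$ together with the vertical lines $\nu_z$ for every $z \in Z_p$ with $r(z) = c_0$. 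Thus, on passing to the special fiber, components are gained, not lost.

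The key step is to verify that this gain is entirely localized in the finite plane. I would observe that every graph of $N_c$ over $(a_{i-1}, a_i)$ is a closed subset of $\R^2$ diffeomorphic to $\R$: since $Z_p \neq \emptyset$, each zero of $s$ is non-simple with $r'\neq 0$ on it by Lemma \ref{submersion}, so $N_c$ has a genuine vertical asymptote at each finite endpoint, its graph runs off to $\pm\infty$ there and cannot accumulate at a finite exterior point. Consequently, as $c\to c_0$ these graphs converge on compact sets to the corresponding graphs of $N_{c_0}$, and none of the $k+1$ generic components is carried off to infinity. The extra components appearing in $p^{-1}(c_0)$ are exactly the vertical lines $\nu_z$, which are ordinary vertical lines of the finite plane; they are genuinely new at $c=c_0$ rather than limits of components of nearby fibers. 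Hence the passage from a generic fiber to $p^{-1}(c_0)$ is realized by the appearance of these finite lines, the exact opposite of a component vanishing at infinity, and the corollary follows.

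The hard part will not be the geometry, which is completely under control from the preceding results, but the bookkeeping needed to match this explicit picture to the formal definition of vanishing at infinity in \cite{TZ}: one must confirm that the sole source of non-triviality of the fibration near each $c_0$ is the appearance of the finite lines $\nu_z$, so that there is nothing left to be accounted for at infinity. Since this argument never uses whether the topological change at $c_0$ adds or removes components, it is robust to the precise reading of the definition; once the identification is made, no further argument is required.
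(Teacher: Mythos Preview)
Your approach is essentially the paper's: both use Theorem~\ref{bifurcation} and Lemma~\ref{submersion} to write down all connected components of $p^{-1}(c)$ explicitly and then track them as $c\to c_0$, concluding that none escapes to infinity. One small imprecision worth fixing: you say the vertical lines $\nu_z$ are ``genuinely new at $c=c_0$ rather than limits of components of nearby fibers,'' but in fact each $\nu_{a_i}$ with $r(a_i)=c_0$ \emph{is} contained in the Hausdorff limit of the adjacent graph components of $p^{-1}(c)$ (the graph of $N_c$ over $(a_{i-1},a_i)$ develops a vertical asymptote at $a_i$ only for $c\ne c_0$; as $c\to c_0$ that end straightens out and sweeps through the line $\nu_{a_i}$). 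The paper phrases this correctly by saying the generic component \emph{splits} into two or three components of $p^{-1}(c_0)$, one of them a vertical line, invoking the configuration analysis \ref{wa}--\ref{wc}. This does not damage your conclusion --- your key sentence ``none of the $k+1$ generic components is carried off to infinity'' is exactly the point --- but the ``new rather than limits'' remark should be dropped or rephrased.
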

\begin{proof}
Let $c_0 \in \R$. 
If $c_0 \notin B(p)$, it follows by Theorem \ref{bifurcation} and Lemma \ref{submersion} that $p^{-1}(c)$ has the same number of connected components, each of them given by the graphic of the function $N_c(x)$ in each interval $(a_i, a_{i+1})$. 
If $c\to c_0^+$ or $c\to c_0^-$, it follows that each connected component of $p^{-1}(c)$ will tend to the corresponding connected component of $p^{-1}(c_0)$. 

Now if $c_0\in B(p)$, it follows by our analysis in each configuration \ref{wa}, \ref{wb} and \ref{wc} that each connected component of $p^{-1}(c)$ will split into two or three connected component of $p^{-1}(c_0)$ (one of them being $\nu_{a_i}$, for some $a_i\in Z_p$) or will tend to a connected component of $p^{-1}(c_0)$, when $c\to c_0^+$ or $c\to c_0^-$. 
So there are no connected components of $p^{-1}(c)$ vanishing at infinity when $c\to c_0^{\pm}$. 
\end{proof}

\section{Examples}\label{section:examples} 
We gather in this section the examples of the paper. 
We provide details of the just mentioned examples throughout the paper, as well as we present some other linear-like submersions $p$ and its foliation $\mathscr{F}(p)$. 
We include examples of linear-like submersions $p$ where the set $Z_p$ is not discrete. 
In this case, it can appear limit separatrices. 

We begin by detailing the examples of the introduction section. 

\begin{example}\label{degree2}
Let $p$ and $q$ be the functions defined in \eqref{ex122}. 
They are submersions because the derivatives of $r_1(x) = x (7 x - 5)$ and $r_2(x) = 2 x (4 x - 5)$ are not zero in the set $Z_p = Z_q = \{-1,0,2\}$, according to Lemma \ref{submersion}. 

We calculate in details the separatrix configuration of $\mathscr{F}(p)$ and let to the reader the calculations on $\mathscr{F}(q)$. 
According to Section \ref{section:linear} we have to study the asymptotic behavior of the functions $N_c(x)$, defined in Lemma \ref{submersion} for $p$, for $c= r_1(-1) = 12$, $r_1(0) = 0$ and $r_1(2) = 18$, on the domains $(-\infty, -1) \cup (-1,0)$, $(-1,0)\cup (0,2)$ and $(0,2) \cup (2, \infty)$, respectively. 
In order to do that, we only need to know the signals of $N_{12}(x)$ close to $x=-1$ and  close to $0$ (with $x<0$); of $N_{0}(x)$ close to $x=-1$ (with $x>-1$), close to $x=0$ and close to $x=2$ (with $x<2$); and of $N_{18}(x)$ close to $x=0$ (with $x>0$), and close to $x=2$. 
We conclude that the behaviors are like the ones in (a) of Fig. \ref{figura}. 
\begin{figure}[htpb]
\begin{center}
\subfigure[$S\mathfrak{S}_{\mathscr{F}(p)}$]{\includegraphics[scale=.58]{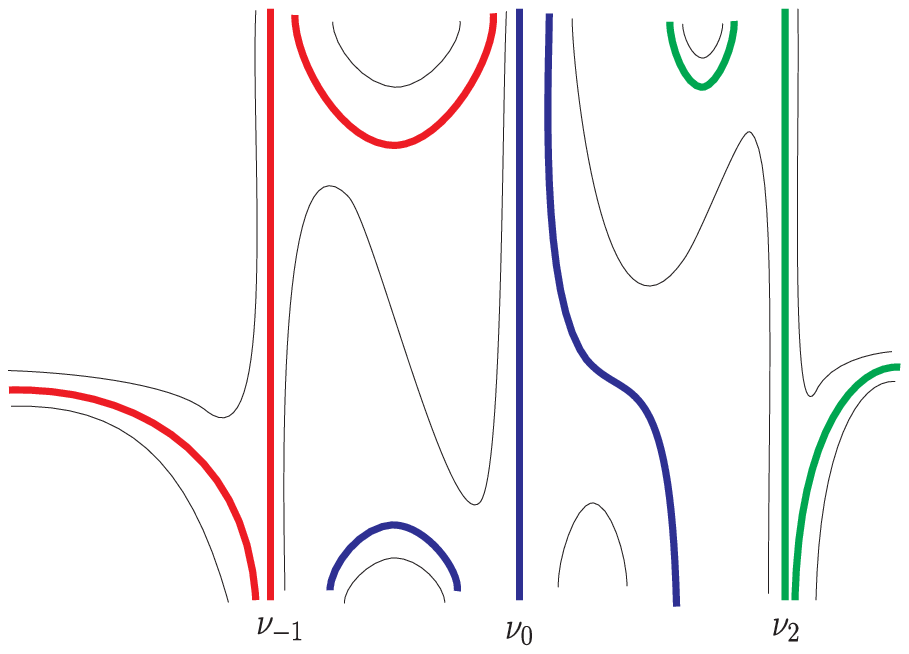}}
\ \ \  \ \ \ \ \
\subfigure[$S\mathfrak{S}_{\mathscr{F}(q)}$]{\includegraphics[scale=.58]{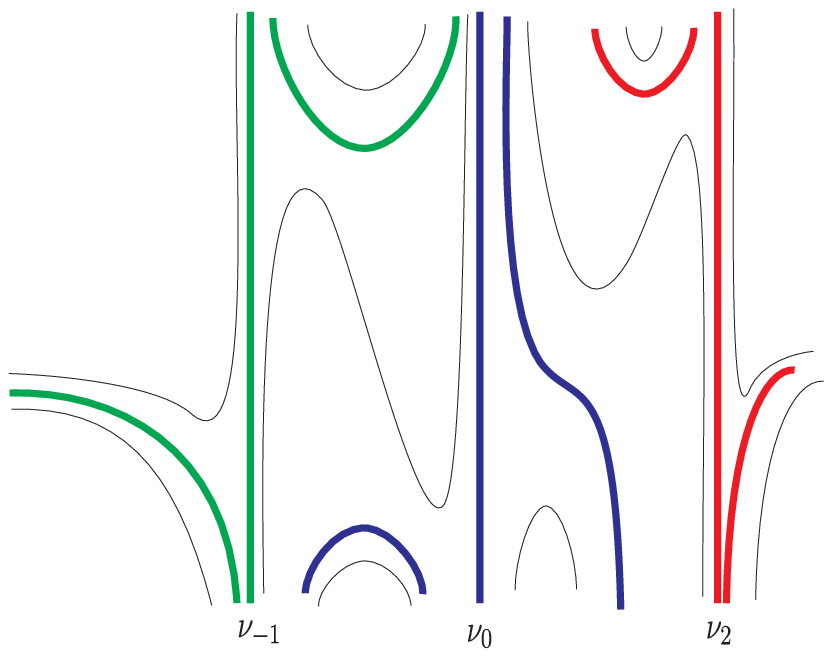}}
\end{center}
\caption{\small Separatrix configurations of $\mathscr{F}(p)$ and $\mathscr{F}(q)$. In red we depict the separatrices contained in the level set $L_{12}$, in blue the ones contained in $L_0$ and in green the ones in $L_{18}$.}\label{figura}
\end{figure}
According to Section \ref{section:linear} the ordinary leaves in each canonical region are automatically determined and so we get the separatrix configuration of $\mathscr{F}(p)$. 
Analogously, the reader can conclude that the separatrix configuration of $\mathscr{F}(q)$ is as presented in (b) of Fig. \ref{figura}. 

After a glance in $S\mathfrak{S}_{\mathscr{F}(p)}$ and $S\mathfrak{S}_{\mathscr{F}(q)}$ in Fig. \ref{figura}, we readily conclude that they are isomorphic and so, by Theorem \ref{teo-markus}, $\mathscr{F}(p)$ and $\mathscr{F}(q)$ are o-topologically equivalent. 

We now prove that $p$ and $q$ cannot be topologically equivalent. 

By Theorem \ref{bifurcation}, we have that $B(p) = B(q) = r_1(Z_p) = r_2(Z_q) = \{0, 12, 18\}$. 
If $p$ and $q$ were topologically equivalent, then by Corollary \ref{coro:main} there would exist an equivalence homeomorphism $h: \R^2 \to \R^2$ and a monotone bijection $\sigma: B(p)\to B(q)$ such that $p^{-1}(\sigma^{-1}(c)) = h^{-1} (q^{-1}(c))$, $c = 0, 12, 18$. 
Since $h$ carries leaves inseparable to each other onto leaves inseparable to each other and preserves chordal relations of triples, it follows that the set $\{\nu_{-1}$, $\nu_0, \nu_2\}$ must be carried onto itself, keeping  $\nu_0$ fixed. 
Since $\sigma$ is monotone, it then follows that $h(\nu_{-1}) = \nu_{2}$, $h(\nu_2) = \nu_{-1}$, and so $\sigma$ is the identity. 

Now, $h$ must take the two separatrices in the region bounded by $\nu_{-1}$ and $\nu_0$ in (a) of Fig. \ref{figura}  onto the two separatrices contained in the region bounded by $\nu_{0}$ and $\nu_2$ in (b) of Fig. \ref{figura}, keeping the colors. 
But then the cyclic triple formed by $\nu_{-1}$, $\nu_0$ and the blue separatrix between them in (a) of Fig. \ref{figura} would be taken to the non-cyclic triple $\nu_0$, $\nu_2$ and the blue separatrix between them in (b) of Fig. \ref{figura}, a contradiction because $h$ must preserve chordal relations. 
Therefore, $p$ and $q$ cannot be topologically equivalent. 

A final comment in this example is that for any $c\in \R$ the number of connected components of $p^{-1}(c)$ and $q^{-1}(c)$ is the same: for $c=0, 12, 18$, this number is $5$, and for any different $c$ this number is $4$. 
In particular the Euler characteristics of the generic fiber of $p$ and $q$ are the same. 
\end{example}

\begin{example}\label{dd45}
Let $p: \R^2 \to \R$ be the function given by 
$$
p(x,y) = x + x^3 y. 
$$ 
It is simple to see that this is a submersion. 
Acting as in Example \ref{degree2}, we obtain the separatrices of $\mathscr{F}(p)$ and $\mathscr{F}(-p)$, that are the same, by taking $\nu_0$ and the graphic of the function $N_0(x) = -x/x^3=-1/x^2$, with $x\neq 0$. 
The ordinary leaves are automatically given, according to Section \ref{section:linear}. 
It is clear that $\mathscr{F}(p)$ and $\mathscr{F}(-p)$ are o-topologically equivalent. 

On the other hand, we \emph{claim that $p$ and $-p$ cannot be topologically equivalent}. 
Indeed, the difference between the functions is exactly in their ordinary leaves, given by connected components of the level sets $L_c$ with $c\neq 0$: the level set $L_c$ of $p$ is the level set $L_{-c}$ of $-p$, see Fig. \ref{fig:fol-noeq}.  
\begin{figure}[h]
\begin{center}
\subfigure[$S\mathfrak{S}_{\mathscr{F}(p)}$]{\includegraphics[scale=0.60]{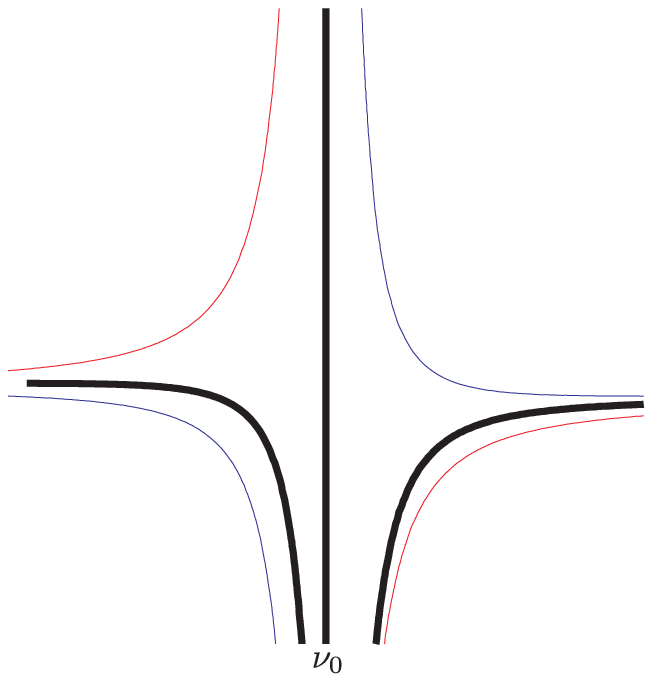}}\ \ \ \ \ \ \ \ \ \ \ \ \ 
\subfigure[$S\mathfrak{S}_{\mathscr{F}(-p)}$]{\includegraphics[scale=0.60]{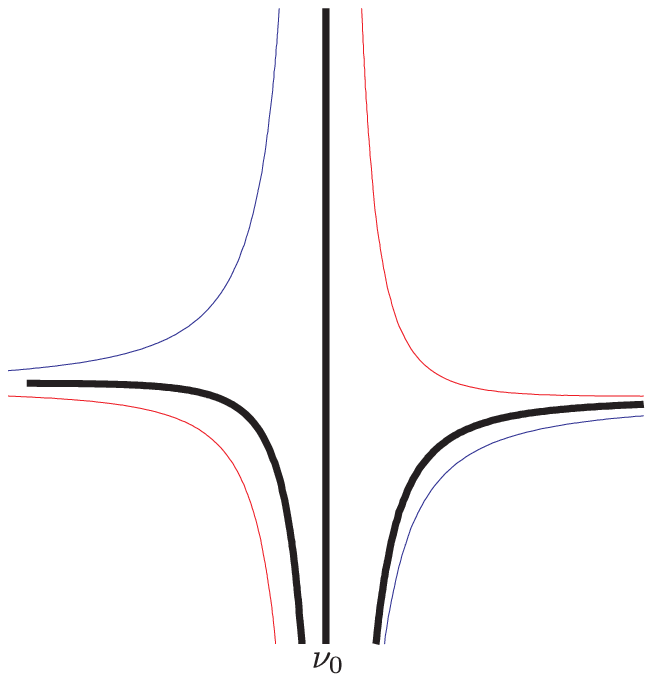}}
\end{center}
\caption{\small Separatrix configurations of $\mathscr{F}(p)$ and $\mathscr{F}(-p)$. 
The black lines are the connected components of the level set $L_0$, the inseparable leaves; the red ones are the connected component of a level set $L_t$, $t<0$, and the blue ones are connected components of a level set $L_t$ for $t>0$.}\label{fig:fol-noeq}
\end{figure} 
If there exist preserving orientation homeomorphisms $\ell: \R\to\R$ and $h: \R^2 \to \R^2$ such that $\ell\circ (-p) = p\circ h$ it follows in particular that $\ell(0) = 0$ (because $h$ sends separatrices onto separatrices) and so $h$ must take positive (resp. negative) level sets of $p$ onto positive (resp. negative) ones of $-p$. 
But since $h$ keeps chordal relations of triples, it must send so the cyclic triple formed by $\nu_0$, the connected components of $L_0$ to the left of $\nu_0$ and a red curve between them in (a) of Fig. \ref{fig:fol-noeq} onto the cyclic triple formed by $\nu_0$, the connected component of $L_0$ to the right of $\nu_0$ and a red curve between them in (b) of Fig. \ref{fig:fol-noeq}. 
But these cycles have reversed orientation. 
So $p$ and $-p$ cannot be o-topologically equivalent. 
\end{example}

The following example shows that the ``monotone'' assumption cannot be strengthened in corollaries \ref{coro:main}, \ref{varcorollary:main} and \ref{cor5.1}. 

\begin{example}\label{only} 
Let 
$$
p(x,y) = x (3 - 2 x) + (x - 1)^2 x^2 y, \ \ \ \ q(x,y) = (x - 1) (2 x - 1) - (x - 1)^2 x^2 y. 
$$
As above, these linear-like functions are submersions. 
Observe that 
$$
\ell\circ p = q, 
$$
with $\ell(t) = 1-t$, hence $p$ and $q$ are topologically equivalent. 

We have $Z_p = Z_q = \{0,1\}$, and $p(\nu_0) = q (\nu_1) = 0$ and $p(\nu_1) = q(\nu_0) = 1$. 
For the function $p$, by analyzing the signal of $N_0(x)$ near $0$ and in the interval $(0,1)$, as well as the signal of $N_1(x)$ in the interval $(0,1)$ and near $1$, we conclude, according to Section \ref{section:linear}, that the separatrix configuration of $\mathscr{F}(p)$ is as shown in (a) of Fig. \ref{ultima}. 
\begin{figure}[h]
\begin{center}
\subfigure[$S\mathfrak{S}_{\mathscr{F}(p)}$]{\includegraphics[scale=0.6]{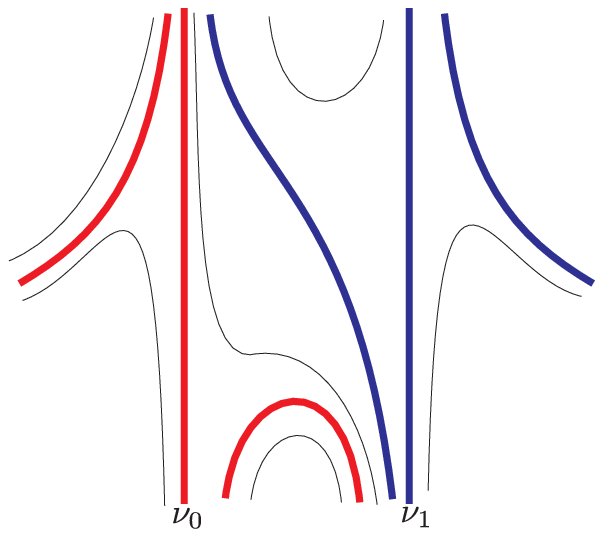}}\ \ \ \ \ \ \ \ \ \ \
\subfigure[$S\mathfrak{S}_{\mathscr{F}(q)}$]{\includegraphics[scale=0.6]{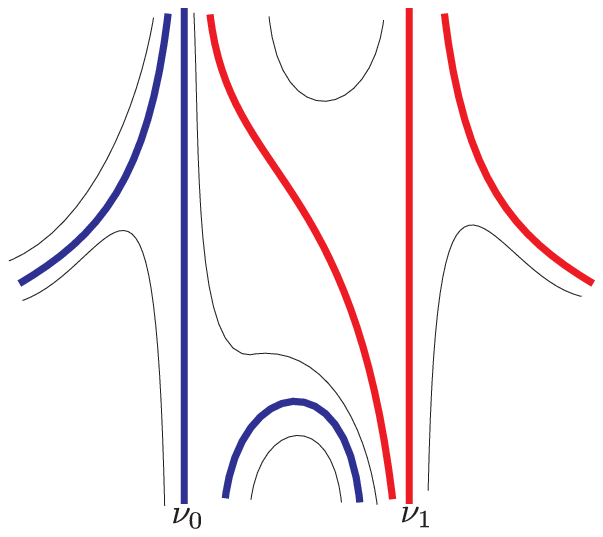}}
\end{center}
\caption{\small Separatrix configurations of $\mathscr{F}(p)$ and of $\mathscr{F}(q)$. 
In red there are connected components of level $0$ of $p$ and $q$. In blue, of level $1$.} 
\label{ultima}
\end{figure}
Analogously, for the function $q$, after analyzing the signal of $N_1(x)$ near $0$ and in the interval $(0,1)$, and also the signal of $N_0(x)$ in the interval $(0,1)$ and near $1$, we obtain the separatrix configuration of $\mathscr{F}(q)$ as depicted in (b) of Fig. \ref{ultima}. 

We claim that there are no \emph{increasing} bijection $\sigma:B(p) \to B(q)$ and no isomorphism nor anti-isomorphism $g$ between $S\mathfrak{S}_{\mathscr{F}(p)}$ and $S\mathfrak{S}_{\mathscr{F}(q)}$ such that 
$\sigma (p(\nu_i)) = q (g(\nu_i))$, for $i = 0,1$. 
So, in particular, condition (b) of corollaries \ref{coro:main}, \ref{varcorollary:main} and \ref{cor5.1} will not be satisfied with an increasing $\sigma$. 

Indeed, if $\sigma$ is increasing, then $\sigma(0) = 0$ and $\sigma(1) = 1$. 
In particular, an existing $g$ should send $\nu_0$ onto $\nu_1$, because it must preserve separability of leaves). 
Also, it should send $\alpha$, the inseparable leaf to $\nu_0$ contained in $(\nu_0, \nu_1)$, onto the inseparable leaf (related to $\mathscr{F}(q)$) to $\nu_1$ in $(\nu_0, \nu_1)$, because it should also preserve inseparable leaves. 
Analogously, it should send $\beta$, the inseparable leaf to $\nu_1$ contained in $(\nu_0, \nu_1)$ onto the inseparable leaf to $\nu_0$ in $(\nu_0, \nu_1)$. 
But $|\nu_0,\alpha,\beta|^+$ and $g(\nu_0)|g(\alpha)|g(\beta)$, hence $g$ cannot be an isomorphism or anti-isomorphism. 
\end{example}

Next example proves that away from the linear-like class only the behaviour of the separatrices does not characterize the foliation. 
\begin{example}\label{676767} 
Let 
$$
f(x,y) = \left(x - \arctan y +\pi \right) (2 x + \pi) \left(x - \arctan y \right) e^{2 y}. 
$$
First we show that this is a submersion. 
For each $y$, the partial derivatives $f_x (x,y)$ and $f_y (x,y)$ are polynomials of degrees $2$ and $3$ in $x$, respectively. 
The second one can be written as $(2x+\pi) g(x,y)$, where $g(x,y)$ is a polynomial of degree $2$ in $x$ for each $y$. 
We have $f_x (-\pi/2,y) \neq 0$ for all $y$. 
Further, the resultant, in $x$, between $f_x (x,y)$ and $g(x,y)$ is given, up to a factor of a positive function of $y$, and after substituting $y = \tan z$, by
$$
\pi^2 + \cos^4 z - 4 (z + \cos^2 z)^2,  
$$
which is a strictly positive function in the interval $(-\pi/2, \pi/2)$, and so there does not exists $y \in \R$ such that $f_x (x,y)$ and $g(x,y)$ have a common zero in $x$. 
Hence $f$ is a submersion. 

Clearly the straight line $\nu_{-\pi/2}$ and the curves $\gamma_1 = \{ (\arctan y - \pi, y)\ |\ y\in \R\}$ and $\gamma_2 = \{ (\arctan y,y)\ |\ y\in \R\}$ correspond to the three connected components of the level set $L_0$. 
Moreover, it is easy to see that $f$ is negative in the region to the left of $\gamma_1$ and in the region $(\nu_{-\pi/2}, \gamma_2)$, and it is positive in the other two connected components of $\R^2\setminus L_0$. 
Further, $f$ assumes all the negative values in the former two regions as well it assumes all the positive values in the last two ones. 
Now for any given $c\in \R$, $c\neq 0$, we observe that for fixed $y$ the discriminant of the polynomial of degree $3$ $x\mapsto f(x,y) - c$ is positive (resp. negative) for $y>0$ (resp. $y<0$) with $|y|$ big enough. 
It follows that for $y>0$ (resp. $y<0$) with $|y|$ big enough, the equation $f(x,y) = c$ has three (resp. one) real solutions. 
So, following the ideas of the proof of \cite[Lemma 3.5.]{BS} (with the signal of the discriminant being the opposite of $D$), we get that for any $c\neq 0$, the level set $f^{-1}(c)$ has two connected components. 
In particular, in the regions $(\gamma_1,\nu_{-\pi/2})$ and $(\nu_{-\pi/2}, \gamma_2)$ there is exactly one connected components of $L_c$ for each $c>0$ and for each $c<0$, respectively. 
Hence acting analogously as in the proof of Proposition \ref{prop-principal}, it follows that $\gamma_1$ is inseparable to $\nu_{-\pi/2}$, as well as $\nu_{-\pi/2}$ is inseparable to $\gamma_2$. 

Since, as observed right above, for any $c\neq 0$ the equation $f(x,y) = c$ for fixed $y$ has only one real solution if $y$ is negative enough, it follows that the ordinary leaves in $(\gamma_1, \nu_{-1/2})$ and in $(\nu_{-1/2}, \gamma_2)$ have its ends in the region $y>0$. 
Hence the separatrix configuration of $\mathscr{F}(f)$ with level sets is as shown in Fig. \ref{fig:exp-1}. 
\begin{figure}[h]
\begin{center}
\includegraphics[scale=0.6]{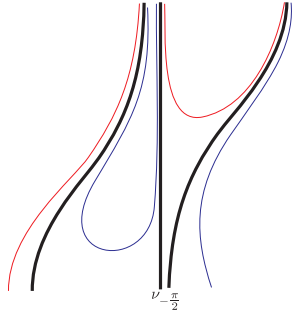}
\end{center}
\caption{\small Separatrix configuration of $\mathscr{F}(f)$. 
The inseparable leaves are drawn in black. 
In blue (resp. red), we draw ordinary leaves where $f$ assumes positive (resp. negative) values.} 
\label{fig:exp-1}
\end{figure} 
In particular, in the region $(\gamma_1, \nu_{-1/2})$, the behavior of the ordinary leaves are contrary to what it should be if we were in the linear-like case, because it does not respect Lemma \ref{prop-comportamento}. 
\end{example} 

Next example justifies why we ask that both $\ell$ and $h$ preserve orientation in our definition of o-topological equivalence of functions. 
\begin{example}
Let $p$ and $q$ be defined by 
$$
p(x,y) = x + (x+1)^2 x^2 (x-1)^2 y, \ \ \ \ q(x,y) = p(-x, y). 
$$
Following the ideas of Section \ref{section:linear} as above, it is simple to see that $p$ and $q$ are submersions such that the separatrix configurations of ${\mathscr{F}(p)}$ and of ${\mathscr{F}(q)}$ with level sets are as given in Fig. \ref{Ingridfara}. 
\begin{figure}[!h]
\begin{center}
\subfigure[$S\mathfrak{S}_{\mathscr{F}(p)}$]{\includegraphics[scale=0.6]{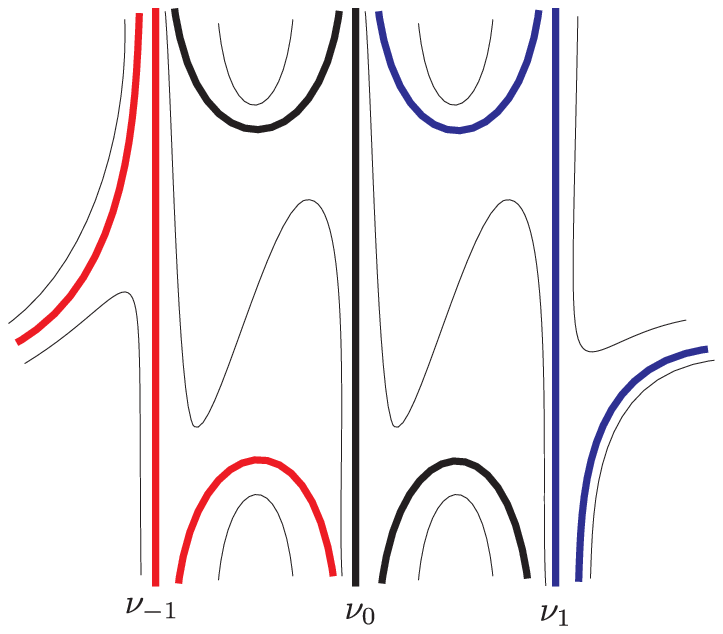}}\ \ \ \ \ \ \ \ \ \ \
\subfigure[$S\mathfrak{S}_{\mathscr{F}(q)}$]{\includegraphics[scale=0.6]{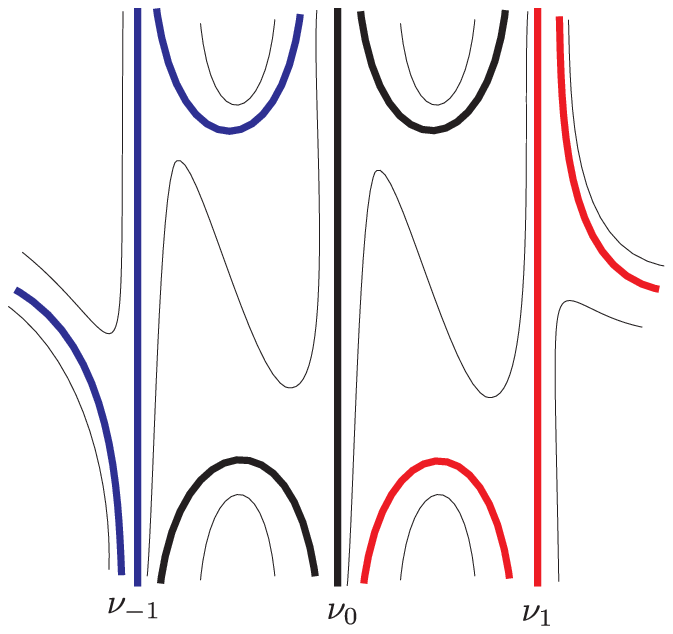}}
\end{center}
\caption{\small Separatrix configurations of $\mathscr{F}(p)$ and of $\mathscr{F}(q)$. 
Inseparable leaves where the functions assume the value $0$ are painted in black. 
In blue the inseparable leaves where the functions assume $1$ and in red, the ones where the functions assume $-1$.} 
\label{Ingridfara}
\end{figure} 

By taking $\ell(t) = t$ and $h(x,y) = (-x, y)$, we have 
$$
\ell\circ q = p\circ h. 
$$ 
Although, it is not difficult to verify that we can not define an isomorphism between $\mathfrak{S}_{\mathscr{F}(p)}$ and $\mathfrak{S}_{\mathscr{F}(q)}$: because if $g$ is such an isomorphism, we must have $g(\nu_0) = \nu_0$, also $g(\nu_{-1}) = \nu_{-1}$ or $g(\nu_{-1}) = \nu_{1}$; but for any of these two possibilities, it follows that cycles will have their orientation reverted. 
Therefore, the foliations $\mathscr{F}(p)$ and $\mathscr{F}(q)$ are not o-topologically equivalent. 

Since in our paper we are relating topological equivalence of functions with topological equivalence of the foliations given by them, it would be awkward to have $p$ and $q$ o-topologically equivalent together with $\mathscr{F}(p)$ and $\mathscr{F}(q)$ \emph{not} o-topologically equivalent. 
It is because of this that we ask that both $\ell$ and $h$ must preserve orientation in our definition. 
 \end{example}

Next example shows that all the possibilities presenting in configurations \ref{wa}, \ref{wb} and \ref{wc} can actually happen. 

\begin{example} 
For the cases in configuration \ref{wa}, all the examples above provide explicit linear-like submersions. 

For cases (a) and (d) of configuration \ref{wb}, the third and the second strips in (a) of Example \ref{degree2} given by $p$ provide explicit submersions. 
For case (c), let again $p$ of this example and define $q(x,y) = p(-x, y)$. 
The second strip of $\mathscr{F}(q)$ will provide an example. 
Finally, for case (b), let 
$$
p(x,y) = x (2- x^2) + (x-1)^2 (x+1)^2 y. 
$$ 
As above, it is simple to see that $p$ is a submersion such that the strip $[-1,1]\times \R$ has the same separatrix configuration as given in (b) of configuration \ref{wb}. 

For (a) and (b) of configuration \ref{wc}, we consider the polynomials  
$$
\begin{aligned}
p(x,y) & = x (x-1) + x^2(x-1)^2y,\\
q(x,y) & = x (1 - 2 x) (x-1) + x^2(x-1)^2y, 
\end{aligned}
$$ 
respectively. 
As above, it is simple to see that they are submersions and that their separatrix configuration in $[0,1]\times \R$ agree with the ones in (a) and (b) of configuration\ref{wc}, respectively. 
\end{example}

The following example exhibits linear-like submersions $p$ such that the set $Z_p$ is not discrete. 

\begin{example} 
Let $u: [0,\infty) \to \R$ be the $C^{\infty}$ function defined by $u(0) = 0$ and 
$$
u(x) = e^{-\frac{1}{x}}\sin^2\frac{\pi}{x}, \ x\neq 0, 
$$ 
and let $s_i: \R \to \R$, $i=1,2,3$, be the $C^{\infty}$ functions given by 
$$
s_1(x) =\left\{ \begin{array}{ll} 
u(x), & x \geq 0,\\
e^{\frac{1}{x}}, & x<0, 
\end{array}\right. \ \ \ \ 
s_2(x) = \left\{ \begin{array}{ll} 
	u(x), & x\geq0,\\
	u(-x), & x<0, 
\end{array}\right. \ \ \ \ 
s_3(x) = \left\{ \begin{array}{ll} 
	u(x), & x\geq 0, \\
	0, & x < 0. 
\end{array}\right. 
$$ 
We define 
$$
f_i(x,y) = x + s_i(x) y, \ i=1,2,3. 
$$ 
Denoting $\Z^*=\Z\setminus\{0\}$ and $\Z^+ = \Z\cap\{x\ |\ x>0\}$, we have 
$$
Z_{f_1} = \left\{\frac{1}{n}\ |\ n\in \Z^+ \right\}\cup \{0\}, \ \ \ \ Z_{f_2} = \left\{\frac{1}{n}\ |\ n\in \Z^* \right\}\cup\{0\}, \ \ \ \ Z_{f_3} = Z_{f_1}\cup\{x\ |\ x\leq 0\}. 
$$
In what follows we denote $1/0 = \infty$. 
By Lemma \ref{submersion} it follows that $f_i$, $i=1,2,3$, are submersions. 
Moreover, by Proposition \ref{prop-principal}, the straight lines $\nu_{1/n}$, $n\in Z^+$, together with the connected components of $L_{1/n}$ contained in $(1/(n+1),1/n)\cup(1/n, 1/(n-1))$, $n\in Z^+$, are inseparable leaves of $\mathscr{F}(f_i)$, $i=1,2,3$. 
In the case of $f_2$, all the straight lines $\nu_{-1/n}$, $n\in \Z^+$ are also inseparable leaves with the connected components of $L_{-1/n}$ contained in $(-1/(n-1),-1/n)\cup(-1/n, -1/(n+1))$, $n\in Z^+$. 

By applying Corollary \ref{92345}, it follows that $\nu_0$ is a limit separatrix in cases of $f_2$ and $f_3$. 
In case of $f_1$, $\nu_0$ is inseparable with a connected component of $L_0$ contained in $(-\infty, 0)$. 
In the case of $f_2$ there are inseparable leaves converging to $\nu_0$ from both ``sides'' of it. 
In the cases of $f_1$ and $f_3$, there are inseparable leaves converging from just one side. 
Further, in case of $f_3$, we observe that the foliation is trivial in the region $x<0$: the leaves are the straight lines $\nu_x$, $x<0$. 
Finally, we observe that in this last case, the limit-separatrix $\nu_0$ is the only leaf in the boundary of a canonical region, this does not happen in the other cases. 

To complete the separatrix configurations of $\mathscr{F}(f_i)$, $i=1,2,3$, we apply the results of Section \ref{section:linear}, obtaining that they are as depicted in Fig. \ref{ex1}. 
\begin{figure}[h!]
\begin{center}
\subfigure[$S\mathfrak{S}_{\mathscr{F}(f_1)}$]{\includegraphics[scale=0.55]{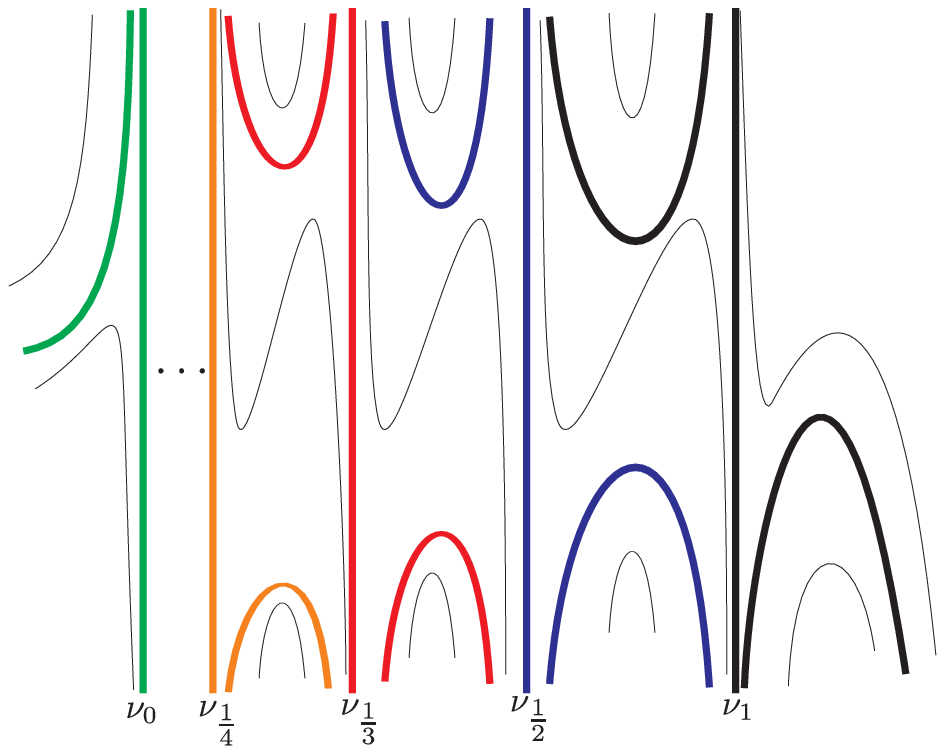}} \ \ \ \ \ \ \ \
\subfigure[$S\mathfrak{S}_{\mathscr{F}(f_2)}$]{\includegraphics[scale=0.55]{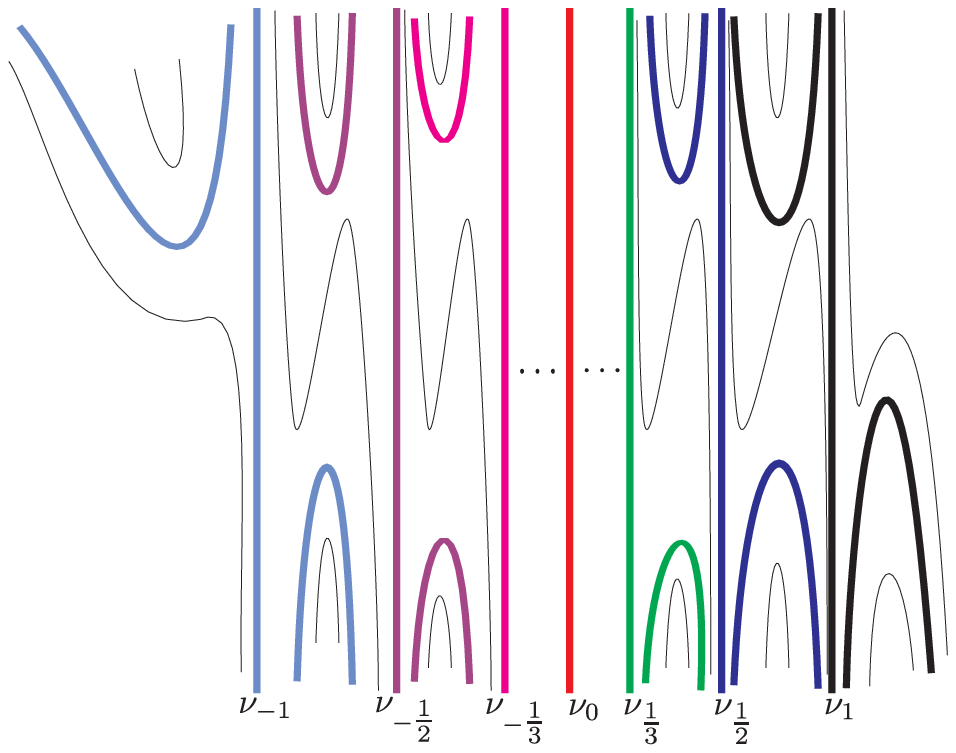}}
\subfigure[$S\mathfrak{S}_{\mathscr{F}(f_3)}$]{\includegraphics[scale=0.55]{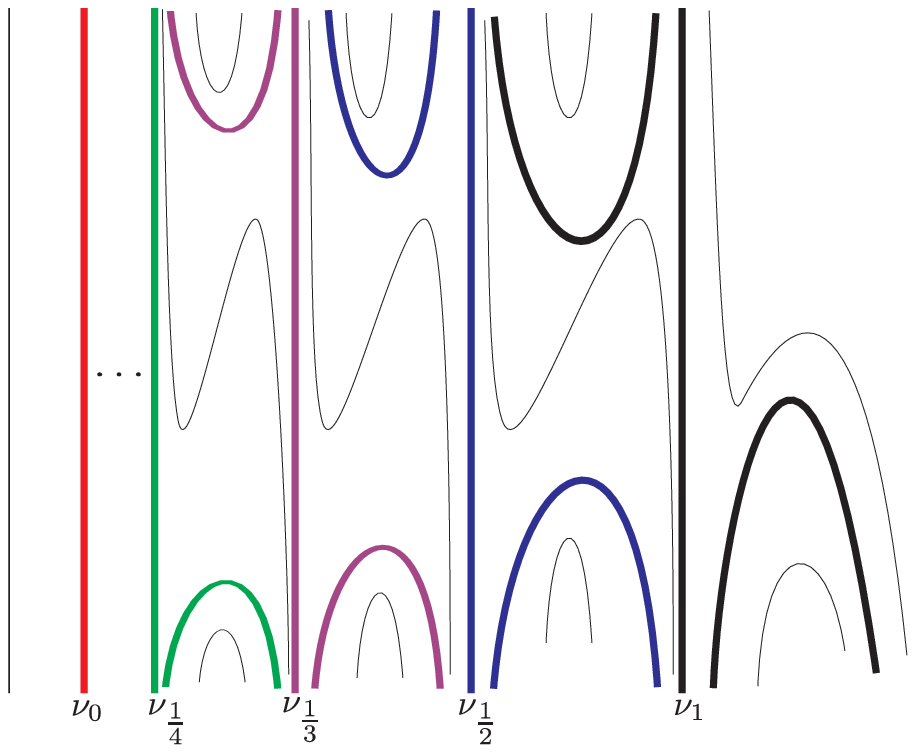}}
\end{center}
\caption{\small Separatrix configurations of the foliations $\mathscr{F}(f_i)$, $i=1,2,3$. 
Inseparable leaves are painted with the same color. 
In (a) there is exactly one curve inseparable with the straight line $\nu_0$ in the region $x<0$. 
In (b) and (c), $\nu_0$ is a limit separatrix. 
In (c), the region $x<0$ is trivial.}\label{ex1}
\end{figure} 
\end{example}

The last example provides a linear like submersion $f$ for any given closed subset $Z\subset \R$, such that $Z_f$ matches $Z$. 
\begin{example} 
Given $Z$ a closed set in $\R$, we know that $\R\setminus Z$ is the disjoint union of an enumerable quantity of disjoint open intervals of the form $(a_n, b_n)$, where one of the $a_n$ might be $-\infty$ and one of the $b_n$ might be $\infty$. 
In case $(a_n, b_n)$ is bounded, we define the $C^{\infty}$ function $s_n(x) = e^{1/\big((x-a_n) (x-b_n)\big)}$ for $x \in (a_n, b_n)$ and $s_n(x) = 0$ for $x$ away from $(a_n, b_n)$. 
For an interval of the form $(-\infty, b_n)$ we put $s_n(x) = e^{1/(x - b_n)}$ for $x\in (-\infty, b_n)$ and $s_n(x) = 0$ when $x$ is not in $(-\infty, b_n)$. 
Analogously, when $b_n = \infty$, we put $s_n(s) = e^{1/(a_n - x)}$ in $(a_n, b_n)$ and $0$ away from this interval. 
Then we consider $s(x) = \sum_{n=0}^{\infty} s_n(x)$. 
This is a $C^{\infty}$ function with zero set being exactly $Z$. 

By defining $f(x,y) = -x + s(x) y$ we will get a $C^{\infty}$ submersion by Lemma \ref{submersion}, because $s$ is flat in each $a_n$ and $b_n$. 
In each strip $[a_n, b_n]\times \R$ the foliation $\mathscr{F}(f)$ has separatrix configuration as (d) of configuration \ref{wc} if $[a_n, b_n]$ is bounded and as (a) (with $+$ and $-$ exchanged) and (b) of configuration \ref{wa} if $a_n = -\infty$ or $b_n = \infty$, respectively. 

By Corollary \ref{92345} it follows that for any $z\in \overline{\cup_i\{a_i, b_i\}}\setminus \cup_{i}\{a_i, b_i\}$, the straight line $\nu_z$ is a limit separatrix of $\mathscr{F}(f)$. 
Then we choose an $x$ in each connected component of the complement of $\overline{\cup_i(a_i, b_i)}\setminus \cup_{i}(a_i, b_i)$, and the curve $\nu_x$ will be an ordinary leaf completing the separatrix configuration of $\mathscr{F}(f)$. 
\end{example}

\section{Relation between topological equivalence of finite linear-like submersions and topological equivalence of the foliations given by them}\label{section:final}

As discussed in the introduction section, the topological (resp. o-topological) equivalence of submersion functions implies the topological (resp. o-topological) equivalence of the foliations given by them. 
The reciprocal result is not true even in the finite linear-like case, as examples in Section \ref{section:examples} show. 
In this section, we add conditions in order to obtain a reciprocal result in the finite linear-like class. 
The most general results we obtain is Theorem \ref{mmain} and its corollary. 
The results announced in the introduction section will be consequences of them, and are proved below. 

We need the following general lemma on regular foliations: 
\begin{lemma}\label{canonicas}
Let $\mathcal{A}$ be a canonical region of a regular foliation $\mathscr{F}$. 
The following properties hold for leaves $\gamma_1, \gamma_2, \alpha, \beta \in \mathscr{F}$ such that $\gamma_1, \gamma_2$ are contained in $\mathcal{A}$ and $\alpha, \beta$ are not contained in $\mathcal{A}$: 
\begin{enumerate}[label={{\textnormal{(\alph*)}}}]
\item\label{tyu} $\gamma_1|\gamma_2|\alpha$ or $\gamma_2|\gamma_1|\alpha$. 
\item\label{tyu2} The chordal relations of the triples $\gamma_1$, $\alpha$, $\beta$ and $\gamma_2$, $\alpha$, $\beta$ are the same. 
\end{enumerate} 
\end{lemma}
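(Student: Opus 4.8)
The plan is to reduce everything to one geometric fact: the open region between two leaves of a common canonical region contains no separatrix. First I would show that if $\gamma_1,\gamma_2\in\mathcal{A}$ then $(\gamma_1,\gamma_2)\subseteq\mathcal{A}$. Any leaf $\delta$ meeting $(\gamma_1,\gamma_2)$ is entirely contained in it, since leaves are pairwise disjoint and $\delta$ is disjoint from $\gamma_1\cup\gamma_2$; thus $\gamma_1|\delta|\gamma_2$. Were such a $\delta$ a separatrix it would disconnect $\gamma_1$ from $\gamma_2$ in $\R^2\setminus\mathfrak{S}_{\mathscr{F}}$, contradicting that both lie in the connected set $\mathcal{A}$. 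Hence $(\gamma_1,\gamma_2)\cup\gamma_1$ is a connected subset of $\R^2\setminus\mathfrak{S}_{\mathscr{F}}$ meeting $\mathcal{A}$, so it lies in $\mathcal{A}$. Since $\alpha,\beta\notin\mathcal{A}$ while $(\gamma_1,\gamma_2)\subseteq\mathcal{A}$, neither $\alpha$ nor $\beta$ meets the closed strip $[\gamma_1,\gamma_2]$; being connected leaves disjoint from this connected region, each lies in one of the two components of $\R^2\setminus[\gamma_1,\gamma_2]$, i.e.\ each of $\alpha,\beta$ lies \emph{beyond} $\gamma_1$ or \emph{beyond} $\gamma_2$. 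For $\alpha$ this dichotomy is exactly $\gamma_1|\gamma_2|\alpha$ or $\gamma_2|\gamma_1|\alpha$, proving \ref{tyu}.

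For \ref{tyu2} I would first settle the three separation questions. Because $[\gamma_1,\gamma_2]$ is connected and disjoint from $\alpha$, both $\gamma_1$ and $\gamma_2$ lie on the same side of $\alpha$; hence $\alpha$ separates $\gamma_1$ from $\beta$ iff it separates $\gamma_2$ from $\beta$, and symmetrically with the roles of $\alpha$ and $\beta$ interchanged. For whether $\gamma_i$ separates $\alpha$ and $\beta$, I would use the ``beyond'' classification from \ref{tyu}: writing $H_i'$ for the component of $\R^2\setminus\gamma_i$ not containing the strip, one has $\{\text{beyond }\gamma_1\}=H_1'$ and $\{\text{beyond }\gamma_2\}=H_2'\subseteq\R^2\setminus H_1'$, and $\gamma_1$ (resp.\ $\gamma_2$) separates $\alpha,\beta$ precisely when one of them is beyond $\gamma_1$ and the other beyond $\gamma_2$ — the same condition for $i=1$ and $i=2$. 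Thus all three ``who-separates-whom'' answers coincide for the triples $\gamma_1,\alpha,\beta$ and $\gamma_2,\alpha,\beta$; in particular one triple is cyclic iff the other is, and in the non-cyclic case the separating leaf and the relation are identical.

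It remains to match the orientation in the cyclic case, which I expect to be the main obstacle. Here I would join $\gamma_1$ to $\gamma_2$ by a continuous family of leaves $\{\gamma_t\}_{t\in[1,2]}\subseteq[\gamma_1,\gamma_2]$ together with a transversal arc $\lambda\subseteq\mathcal{A}$ meeting each $\gamma_t$ once at a point $p(t)$; such a family and transversal exist because, along the strip $[\gamma_1,\gamma_2]$, the canonical region carries the product normal form of non-separatrix leaves (cf.\ Theorem~\ref{Markuschara}). Applying the already-proved part \ref{tyu2} to the pairs $\gamma_1,\gamma_t$ shows that each triple $\gamma_t,\alpha,\beta$ is cyclic, so $|\gamma_t,\alpha,\beta|^{\pm}$ is defined for every $t$. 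Fixing $q\in\alpha$ and $r\in\beta$ and letting $p(t)$ trace $\lambda$ (which stays disjoint from $\alpha\cup\beta$), the Jordan cycle $p(t)\,q\,r\,p(t)$ deforms continuously through admissible cycles, so its orientation is a locally constant, hence constant, function of $t\in[1,2]$. Therefore $|\gamma_1,\alpha,\beta|^{\pm}$ and $|\gamma_2,\alpha,\beta|^{\pm}$ carry the same sign, completing \ref{tyu2}. The delicate points to be careful about are the continuous choice of the deforming Jordan curves and the justification of the product structure along the strip, which is precisely where the local normal form of non-separatrix leaves must be invoked.
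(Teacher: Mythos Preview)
Your argument is correct, but it differs from the paper's in two places worth noting.

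For the non-cyclic part of \ref{tyu2} you give a single uniform argument: since $[\gamma_1,\gamma_2]$ is connected and disjoint from $\alpha$ and from $\beta$, the answers to ``does $\alpha$ separate $\gamma_i$ from $\beta$'', ``does $\beta$ separate $\gamma_i$ from $\alpha$'', and ``does $\gamma_i$ separate $\alpha$ from $\beta$'' are independent of $i$. The paper instead assumes $\gamma_1|\gamma_2|\alpha$ and runs through the four possible chordal relations of $\gamma_1,\alpha,\beta$ one at a time, deducing the corresponding relation for $\gamma_2,\alpha,\beta$ by ad hoc region-chasing. Your version is shorter and more conceptual; the paper's is more elementary in that it never names the two components $H_1',H_2'$ of the complement of the strip.

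For the sign in the cyclic case the approaches genuinely diverge. The paper fixes a Jordan curve $p_1p_2p_3p_1$ realising $|\gamma_1,\alpha,\beta|^+$, observes that it must cross $\gamma_2$ in two points $q_1,q_2$ (since $\gamma_1|\gamma_2|\alpha$), and then slides $q_2$ along $\gamma_2$ to $q_1$ to produce a Jordan curve $q_1p_2p_3q_1$ of the same orientation witnessing $|\gamma_2,\alpha,\beta|^+$. This is a one-step surgery on a single curve and uses nothing beyond the definition of the chordal sign. Your route instead invokes Theorem~\ref{Markuschara} to get a product structure on $[\gamma_1,\gamma_2]$, builds a transversal family $\{\gamma_t\}$, and argues that the sign $|\gamma_t,\alpha,\beta|^{\pm}$ is locally constant in $t$. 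This is a legitimate strategy, but it imports more machinery: you need the product structure on the whole strip (not just locally around one leaf, which is what Theorem~\ref{Markuschara} literally gives), and you need to justify that the admissible Jordan cycles can be chosen to vary continuously with $t$ --- points you correctly flag as delicate. Both can be made rigorous, but the paper's cut-and-slide avoids them entirely.
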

\begin{proof}
If \ref{tyu} is not true, then $|\gamma_1, \gamma_2, \alpha|^{\pm}$ and hence, in particular, $\alpha$ is contained in $(\gamma_1, \gamma_2)$, which is an open set contained in $\mathcal{A}$, a contradiction. 

Now we prove \ref{tyu2}. 
By using Section 2.3 of Kaplan's paper \cite{K}, it is possible to deliver a very technical proof of this statement by applying the axioms and results on abstract chordal systems. 
We prefer to give a more self-contained proof, by using only our definitions in Section \ref{890jd}. 
By \ref{tyu} we can assume 
\begin{equation}\label{assump}
\gamma_1 | \gamma_2 | \alpha.
\end{equation} 
We have the following possibilities for $\gamma_1$, $\alpha$ e $\beta$: (i) $\gamma_1 | \alpha | \beta$, (ii) $\gamma_1 | \beta | \alpha$, (iii) $\alpha | \gamma_1 | \beta$ and (iv) $|\gamma_1, \alpha, \beta|^{\pm}$. 
Clearly \eqref{assump} and (i) is equivalent to \eqref{assump} and $\gamma_2 | \alpha | \beta$. 
In case (ii), since $\beta$ cannot be contained in $[\gamma_1, \gamma_2]$ it follows that $\beta$ is contained in $(\gamma_2, \alpha)$. 
If $\beta$ does not separate $\gamma_2$ and $\alpha$, we let $B$ be the open connected set whose boundary is $\gamma_2 \cup \beta \cup \alpha$. 
The open connected region given by $(\gamma_1, \gamma_2]\cup B$ have as boundary $\gamma_1 \cup \beta \cup \alpha$, a contradiction with the assumption $\gamma_1 | \beta | \alpha$ of this case. 
In case (iii), since nor $\alpha$ nor $\beta$ is contained in $[\gamma_1, \gamma_2]$, it follows that $\alpha$ and $\beta$ are in distinct connected components of $[\gamma_1, \gamma_2]^c$, hence in particular they are in distinct connected components of $\gamma_2^c$, i.e. $\alpha | \gamma_2 | \beta$. 
Finally, in case (iv), since nor $\alpha$ nor $\beta$ is contained in $[\gamma_1, \gamma_2]$, it follows by \eqref{assump} that both $\alpha$ and $\beta$ are contained in the connected component of $\gamma_2^c$ not containing $\gamma_1$. 
In particular $\alpha$ does not separate $\gamma_2$ and $\beta$ as well as $\gamma_2$ does not separate $\alpha$ and $\beta$. 
If $\beta$ separates $\gamma_2$ and $\alpha$, it follows in particular that the region $(\gamma_1, \gamma_2] \cup [\gamma_2, \beta)$ is an open connected region not containing $\alpha$, a contradiction with our assumptions in this case. 
So $\gamma_2, \alpha, \beta$ form a cyclic triple. 
If $|\gamma_1, \alpha, \beta|^+$, there exist $p_1\in \gamma_1$, $p_2\in \alpha$ and $p_3 \in \beta$ such that $p_1p_2p_3p_1$ is a positive Jordan curve as in the definition of $|\gamma_1, \alpha, \beta|^+$. 
In particular, this curve cuts the leaf $\gamma_2$ twice in distinct points $q_1, q_2$. 
By sliding $q_2$ over $\gamma_2$ until it meets $q_1$ we obtain a positively oriented Jordan curve $q_1p_2p_3q_1$ with the properties guaranteeing that $|\gamma_2, \alpha, \beta|^+$. 
Similarly $|\gamma_1, \alpha, \beta|^-$ implies $|\gamma_2, \alpha, \beta|^-$. 
\end{proof}

\begin{theorem}\label{mmain}
Let $p_i(x,y) = r_i(x) + s_i(x) y$, $i=1,2$, be two finite linear-like submersions. 
Then $p_1$ is o-topologically equivalent to $p_2$ if and only if $Z_{p_1} = Z_{p_2} = \emptyset$ or each of the following conditions hold:
\begin{enumerate}[label={\textnormal{(\alph*)}}]
\item\label{cc1} $\mathscr{F}(p_1)$ and $\mathscr{F}(p_2)$ are o-topologically equivalent. 
\item\label{cc2} There exists an increasing bijection $\sigma: r_1(Z_{p_1})\to r_2(Z_{p_2})$ such that 
$$
\sigma(p_1(\nu_a)) = p_2(h(\nu_a)) 
$$ 
for each $a \in Z_{p_1}$, where  $h$ is an o-equivalence homeomorphism between $\mathscr{F}(p_1)$ and $\mathscr{F}(p_2)$. 
If $r_1(Z_{p_1})$ is a singleton, there further exist $m \notin r_1(Z_{p_1})$, an extension of $\sigma$ to an increasing bijection $\overline{\sigma}: r_1(Z_{p_1})\cup \{m\} \to r_2(Z_{p_2})\cup \{\overline{\sigma}(m)\}$ and a connected component $\xi$ of $p_1^{-1}(m)$ such that $\overline{\sigma}(m) = p_2(h(\xi))$. 
\end{enumerate}
\end{theorem}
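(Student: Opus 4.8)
Before splitting into the two implications I dispose of the degenerate case. If $Z_{p_1}=Z_{p_2}=\emptyset$ then each $s_i$ is nowhere zero and $(x,y)\mapsto(x,p_i(x,y))$ is a diffeomorphism conjugating $p_i$ to a coordinate projection; composing the two conjugations (and, if needed, a reflection to fix orientations) exhibits $p_1$ and $p_2$ as o-topologically equivalent. For the necessity of \ref{cc1}--\ref{cc2} assume $\ell\circ p_1=p_2\circ H$ with $\ell,H$ orientation preserving. Then $H$ carries each connected component of each level set of $p_1$ onto one of $p_2$, so it is an o-equivalence homeomorphism and \ref{cc1} holds. By Lemma \ref{prop-ida-1} and Theorem \ref{bifurcation}, $\ell\big(r_1(Z_{p_1})\big)=r_2(Z_{p_2})$; if these sets are empty we are in the first alternative, and otherwise $\sigma:=\ell|_{r_1(Z_{p_1})}$ is the required increasing bijection, since $p_2(H(\nu_a))=\ell(p_1(\nu_a))=\sigma(p_1(\nu_a))$. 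When $r_1(Z_{p_1})$ is a singleton I take any $m\notin r_1(Z_{p_1})$, a component $\xi$ of $p_1^{-1}(m)$ and $\overline\sigma:=\ell|_{r_1(Z_{p_1})\cup\{m\}}$, so that $\overline\sigma(m)=\ell(m)=p_2(H(\xi))$.

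The sufficiency is the substantial direction; the plan is to promote the foliation equivalence $h$ of \ref{cc1} to a function equivalence by re-choosing it on ordinary leaves only. First I exploit its rigidity on separatrices. Since $Z_{p_1}$ is finite there are no limit separatrices (Corollary \ref{92345}), so by Proposition \ref{prop-principal} every separatrix is a vertical line $\nu_a$ or one of its inseparable partner graphs. Inseparability organises these leaves into a disjoint union of paths that alternate between partner graphs and vertical lines, each $\nu_a$ having degree two (one partner in each adjacent strip) while the path endpoints are partner graphs; hence an isomorphism of foliations must respect this parity and carry vertical lines to vertical lines. Consequently $Z_{p_1}$ and $Z_{p_2}$ have the same cardinality, $h$ maps each open strip $(a_i,a_{i+1})\times\R$ onto an open strip of $\mathscr F(p_2)$, and by \ref{cc2} the leaf $h(\nu_{a_i})$ is a vertical line at level $\sigma(r_1(a_i))$, while each partner graph of $\nu_{a_i}$ (also at level $r_1(a_i)$) is sent to a partner graph at level $\sigma(r_1(a_i))$. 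Thus on separatrices the map that $h$ induces on levels is exactly $\sigma$.

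On a strip the leaves are the graphs of $N_c$, linearly ordered by $c$, and $h$ sends them onto the ordered family of the image strip, inducing a monotone homeomorphism $\ell_i$ of levels; the crux is to prove that every $\ell_i$ is increasing. For a strip of configuration \ref{wb} this is immediate: its two bounding partner graphs lie at the distinct levels $r_1(a_i)\neq r_1(a_{i+1})$ and are sent to $\sigma(r_1(a_i)),\sigma(r_1(a_{i+1}))$, so $\ell_i$ matches two levels in increasing order and is increasing. To pass to the remaining strips I will show that adjacent strips carry the same direction: the two cyclic triples flanking a vertical line $\nu_{a_i}$ (Lemma \ref{prop-comportamento}) have equal or opposite signs according as $s_1$ changes sign across $a_i$ or not, that is, according to the parity of the order of the zero of $s_1$ at $a_i$; as $h$ preserves cyclic signs this parity agrees with that of $s_2$ at $h(\nu_{a_i})$, which forces $\ell_{i-1}$ and $\ell_i$ to have the same monotonicity. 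Hence all $\ell_i$ share one direction. If $r_1(Z_{p_1})$ is not a singleton, some pair of consecutive vertical lines carries different levels, producing a configuration \ref{wb} strip and pinning the common direction to ``increasing''; if it is a singleton the supplementary data $m,\xi,\overline\sigma$ furnish the missing second level and play the same role. This monotonicity step is the main obstacle: it is precisely the point at which the increasing character of $\sigma$, the signs of the cyclic triples and the parity of the zeros of $s_i$ must be reconciled, and it is what the counterexample $x+x^3y$ versus its negative (Example \ref{dd45}) shows cannot be dispensed with in the singleton case.

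With every $\ell_i$ increasing I extend $\sigma$ to an increasing homeomorphism $\ell:\R\to\R$ and rebuild the equivalence: I retain $h$ on the separatrices and, inside each canonical region, replace it by the order-preserving bijection onto the image region carrying the $p_1$-level-$c$ leaf to the $p_2$-level-$\ell(c)$ leaf (well defined because on the separatrices bounding the region $\ell$ already coincides with $\sigma$). By Lemma \ref{canonicas} this re-choice changes no chordal relation, so it still defines an isomorphism of foliations; Theorem \ref{KK} realises it by an o-equivalence homeomorphism $H$, and by construction $p_2\circ H=\ell\circ p_1$. Therefore $p_1$ and $p_2$ are o-topologically equivalent, completing the plan.
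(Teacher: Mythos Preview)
Your overall plan coincides with the paper's: keep $h$ on separatrices, redefine it on ordinary leaves so that levels match an increasing extension $\ell$ of $\sigma$, check that the resulting bijection of leaves is still an isomorphism via Lemma~\ref{canonicas}, and then invoke Theorem~\ref{KK}. The necessity and the empty case are fine.

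The genuine gap is in your monotonicity propagation. You correctly observe that the pair of cyclic-triple signs flanking $\nu_{a_i}$ encodes the parity of the zero of $s_1$ at $a_i$ (in fact equal signs correspond to $s_1$ \emph{not} changing sign, the reverse of what you wrote), and that $h$ therefore matches this parity on the $p_2$ side. But the step ``which forces $\ell_{i-1}$ and $\ell_i$ to have the same monotonicity'' is a non-sequitur. What governs how the level order propagates across $\nu_{a_i}$ is the sign of $r_1'(a_i)$: at any point of $\nu_{a_i}$ the gradient of $p_1$ is $(r_1'(a_i),0)$, so $p_1$ increases or decreases as one crosses $\nu_{a_i}$ according to that sign, and this is what links the ``below $c$ / above $c$'' intervals of the two adjacent canonical regions. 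The cyclic-triple signs you use depend on \emph{both} $\operatorname{sign} r_1'(a_i)$ and $\operatorname{sign} s_1$ on each side, and comparing the two sides cancels the $r_1'$ contribution, leaving only the parity of $s_1$; that parity says nothing about the level direction. The paper's Step~10 does exactly the missing work: starting from a strip where the direction is already known, it looks at a small ball $B$ straddling $\nu_{a_{j_0}}$, uses the known direction on $B^-$ to deduce $p_2(h(a))<p_2(h(b))<p_2(h(c))$ for $a\in B^-$, $b\in\nu_{a_{j_0}}$, $c\in B^+$, and thereby produces two leaves $\overline\alpha,\overline\beta$ in the next strip with $p_1(\overline\alpha)<p_1(\overline\beta)$ and $p_2(h(\overline\alpha))<p_2(h(\overline\beta))$, pinning $\ell_i$ to be increasing. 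Without this, your very definition of $g$ on a canonical region $C$ (``send the level-$c$ leaf to the level-$\ell(c)$ leaf in $h(C)$'') can fail to land in $h(C)$: if $\ell_i$ were decreasing, the level interval of $h(C)$ would lie on the wrong side of $\sigma(c)$.

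A smaller point: your inseparability-graph parity argument for ``verticals go to verticals'' is attractive, but it is not complete as stated. In configuration~\ref{wc} the single partner graph in the strip is inseparable to \emph{both} bounding vertical lines, and the two vertical lines may themselves be inseparable, so the inseparability graph is not in general a disjoint union of paths with degree-one endpoints. The paper's Step~1 instead uses assumption~\ref{cc2} inductively (together with the catalogue of configurations~\ref{wa}--\ref{wc}) to force $h(\nu_{a_{j}^1})=\nu_{a_{j}^2}$ one vertical line at a time.
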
 
\begin{proof}
(Necessity) 
Assuming the existence of preserving orientation homeomorphisms $\ell: \R \to \R$ and $h: \R^2\to \R^2$ such that $\ell \circ p_1 = p_2 \circ h$, it follows that $h$ is an o-equivalence homeomorphism between $\mathscr{F}(p_1)$ and $\mathscr{F}(p_2)$, proving \ref{cc1}. 
Moreover, by Lemma \ref{prop-ida-1} and Theorem \ref{bifurcation}, it follows that $\sigma = \ell|_{r_1(Z_{p_1})}$ (resp. $\overline{\sigma} = \ell|_{r_1(Z_{p_1})\cup \{m\}}$ for any $m \notin {r_1(Z_{p_1})}$) defines an increasing bijection between $r_1\left(Z_{p_1}\right)$ and $r_2\left(Z_{p_2}\right)$ (resp. between $r_1\left(Z_{p_1}\right)\cup \{m\}$ and $r_2\left(Z_{p_2}\right)\cup\{\overline{\sigma}(m)\}$), and so the formula of \ref{cc2} follows from the identity $\ell \circ p_1 = p_2 \circ h$.

(Sufficiency) 
If $Z_{p_1} = Z_{p_2} = \emptyset$, we let $\tau_i$ be the constant signal of $s_i(x)$, and $h_i: \R^2 \to \R^2$ be the orientation preserving homeomorphism defined by $h_i(x,y) = \big(x, \tau_i r_i(x) + \tau_i s_i(x) y\big)$, with inverse $h_i^{-1}(x,y) = \big(x, (y - \tau_i r_i(x))/(\tau_i s_i(x))\big)$, and observe that $p_i = \tau_i q\circ h_i$, $i=1,2$, where $q$ is the projection $q(x,y) = y$. 
Hence $p_i$ is o-topologically equivalent to the function $\tau_i q$, $i=1,2$. 
As $q$ and $-q$ are o-topologically equivalent it follows that $p_1$ and $p_2$ are o-topologically equivalent. 
(We can as well invoke Sharko and Soroka \cite{sharko-15} already mentioned result to guarantee that $p_1$ and $p_2$ are both o-topologically equivalent to a projection.) 

So we assume $Z_{p_i}\neq \emptyset$, $i=1,2$ (we can not have just one of them empty, because of condition \ref{cc2} and Theorem \ref{bifurcation}) and write 
$$
Z_{p_i} = \{a_1^i, a_2^i, \ldots, a_{k_i}^i\}, 
$$ 
with $a_{j}^i < a_{j+1}^i$, $j=1,\ldots,k_i-1$, $i = 1,2$. 
We first notice that being an equivalence homeomorphism between $\mathscr{F}(p_1)$ and $\mathscr{F}(p_2)$, $h$ must carry the region $(-\infty, a_1^1]\times \R$ onto $(-\infty, a_1^2]\times \R$ or onto $[a_{k_2}^2,\infty) \times \R$, in the light of configurations \ref{wa}, \ref{wb} and \ref{wc} of Section \ref{section:linear}. 
So up to considering the function $\widetilde{p}_2(x,y) = p_2(-x,-y)$, which is o-topologically equivalent to $p_2$, in the place of $p_2$, we can assume that $h$ carries $(-\infty, a_1^1]\times \R$ onto $(-\infty, a_1^2]\times \R$. 
In particular $h(\nu_{a_1^1}) = \nu_{a_1^2}$. 
In order to organize the remaining of the proof, we divide it in 12 steps. 

\emph{Step 1:} \emph{We claim that $k_1 = k_2$, i.e., $Z_{p_1}$ and $Z_{p_2}$ have the same cardinality, and that $h(\nu_{a_j^1}) = \nu_{a_j^2}$, $j=1,\ldots, k$, with $k=k_1=k_2$. 
In particular, $h$ sends the region $(\nu_{a_j^1}, \nu_{a_{j+1}^1})$ onto  $(\nu_{a_j^2}, \nu_{a_{j+1}^2})$, $j=0,\ldots, k$, where $a_0^i = -\infty$ and $a_k^i = \infty$, $i=1,2$}. 

If $k_1=1$, then the open region $(a_1^2, \infty)\times \R = h\left((a_1^1, \infty) \times \R\right)$ has exactly one separatrix, and so $k_2 = k_1 = 1$, proving Step 1 in this case. 
So we assume that $k_1,k_2 >1$ and that $h(\nu_{a_l^1}) = \nu_{a_l^2}$ for $l=1,\ldots, j < k_1$. 
It is enough to prove that $h(\nu_{a_{j+1}^1}) = \nu_{a_{j+1}^2}$. 
Indeed, the closed region $[\nu_{a_j^1}, \nu_{a_{j+1}^1}]$ is mapped by $h$ onto the closed region $[\nu_{a_j^2}, h(\nu_{a_{j+1}^1})]$, contained in $\{x\ |\ x\geq a_j^2\} \times \R$. 
According to the configurations \ref{wc} and \ref{wb} studied in Section \ref{section:linear}, besides $\nu_{a_j^1}$ and $\nu_{a_{j+1}^1}$, the region $[\nu_{a_j^1}, \nu_{a_{j+1}^1}]$ contains either one or two more separatrices of $\mathscr{F}(p_1)$ depending whether $\nu_{a_j^1}$ and $\nu_{a_{j+1}^i}$ are or not in the same level set of $p_1$, respectively. 
In the first case, being $\alpha$ the existing extra separatrix, it follows by the configuration \ref{wc} that $\alpha$, $\nu_{a_j^1}$ and $\nu_{a_{j+1}^1}$ are in the same level set of $p_1$. 
So by assumption \ref{cc2} it follows that $\nu_{a_j^2}$, $h(\alpha)$ and $h(\nu_{a_{j+1}^1})$ contained in $[\nu_{a_j^2}, h(\nu_{a_{j+1}^1})]$ are in the same level set of $p_2$. 
Since these three leaves are the only separatrices of $\mathscr{F}(p_2)$ contained in this set, we can conclude from a glance in configurations \ref{wa}, \ref{wb} and \ref{wc} that we must have $h(\nu_{a_{j+1}^1}) = \nu_{a_{j+1}^2}$. 
Analogously, assuming the second case, we have from configuration \ref{wb} that one of the separatrices in $(\nu_{a_j^1}, \nu_{a_{j+1}^1})$, say $\alpha$, is such that $p_1(\nu_{a_j^1}) = p_1(\alpha)$, and the other, say $\beta$, satisfies $p_1(\beta) = p_1 (\nu_{a_{j+1}^1})$, with $p_1(\alpha) \neq p_1(\beta)$. 
Hence by assumption \ref{cc2}, it follows that 
\begin{equation}\label{meio}
p_2(\nu_{a_{j}^2}) = p_2\left(h(\alpha)\right) \neq p_2\left(h(\beta)\right) = p_2(h(\nu_{a_{j+1}^1})). 
\end{equation} 
Since these four separatrices of $\mathscr{F}(p_2)$ are the only ones in the region $[\nu_{a_{j}^2}, h(\nu_{a_{j+1}^1})]$, and they satisfy \eqref{meio}, it follows again by a glance in the configurations \ref{wa}, \ref{wb} and \ref{wc} that we must have $h(\nu_{a_{j+1}^1}) = \nu_{a_{j+1}^2}$. 
So Step 1 is proved. 

\emph{Step 2:} \emph{Assumption \ref{cc2} is also valid throughout $\mathfrak{S}_{\mathscr{F}(p_1)}$, i.e., $\sigma \left(p_1(\gamma)\right) = p_2 \left(h(\gamma)\right)$ for any separatrix $\gamma$ of $\mathscr{F}(p_1)$}. 
Indeed, recall that two leaves of $\mathscr{F}(p_i)$ which are inseparable to each other are in the same level set of $p_i$. 
Further, from Proposition \ref{prop-principal}, any separatrix of $\mathscr{F}(p_i)$ is inseparable to a separatrix of the form $\nu_{a_j^i}$. 
Therefore, since $h$ sends pairs of inseparabel leaves of $\mathscr{F}(p_1)$ onto pairs of inseparable leaves of $\mathscr{F}(p_2)$, Step 2 follows by assumption \ref{cc2}. 

\emph{Step 3:} We define $\ell: \R \to \R$ to be any homeomorphism such that $\ell|_{r_1(Z_{p_1})} = \sigma$ or $\ell|_{r_1(Z_{p_1})\cup \{m\}} = \overline{\sigma}$, in case $r_1(Z_{p_1})$ is a singleton. 
Since $\sigma$ (or $\overline{\sigma}$) is increasing, it follows that $\ell$ is orientation-preserving. 

\emph{Step 4:} In order to finish the proof, it is enough to construct an isomorphism $g: \mathscr{F}(p_1) \to \mathscr{F}(p_2)$ such that $\ell \left(p_1 (\gamma)\right) = p_2\left(g(\gamma) \right)$ for each $\gamma \in \mathscr{F}(p_1)$. 
Because with this isomorphism in hands it follows by Theorem \ref{KK} that there exists an o-equivalence homeomorphism $H: \R^2 \to \R^2$ between ${\mathscr{F}(p_1)}$ and ${\mathscr{F}(p_2)}$ such that $H(\gamma) = g(\gamma)$ for each $\gamma \in \mathscr{F}(p_1)$. 
So for any $x\in \R^2$, by denoting $\gamma_x$ the leaf of $\mathscr{F}(p_1)$ containing $x$, it follows that 
$$
\ell (p_1(x)) = \ell (p_1(\gamma_x)) = p_2(g(\gamma_x)) = p_2(H(\gamma_x)) = p_2(H(x)), 
$$
hence $p_1$ and $p_2$ are o-topologically equivalent. 

In the following steps, we will construct the isomorphism $g$. 

\emph{Step 5:} We first define 
$$
g(\gamma) = h(\gamma)
$$
for any $\gamma$ in the set of separatrices of $\mathscr{F}(p_1)$, and also for $\gamma = \xi$ when $r_1\left(Z_{p_1}\right)$ is a singleton. 
By Step 2, it follows that $\ell (p_1(\gamma)) = p_2(g(\gamma))$. 

\emph{Step 6:} Here for each $j = 0, \ldots, k$, we define a bijection $g_j$ from the set of leaves $\mathscr{F}(p_1)$ foliating the strip region $(a_{j}^1, a_{j+1}^1) \times \R$ onto the set of leaves of $\mathscr{F}(p_2)$ foliating $(a_{j}^2, a_{j+1}^2) \times \R$ (see Step 1) as follows: 

We let $\alpha^1(t)$ be a transversal curve in $(a_{j}^1, a_{j+1}^1) \times \R$ parametrized by the level sets of $p_1$, i.e., such that $p_1(\alpha^1(t)) = t$ (for instance, for any $x_0 \in (a_{j}^1, a_{j+1}^1)$, we can take $\alpha^1(t) = \big(x_0, N_t(x_0)\big)$, where $N_t(x)$ is defined in Lemma \ref{submersion} for function $p_1$). 
We define $\alpha^2(t)$ having the same meaning in $(a_{j}^2, a_{j+1}^2) \times \R$ for function $p_2$. 

Each leaf $\gamma^i$ of $\mathscr{F}(p_i)$ foliating $(a_{j}^j, a_{j+1}^i) \times \R$ can be uniquely denoted by $\gamma^i_t$ meaning the only leaf of $\mathscr{F}(p_i)$ crossing the transversal $\alpha^i$ in the point $\alpha^i(t)$. 
So we define 
$$
g_j(\gamma^1_t) = \gamma^2_{\ell(t)}, 
$$
for each $t \in \R$. 
This is a bijection between the leaves of $\mathscr{F}(p_1)$ foliating $(a_{j}^1, a_{j+1}^1) \times \R$ onto the leaves of $\mathscr{F}(p_2)$ foliating $(a_{j}^2, a_{j+1}^2) \times \R$. 
By assumption \ref{cc2}, $g_j(\gamma) = h(\gamma)$ for any separatrix $\gamma$ in $(a_{j}^1, a_{j+1}^1) \times \R$, as well as if $\gamma$ is $\xi$ (if it is the case), hence $g_j$ agrees with $g$ defined in Step 5 in these cases. 
By construction, $g_j$ satisfies $\ell \circ p_1 = p_2\circ g_j$. 

\emph{Step 7:} We then define $g: \mathscr{F}(p_1) \to \mathscr{F}(p_2)$ by extending the bijection defined in Step 5 to the entire $\mathscr{F}(p_1)$ as 
$$
g(\gamma) = g_j(\gamma) 
$$ 
if $\gamma$ is contained in the strip $(a_{j}^1, a_{j+1}^1)\times \R$, $j=0, \ldots, k$, according to Step 6. 
By construction it follows that $g$ is a well defined bijection and also that it satisfies $\ell\circ p_1 = p_2\circ g$. 

It remains to prove that $g$ is an isomorphism. 
I.e., given $\gamma_1, \gamma_2, \gamma_3 \in \mathscr{F}(p_1)$, we have to show that the chordal relations of them are the same than the chordal relations of $g(\gamma_1), g(\gamma_2), g(\gamma_3) \in \mathscr{F}(p_2)$. 

\emph{Step 8:} If $\gamma_1 = \gamma_{t_1}, \gamma_2 = \gamma_{t_2}, \gamma_3 = \gamma_{t_3}$ are contained in the same strip region $(a_{j}^1, a_{j+1}^1) \times \R$ with $t_1<t_2<t_3$, we have $\gamma_{t_1}|\gamma_{t_2}|\gamma_{t_3}$. 
The definition of $g$ in Step 7 proves that $g(\gamma_{t_1}) | g(\gamma_{t_2}) | g(\gamma_{t_3})$, and so $\gamma_1$, $\gamma_2$ and $\gamma_3$ have the same chordal relations than $g(\gamma_1)$, $g(\gamma_2)$ and $g(\gamma_3)$. 
Also, since $g=h$ in the set of separatrices, and $h$ is an isomorphism, the same happens if $\gamma_1$, $\gamma_2$ and $\gamma_3$ are separatrices. 
Hence \emph{we can assume that $\gamma_1$, $\gamma_2$ and $\gamma_3$ are not in the same strip region and that one of them is not a separatrix}. 

Before to proceed, we need some properties of $h$ in the next three steps. 
With them in hands, we will be able to prove that $g$ preserves chordal relations by using the fact that $h$ preserves chordal relations in Step 12. 

\emph{Step 9:} Since each level set of $p_i$ has exactly one connected component in $(a_{j}^i, a_{j+1}^i) \times \R$, it follows that \emph{$\gamma_1|\gamma_2|\gamma_3$ is equivalent to $p_i(\gamma_1) < p_i(\gamma_2) < p_i(\gamma_3)$ or to $p_i(\gamma_3) < p_i(\gamma_2) < p_i(\gamma_1)$ for leaves $\gamma_1,\gamma_2, \gamma_3 \in \mathscr{F}(p_i)$ in the same strip region $(a_{j}^i, a_{j+1}^i) \times \R$}. 

\emph{Step 10:} \emph{Given $\gamma_1, \gamma_2 \in \mathscr{F}(p_1)$ in the same strip region $(a_{j}^1, a_{j+1}^1) \times \R$ such that $p_1(\gamma_1) < p_1(\gamma_2)$, then $p_2(h(\gamma_1)) < p_2(h(\gamma_2))$}. 
Indeed, we first assume that in the strip region $(a_{j}^1, a_{j+1}^1) \times \R$ there are leaves $\alpha, \beta$ both of them separatrices or one separatrix and the other being $\xi$ (assumption \ref{cc2} guarantees the existence of at least one  strip region like this). 
So we have, say, $p_1(\alpha) < p_1(\beta)$, and it follows that $p_2(h(\alpha)) = \ell\circ p_1(\alpha) < \ell\circ p_1(\beta) = p_2(h(\beta))$, as $\ell$ is increasing. 
Therefore, since $h$ preserves chordal relations, it follows by Step 9 that for a given leaf $\gamma$ in this strip region: $p_1(\gamma)<p_1(\alpha)$, or $p_1(\alpha) < p_1(\gamma) <p_1(\beta)$ or $p_1(\beta) <p_1(\gamma)$ implies respectively that $p_2(h(\gamma)) < p_2(h(\alpha)) < p_2(h(\beta))$, or $p_2(h(\alpha)) < p_2(h(\gamma)) < p_2(h(\beta))$ or $p_2(h(\beta))<p_2(h(\gamma))$. 
So we conclude, using once more Step 9, that if $p_1(\gamma_1) < p_1(\gamma_2)$, then $p_2(h(\gamma_1)) < p_2(h(\gamma_2))$ for any leaves $\gamma_1, \gamma_2$ in this strip region. 
I.e., Step 10 is true in a strip region containing leaves $\alpha$ and $\beta$ as above. 

Now, in order to prove Step 10 for any strip region, it is enough to conclude that given a strip region containing $\alpha$ and $\beta$ as above, we are able to construct $\overline{\alpha}$ and $\overline{\beta}$ in both the \emph{adjacent} strip regions (when it is the case) such that $p_1(\overline{\alpha})<p_1(\overline{\beta})$ and $p_2(h(\overline{\alpha}) < p_2(\overline{\beta})$. 
Because then we apply the same reason than in the former case in order to prove that $p_1(\gamma_1)<p_1(\gamma_2)$ implies $p_2(h(\gamma_1)) < p_2(h(\gamma_2))$ for $\gamma_1, \gamma_2$ in this strip. 
So we proceed to construct $\overline{\alpha}$ and $\overline{\beta}$ in the region $(a_{j_0}^1, a_{j_0+1}^1)\times \R$ by assuming the existence of $\alpha$ and $\beta$ in the region $(a_{j_0-1}^1,a_{j_0}^1)\times \R$. 
The other situation is analogous. 
Centered at a point $b \in \nu_{a_{j_0}^1}$ there is an open ball $B$ such that 
$$
B^- = B\cap \left( (a_{j_0-1}^1, a_{j_0}^1)\times \R \right), \ \ \ \  B^+ = B \cap \left( (a_{j_0}^1, a_{j_0+1}^1)\times \R \right) 
$$ 
are contained in canonical regions of $\mathscr{F}(p_1)$. 
For any $a\in B^-$ and $c\in B^+$, it follows that the signal of $p_1(c)-p_1(a)$ is constant. 
We assume without loss of generality that $p_1(a) < p_1(b) <p_1(c)$ for every $a\in B^-$ and for every $c\in B^+$. 
We have $p_1(\alpha) = p_1(b)$ or $p_1(\beta) = p_1(b)$. 
We assume $p_1(\alpha) = p_1(b)$. 
So by taking $\gamma_a$ the leaf containing an $a\in B^-$ it follows that 
$p_1(\gamma_a) < p_1(\alpha)$ and so $p_2(h(\gamma_a)) < p_2(h(\alpha)) = p_2(h(b))$. 
With this we conclude that 
$$
p_2(h(a)) < p_2(h(b)) < p_2(h(c))
$$ 
for any $a\in B^-$ and $c \in B^+$, because analogously the signal of $p_2(h(c))-p_2(h(a))$ is constant. 
We fix $c_0\in B^+$ and take $\overline{\beta} = \gamma_{c_0}$, the leaf of $\mathscr{F}(p_1)$ containing $c_0$, and $\overline{\alpha}$ to be the separatrix contained in $(a_{j_0}^1, a_{j_0+1}^1)\times \R$ which is inseparable with $\gamma_{a_{j_0}^1}$. 
By construction we have $p_1(\overline{\alpha}) = p_1(b) < p_1(\overline{\beta})$ and $p_2(h(\overline{\alpha})) = p_2(h(b)) < p_2(h(c_0)) = p_2(h(\overline{\beta})$, finishing the proof of Step 10. 

\emph{Step 11:} \emph{For any ordinary leaf $\gamma \in \mathscr{F}(p_1)$, the leaves $h(\gamma)$ and $g(\gamma)$ are in the same canonical region}. 
Indeed, if $\alpha$ is a separatrix contained in the same strip region $(a_{j}^1, a_{j+1}^1) \times \R$ than $\gamma$, we have $p_1(\gamma) < p_1(\alpha)$ or $p_1(\gamma) > p_1(\alpha)$. 
From Step 10 it follows that $p_2(h(\gamma)) < p_2(h(\alpha))$ or $p_2(h(\gamma)) > p_2(h(\alpha))$, respectively. 
Since $p_2(g(\gamma)) < p_2 (g(\alpha))$ or $p_2(g(\gamma)) > p_2 (g(\alpha))$, respectively, we are done. 

\emph{Step 12:} \emph{The bijection $g$ is an isomorphism}. 
According to Step 8, it remains to analyze two possibilities for $\gamma_1, \gamma_2, \gamma_3$: (i) $\gamma_1$ is an ordinary leaf in a canonical region containing neither $\gamma_2$ nor $\gamma_3$ and $\gamma_2$ and $\gamma_3$ are not contained in the same canonical region, and (ii) $\gamma_1$ and $\gamma_2$ are ordinary leaves in the same canonical region not containing $\gamma_3$. 
For both cases, we will show that the chordal relations of the triples $g(\gamma_1), g(\gamma_2), g(\gamma_3)$ and $h(\gamma_1), h(\gamma_2), h(\gamma_3)$ are the same. 
This will conclude that $g$ is an isomorphism, because $h$ is an isomorphism. 

In case (i), it follows from Step 11 that $g(\gamma_1)$ and $h(\gamma_1)$ are in the same canonical region. 
Analogously, we have that $g(\gamma_i)$ and $h(\gamma_i)$ are in the same canonical region if $\gamma_i$ is an ordinary leaf, for $i=2,3$. 
Since $\gamma_2$ and $\gamma_3$ are not both ordinary leafs contained in the \emph{same} canonical region, it then follows by applying (possible more than once) Lemma \ref{canonicas} together with the fact that $g=h$ in the separatrices that $g(\gamma_1)$, $g(\gamma_2)$, $g(\gamma_3)$ have the same chordal relations than $h(\gamma_1)$, $h(\gamma_2)$, $h(\gamma_3)$. 

In case (ii), by \ref{tyu} of Lemma \ref{canonicas}, we can assume $\gamma_1|\gamma_2|\gamma_3$. 
It is enough to prove that $g(\gamma_1) | g(\gamma_2) | h(\gamma_3)$, because $h(\gamma_3) = g(\gamma_3)$ in case $\gamma_3$ is a separatrix, or $h(\gamma_3)$ and $g(\gamma_3)$ are in the same canonical region by Step 11 otherwise, and then \ref{tyu2} of Lemma \ref{canonicas} applies. 
We suppose on the contrary that this is not true. 
By \ref{tyu} of Lemma \ref{canonicas} again, we have $g(\gamma_2) | g(\gamma_1) | h(\gamma_3)$. 

Up to multiplying $p_1$ and $p_2$ by $-1$ we can assume that $p_1(\gamma_1)<p_1(\gamma_2)$, hence $p_2(g(\gamma_1))<p_2(g(\gamma_2))$ and, by Step 10, $p_2(h(\gamma_1))<p_2(h(\gamma_2))$. 
By analyzing the possible positions of $g(\gamma_i)$ and $h(\gamma_i)$, i.e., the signals of $p_2(g(\gamma_i))-p_2(h(\gamma_j))$, $i,j=1,2$, we see that in order to not contradict $h(\gamma_1)|h(\gamma_2)|h(\gamma_3)$ (because $h$ is an isomorphism) and $g(\gamma_2)|g(\gamma_1)|h(\gamma_3)$ we must have $p_2(h(\gamma_2))<p_2(g(\gamma_1))$. 
But then we conclude that $h(\gamma_3)$ is contained in $(h(\gamma_2), g(\gamma_1))$, which is an open connected set contained in the same canonical region containing $g(\gamma_i), h(\gamma_i)$, $i=1,2$, a contradiction with our assumption (ii). 
This finishes the proof. 
\end{proof}

\begin{corollary}\label{cor5.1}
Let $p_i(x,y) = r_i(x) + s_i(x) y$, $i=1,2$, be two finite linear-like submersions. 
Then $p_1$ is topologically equivalent to $p_2$ if and only if $Z_{p_1} = Z_{p_2} = \emptyset$ or each of the following conditions hold:
\begin{enumerate}[label={\textnormal{(\alph*)}}]
\item\label{ccc1} $\mathscr{F}(p_1)$ and $\mathscr{F}(p_2)$ are topologically equivalent. 
\item\label{ccc2} There exists a monotone bijection $\sigma: r_1(Z_{p_1})\to r_2(Z_{p_2})$ such that 
$$
\sigma(p_1(\nu_a)) = p_2(h(\nu_a)),\ \ \ \ a \in Z_{p_1}, 
$$
where $h$ is an equivalence homeomorphism between $\mathscr{F}(p_1)$ and $\mathscr{F}(p_2)$. 
\end{enumerate}
\end{corollary}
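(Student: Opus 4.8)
The plan is to deduce the corollary from Theorem \ref{mmain} by absorbing the two extra degrees of freedom present in topological (as opposed to o-topological) equivalence — reversal of orientation in the domain and in the target — into suitable reflections of $p_2$. Concretely, write $\rho(x,y)=(-x,y)$, an orientation-reversing homeomorphism of $\R^2$, and consider the four finite linear-like submersions $p_2$, $-p_2$, $p_2\circ\rho$, $-p_2\circ\rho$. A routine check shows that $p_1$ is topologically equivalent to $p_2$ if and only if $p_1$ is o-topologically equivalent to at least one of these four: given $\ell\circ p_1=p_2\circ h$ with $h,\ell$ homeomorphisms, replacing $h$ by $\rho\circ h$ when $h$ reverses orientation and $p_2$ by $-p_2$ when $\ell$ is decreasing produces an o-topological equivalence onto one of the four functions, and the converse is immediate. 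I will first record how each reflection acts on the data of the corollary: replacing $p_2$ by $-p_2$ fixes $Z_{p_2}$ and $\mathscr{F}(p_2)$, sends $r_2(Z_{p_2})$ to $-r_2(Z_{p_2})$, keeps the same equivalence homeomorphism $h$, and turns $\sigma$ into $-\sigma$, reversing its monotonicity; replacing $p_2$ by $p_2\circ\rho$ fixes $r_2(Z_{p_2})$ and the monotonicity of $\sigma$, replaces $h$ by $\rho\circ h$, and converts an isomorphism of separatrix configurations into an anti-isomorphism and conversely, since $\rho$ reverses the orientation of every cyclic triple.

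For necessity, suppose $\ell\circ p_1=p_2\circ h$. Then $h$ is an equivalence homeomorphism between $\mathscr{F}(p_1)$ and $\mathscr{F}(p_2)$, giving \ref{ccc1}. By Lemma \ref{prop-ida-1} together with Theorem \ref{bifurcation}, $\ell$ restricts to a monotone bijection $\sigma:=\ell|_{r_1(Z_{p_1})}\colon r_1(Z_{p_1})\to r_2(Z_{p_2})$; evaluating $\ell\circ p_1=p_2\circ h$ on the vertical leaf $\nu_a$, $a\in Z_{p_1}$, on which $p_1$ is constant, yields $\sigma(p_1(\nu_a))=p_2(h(\nu_a))$, which is \ref{ccc2}. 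The alternative $Z_{p_1}=Z_{p_2}=\emptyset$ follows since $Z_{p_i}=\emptyset$ is equivalent to $B(p_i)=\emptyset$ and $B(p_2)=\ell(B(p_1))$, so the two zero sets are simultaneously empty.

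For sufficiency I assume \ref{ccc1} and \ref{ccc2} with $Z_{p_i}\neq\emptyset$, the empty case being immediate from Theorem \ref{mmain}. By \ref{ccc1} and Theorem \ref{teo-markus} the separatrix configurations of $\mathscr{F}(p_1)$ and $\mathscr{F}(p_2)$ are isomorphic or anti-isomorphic; applying the $\rho$-reflection in the anti-isomorphic case, I may assume they are isomorphic, so that $\mathscr{F}(p_1)$ and $\mathscr{F}(p_2)$ are o-topologically equivalent. When $r_1(Z_{p_1})$ has at least two elements, $\sigma$ is genuinely monotone, and applying the negation $p_2\mapsto -p_2$ in the decreasing case I may assume $\sigma$ is increasing; then conditions \ref{cc1} and \ref{cc2} of Theorem \ref{mmain} hold verbatim, the singleton clause of \ref{cc2} being vacuous, and Theorem \ref{mmain} gives the o-topological equivalence of $p_1$ with the reflected function, hence the topological equivalence of $p_1$ and $p_2$.

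The remaining case, $r_1(Z_{p_1})$ a singleton, is where topological and o-topological classification part ways, as Example \ref{dd45} shows, and where the corollary drops the extra point required in Theorem \ref{mmain}; I expect the bookkeeping here to be the main obstacle. The key observation is that for a singleton the bijection $\sigma$ is simultaneously increasing and decreasing, so the negation freedom $p_2\mapsto -p_2$ has not been spent and remains available to satisfy the extra clause of Theorem \ref{mmain}. Writing $r_1(Z_{p_1})=\{r_1(a)\}$ and $r_2(Z_{p_2})=\{r_2(b)\}$ after the $\rho$-reduction, the o-equivalence $h$ sends $\nu_a$ onto $\nu_b$, so the first part of \ref{cc2} holds automatically. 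Fixing any $m>r_1(a)$, each of the two connected components $\xi$ of $p_1^{-1}(m)$ is an ordinary leaf, whence $p_2(h(\xi))\neq r_2(b)$; if some component satisfies $p_2(h(\xi))>r_2(b)$ the extra clause of \ref{cc2} holds for $p_2$ with an increasing extension $\overline{\sigma}$, while if both satisfy $p_2(h(\xi))<r_2(b)$ the same clause holds for $-p_2$, for which the roles of $>$ and $<$ are interchanged. In either case Theorem \ref{mmain} applies to $p_1$ and one of $p_2,-p_2$, yielding the o-topological equivalence of $p_1$ with that function and hence the topological equivalence of $p_1$ and $p_2$, completing the proof through the reflection dictionary set up at the start.
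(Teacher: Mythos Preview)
Your proof is correct and follows essentially the same route as the paper's: reduce to Theorem \ref{mmain} by absorbing the orientation ambiguities into reflections of $p_2$ in the domain and in the target, treating the empty, multi-element, and singleton cases of $r_1(Z_{p_1})$ separately. The paper uses the vertical reflection $(x,y)\mapsto(x,\tau y)$ rather than your $\rho(x,y)=(-x,y)$, and in the singleton case simply picks one ordinary leaf $\xi$ and chooses the sign $\kappa$ accordingly, avoiding your dichotomy over components; note also that $p_1^{-1}(m)$ may have more than two components when $|Z_{p_1}|>1$ with $r_1(Z_{p_1})$ still a singleton, but this imprecision is harmless since only one $\xi$ is needed.
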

\begin{proof} 
(Necessity) If there are homeomorphisms $h: \R^2 \to \R^2$ and $\ell: \R \to \R$ such that $\ell \circ p_1 = p_2 \circ h$, it follows that $h$ is an equivalence homeomorphism between $\mathscr{F}(p_1)$ and $\mathscr{F}(p_2)$, proving \ref{ccc1}. 
If $Z_{p_1} = Z_{p_2} = \emptyset$ there is nothing to do. 
On the other hand, taking $\sigma= \ell|_{r_1(Z_{p_1})}$, and considering Lemma \ref{prop-ida-1} and Theorem \ref{bifurcation}, it follows that $\sigma$ satisfies \ref{ccc2}. 

(Sufficiency) We take $p_3(x,y) = p_2(x,\tau y)$ with $\tau = 1$ or $-1$ if $h$ preserves or reverses orientation, respectively. 
In any case $p_3$ is topologically equivalent to $p_2$, with $Z_{p_3} = Z_{p_2}$ and $r_3(Z_{p_3}) = r_2(Z_{p_2})$. 

If $Z_{p_1}$ is empty, it follows by Theorem \ref{mmain} that $p_1$ and $p_3$ are o-topologically equivalent. 
If $r_1(Z_{p_1})$ has at least two elements, we take $\kappa = \pm 1$ such that $\kappa \sigma: r_1(Z_{p_1}) \to \kappa r_3 (Z_{p_3})$ is increasing. 
Then again by Theorem \ref{mmain} it follows that $p_1$ and $\kappa p_3$, which is topologically equivalent to $p_3$, are o-topologically equivalent. 

So we assume that $r_1(Z_{p_1}) = \{c\}$, where $c = r_1(a) = p_1(\nu_a)$ for any $a\in Z_{p_1}$. 
We take any ordinary leaf $\xi \in \mathscr{F}(p_1)$ such that $m = p_1(\xi) > c$ and let $\kappa = \pm 1$ be such that $\kappa p_3(h(\xi)) > \kappa p_3(h(\nu_a))$. 
Then $\widetilde{\sigma}: \{c,m\} \to \{\kappa \sigma(c), \kappa p_3(h(\xi))\}$, with $\widetilde{\sigma}(c) = \kappa \sigma(c)$ and $\widetilde{\sigma}(m) = \kappa p_3(h(\xi))$, is increasing, and Theorem \ref{mmain} applies once more to conclude that $p_1$ and $\kappa p_3$ are o-topologically equivalent. 
\end{proof}

Theorem \ref{corollary:main} and Corollary \ref{varcorollary:main} are direct consequences of Theorem \ref{mmain} and Corollary \ref{cor5.1}, respectively, after applying Markus Theorem \ref{teo-markus}. 

\begin{proof}[Proof of Theorem \ref{theorem:main}]
In case $B(p_1) = B(p_2) = \emptyset$, the result follows by Theorem \ref{mmain}. 
We assume $B(p_1)\neq \emptyset$. 

For the necessity, if $\ell \circ p = q\circ h$, with $\ell:\R\to\R$ and $h: \R^2 \to \R^2$ orientation preserving homeomorphisms, then $h$ is an o-equivalence homeomorphism between $\mathscr{F}(p)$ and $\mathscr{F}(q)$, proving \ref{c1}. 
Statement \ref{c2} follows trivially by defining $\sigma = \ell|_{B(p)}$ (or $\overline{\sigma}=\ell|_{B(p)\cup\{m\}}$ for any $m\notin B(p_1)$ in case $B(p_1)$ is a singleton) and considering Lemma \ref{prop-ida-1}. 

Now the sufficiency will follow from Theorem \ref{mmain} by showing that assumption \ref{c2} of Theorem \ref{theorem:main} implies assumption \ref{cc2} of Theorem \ref{mmain} with $p_1=p$ and $p_2=q$. 
Indeed, for each $a\in Z_{p_1}$, the leaf $h(\nu_a)$ is a separatrix of $\mathscr{F}(p_2)$ and so by Proposition \ref{prop-principal} and Theorem \ref{bifurcation} it follows that $q(h(\nu_a)) =\{c\}$ is contained in $B(q)$ (actually, as in the proof of Theorem \ref{mmain}, we have that $h(\nu_a)) = \nu_b$ for certain $b\in Z_{p_2}$). 
Then from assumption \ref{c2} of Theorem \ref{theorem:main} it follows that $h^{-1}(q^{-1}(c)) = p^{-1}(\sigma^{-1}(c))$, hence $\nu_a \subset p^{-1}(\sigma^{-1}(c))$, and so $\sigma(p(\nu_a)) = \{c\}$. 
This finishes the proof in case $B(p_1)$ is not a singleton. 
If $B(p_1)$ has only one element, any $\xi \subset p_1^{-1}(m)$, with $m$ given in \ref{c2} of Theorem \ref{theorem:main} will apply to prove \ref{cc2} of Theorem \ref{mmain}. 
\end{proof}
 
\begin{proof}[Proof of Corollary \ref{coro:main}]
It follows similarly as the proof of Corollary \ref{cor5.1} above. 
\end{proof} 

We think that results like theorems \ref{theorem:main} and \ref{corollary:main} as well as their corollaries are true in a wider class of submersion functions, namely the ones where we do not have the vanishing at infinity phenomenon, that already does not appear in our class of functions, according to Corollary \ref{12091}. 
Anyway, our proofs need special regions of the foliation $\mathscr{F}(p)$ where $p$ assumes all the values in $\R$, as the reasons in this section make clear. 
New techniques must be developed in order to control this in the general case. 

\section{Acknowledgements}
We thank Professor Luis Renato Gon\c{c}alves Dias for reading and commenting on a previous version of this article. 
The first named author is partially supported by the grants 2019/07316-0 and 2020/14498-4, S\~ao Paulo Research Foundation (FAPESP). 
The second named author is partially supported by Coordena\c{c}\~ao de Aperfei\c{c}oamento de Pessoal de N\'ivel Superior - Brasil (CAPES) - Finance Code 001.



\begin{thebibliography}{99} 

\bibitem{BT} {\sc A. Bodin and M. Tib\u ar}, 
\emph{Topological equivalence of complex polynomials}. 
Adv. Math. \textbf{199} (2006), 136--150. 

\bibitem{BS}{\sc F. Braun and J.R. dos Santos Filho}, 
\emph{The real Jacobian conjecture on $\R^2$ is true when one of the components has degree $3$}, 
Discrete Contin. Dyn. Syst. \textbf{26} (2010), 75--87.

\bibitem{BST} {\sc F. Braun, J.R. dos Santos Filho and M.A. Teixeira}, 
\emph{Foliations, solvability and global injectivity}, 
arXiv:1603.07543v1

\bibitem{B} {\sc S.A. Broughton}, {\it On the topology of polynomial hypersurfaces, In: Singularities. Part 1}, Arcata,
Calif., 1981, (ed. P. Orlik), Proc. Sympos. Pure Math., \textbf{40}, Amer. Math. Soc., Providence, RI,
1983, 167--178.

\bibitem{K}{\sc W. Kaplan}, 
\emph{Regular curve-families filling the plane, I}, Duke Math. J.
 \textbf{7} (1940), 154--185. 
 
\bibitem{K1}{\sc W. Kaplan}, 
\emph{Regular curve-families filling the plane, II}, Duke Math. J.
\textbf{8} (1940), 11--46. 

\bibitem{markus-54} {\sc L. Markus}, 
\emph{Global structure of ordinary differential equations in the plane}, Trans. Amer. Math. Soc.
\textbf{76} (1954), 127--148.

\bibitem{markus-72} {\sc L. Markus}, 
\emph{Topological types of polynomial differential equations}, Trans. Amer. Math. Soc.
\textbf{171} (1972), 157--178. 

\bibitem{mmo-13}{\sc J. Mart\'{i}nez-Alfaro, I.S. Meza-Sarmiento, R.D.S. Oliveira}, 
\emph{Singular levels and topological invariants of Morse-Bott systems on surfaces}, 
J. Differential Equations 260 (2016), 688--707.

\bibitem{mmo-15} {\sc J. Mart\'{i}nez-Alfaro, I.S. Meza-Sarmiento, R.D.S. Oliveira}, 
\emph{Topological classification of Morse-Bott function on surfaces}, 
(English summary) Real and complex singularities, 165--179, Contemp. Math., 675, Amer. Math. Soc., Providence, RI, 2016. 

\bibitem{Me}{\sc  K.R. Meyer}, 
\emph{Energy functions for Morse Smale systems}, 
Amer. J. Math. \textbf{90} (1968), 1031--1040. 


\bibitem{morse-46} {\sc M. Morse}, 
\emph{ The topology of pseudo-harmonic functions}, Duke Math. J. 13 (1946), 21--42.

\bibitem{morse-47} {\sc M. Morse}, 
\emph{Topological methods in the theory of functions of a complex variable}, Annals of Math.
Studies, No. 15. Princeton Univ. Press, Princeton, N. J., 1947.

\bibitem{sharko-15} {\sc V.V. Sharko and Y.Y. Soroka}, 
\textit{Topological equivalence to a projection}, Methods Funct. Anal. Topology \textbf{21} (2015), 3--5. 

\bibitem{T} {\sc M. Tib\u ar}, 
\emph{Polynomials and vanishing cycles}, 
Cambridge Tracts in Mathematics, \textbf{170}. Cambridge University Press, Cambridge, 2007. 

\bibitem{TZ} {\sc M. Tib\u ar and A. Zaharia}, 
\emph{Asymptotic behaviour of families of real curves}, Manuscripta Math. {\bf 99} (1999), 383--393.

\end{thebibliography}
\end{document}